\renewcommand{\P}{\mathbb{P}}
\renewcommand{\div}{\operatorname{div}}
\DeclareMathOperator{\Hom}{Hom}
\DeclareMathOperator{\spec}{Spec}
\DeclareMathOperator{\gal}{Gal}
\DeclareMathOperator{\Bl}{Bl}
\DeclareMathOperator{\rank}{rank}
\DeclareMathOperator{\mult}{mult}
\DeclareMathOperator{\mon}{mon}
\DeclareMathOperator{\coker}{coker}
\DeclareMathOperator{\ev}{ev}
\DeclareMathOperator{\inv}{inv}
\DeclareMathOperator{\pr}{pr}
\DeclareMathOperator{\Supp}{Supp}
\renewcommand\d{\,\mathrm d}
\DeclareMathOperator{\Pic}{Pic}
\DeclareMathOperator{\Eff}{Eff}
\DeclareMathOperator{\Br}{Br}
\newtheorem{theorem}{Theorem}
\newtheorem{proposition}[theorem]{Proposition}
\newtheorem{lemma}[theorem]{Lemma}
\newtheorem{conjecture}[theorem]{Conjecture}
\theoremstyle{definition}
\newtheorem{definition}[theorem]{Definition}
\newtheorem{example}[theorem]{Example}
\newtheorem{remark}[theorem]{Remark}
\newtheorem{assumption}[theorem]{Assumption}
\newtheorem{notation}[theorem]{Notation}
\numberwithin{theorem}{section}
\numberwithin{equation}{section}
\newcommand{\lineN}{\overline{\mathbb{N}}}
\newcommand{\cA}{\mathcal{A}}
\newcommand{\cC}{\mathcal{C}}
\newcommand{\cO}{\mathcal{O}}
\newcommand{\cD}{\mathcal{D}}
\newcommand{\cU}{\mathcal{U}}
\newcommand{\cM}{\mathcal{M}}
\newcommand{\fM}{\mathfrak{M}}
\newcommand{\cX}{\mathcal{X}}
\newcommand{\cY}{\mathcal{Y}}
\newcommand{\cL}{\mathcal{L}}
\newcommand{\cK}{\mathcal{K}}
\newcommand{\fp}{\mathfrak{p}}
\newcommand{\bfm}{\mathbf{m}}
\newcommand{\bfw}{\mathbf{w}}
\newcommand{\bfx}{\mathbf{x}}
\newcommand{\bfe}{\mathbf{e}}
\newcommand{\fin}{\textnormal{fin}}
\newcommand{\red}{\textnormal{red}}
\DeclareMathOperator{\Div}{Div}
\DeclareMathOperator{\Volume}{Volume}
\newcommand{\lParen}{(\!(}
\newcommand{\rParen}{)\!)}
\DeclareFontFamily{U}{wncy}{}
\DeclareFontShape{U}{wncy}{m}{n}{<->wncyr10}{}
\DeclareSymbolFont{mcy}{U}{wncy}{m}{n}
\DeclareMathSymbol{\Sh}{\mathord}{mcy}{"58} 
\DeclareMathSymbol{\B}{\mathord}{mcy}{"42}
\begin{document}
\begin{abstract}
	We initiate a general quantitative study of sets of $\cM$-points, which are special subsets of rational points, generalizing Campana points, Darmon points, and squarefree solutions of Diophantine equations. We propose an asymptotic formula for the number of $\cM$-points of bounded height on rationally connected varieties, extending Manin's conjecture as well as its generalization to Campana points by Pieropan, Smeets, Tanimoto and Várilly-Alvarado. Finally, we show that the conjecture explains several previously established results in arithmetic statistics.
\end{abstract}
	\title{Manin's conjecture for $\cM$-points}
	\author{Boaz Moerman}
	%	\address{Mathematical institute, Utrecht University, Utrecht, NETHERLANDS}
	%	\email{b.c.moerman@uu.nl}
	
	%\date{\today}
	 \subjclass[2020]{Primary 11D45; Secondary 11G35, 11G50, 14G05}
	\maketitle

	\setcounter{tocdepth}{1}
	\tableofcontents

	\section{Introduction}
Over the last few decades there has been a focused effort to count the number of rational points on algebraic varieties and special subsets thereof. Manin's conjecture is a major source of inspiration for many of these developments, which gives an asymptotic formula for the number of rational points of bounded height on rationally connected varieties. This conjecture has been refined over several decades \cite{FMT89, BaMa90,Pey95, BaTs98,LeSeKuSh22}, and it has been established for several families of varieties (see e.g. \cite{BaTs96, GoMaOh08, Woo24}).% such as toric varieties \cite{BaTs96}, wonderful compactifications \cite{GoMaOh08} and Châtelet surfaces \cite{Woo24}.

In \cite{PSTVA21}, Pieropan, Smeets, Tanimoto and Várilly-Alvarado proposed an asymptotic formula for the number of Campana points of bounded height, extending Manin's conjecture. Campana points are rational points which intersect a given collection of boundary divisors $D_1,\dots,D_n$ with high multiplicity, and they arise naturally as powerful solutions of Diophantine equations. Their conjecture has been established in various special cases, as we will discuss below.

Besides Campana points, various other related subsets have been studied recently, including weak Campana points, geometric Campana points and (geometric) Darmon points. Several asymptotic counting results have been established for these points \cite{Str21,ShSt24, Ito25, Ara25}, in the spirit of Manin's conjecture. Despite these results, no extension of Manin's conjecture to these types of points has been proposed thus far.

These various special sets of rational points mentioned are all instances of sets of $\cM$-points, which were first introduced in \cite{Moe24}. Many other arithmetically interesting sets of rational points can be viewed as $\cM$-points, such as the set of squarefree solutions to a given Diophantine equation. Similarly, the set $$\{(a_1:\dots:a_n)\in \mathbb{P}^{n-1}(\mathbb{Z})\mid  f(a_1,\dots,a_n) \text{ squarefree}\}$$ is also a set of $\cM$-points, for any given homogeneous integer polynomial $f$. Such sets are related to the study of squarefree values of polynomials, which is an enduring and active research area, see for example \cite{Hoo67, Fil92, Poo03, SaWa23}.

In this article we introduce Conjecture \ref{conj: generalized Manin conjecture}, which gives an asymptotic formula for the number of $\cM$-points of bounded height, further extending Manin's conjecture. We show that the conjecture agrees with the prediction given for Campana points in \cite{PSTVA21}, with the corrected leading constant formulated in \cite[Conjecture 8.3]{CLTT24}.

Our conjecture also gives an asymptotic for Campana points of bounded height for more Campana pairs than the conjecture in \cite{PSTVA21}.
 One major limitation in the applicability of the latter conjecture is that it requires the boundary to be a strict normal crossings divisor. In contrast, our conjecture only assumes that the boundary is a normal crossings divisor after passing to the algebraic closure of the base field. In particular, our conjecture explains the asymptotic formula \cite{Str21} found by Streeter for the number of powerful values of norm forms. His results did not match the asymptotic formula given by (a naive extension) of the conjecture \cite{PSTVA21}, but his results agree with our new conjecture, as we will show.
 
 More generally, the weakened hypothesis makes Conjecture \ref{conj: generalized Manin conjecture} applicable in much more situations than the previous conjecture. For instance, our conjecture gives a prediction for the number of Campana points of bounded height on a toric variety (with torus-invariant boundary), while the conjecture in \cite{PSTVA21} is only applicable for special toric varieties such as split toric varieties.

Furthermore, our conjecture also predicts an asymptotic for the number of weak Campana points of bounded height. In \cite{PSTVA21}, a preliminary investigation was done into such asymptotics, but the authors concluded that a conjecture for them would require a new set of ideas.

Finally, we verify that the conjecture agrees with the various results \cite{Str21,ShSt24, Ito25, Ara25} known on $\cM$-points. We will treat Streeter's result \cite[Theorem 1.1]{Str21} on powerful values of norm forms in great detail, to show it agrees with our prediction. In particular, this gives a geometric interpretation for the exponent on the logarithm appearing in the asymptotic, which was not known previously \cite[Remark 1.5]{Str21}. In another work \cite{Moe25toric}, we prove this conjecture for split toric varieties over the rational numbers, giving the first general results on counting $\cM$-points.

\subsection{Manin's conjecture and Campana points}
Before stating our conjecture, we first give a short overview of Manin's conjecture and its generalization to Campana points.

If $X$ is a variety over a number field and $\cL$ is an adelically metrized line bundle on $X$, then we write $$H_{\cL}\colon X(K)\rightarrow \mathbb{R}_{>0}$$ for the height on $X$ as defined in \cite[\S 1.3]{Pey95}. We denote the corresponding counting function by $$N(A,\cL,B)=\#\{P\in A\mid H_{\cL}(P)\leq B\},$$ where $A\subset X(K)$ and $B$ is an integer. We recall the most recent version of Manin's conjecture conjecture, which is given for example in \cite[Conjecture 1.2]{LeSeKuSh22}.
\begin{conjecture}[Manin's conjecture] \label{conjecture: Manins conjecture}
	Let $X$ be a proper smooth rationally connected variety over a number field $K$ and assume that $X(K)$ is not a thin set. Then there exists a thin set $Z\subset X(K)$ such that 
	$$N(X(K)\setminus Z,\cL,B)\sim cB^{a(X,L)}(\log B)^{b(K,X,L)-1} \quad \text{ as } B\rightarrow \infty,$$
	where $c>0$ is an explicit Tamagawa constant, $$a(X,L)=\inf\{t\in \mathbb{R}\mid tL+K_X\in \overline{\Eff}^1(X)\}$$ is the Fujita invariant of $X$ with respect to $L$ and $b(K,X,L)$ is the codimension of the minimal face of the pseudo-effective cone $\overline{\Eff}^1(X)$ containing $a(X,L)L+K_X$.
\end{conjecture}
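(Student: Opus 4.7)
The conjecture is open in this generality, so a realistic plan can only outline the standard strategy and indicate where it would break down. The approach is to study the height zeta function
\begin{equation*}
\zeta_{\cL}(A, s) \;=\; \sum_{P \in A} H_{\cL}(P)^{-s}, \qquad A \subset X(K),
\end{equation*}
and to extract the asymptotic for $N(A,\cL,B)$ from its analytic behavior via a Tauberian theorem of Landau--Delange or Wiener--Ikehara type. It is sufficient to show that for a suitable thin set $Z$ the function $\zeta_{\cL}(X(K)\setminus Z,s)$ converges for $\Re(s) > a(X,L)$, extends meromorphically past $s = a(X,L)$ with a pole of order exactly $b(K,X,L)$ and leading coefficient $c\cdot a(X,L)\cdot (b(K,X,L)-1)!$, while every other pole on the line $\Re(s) = a(X,L)$ has strictly smaller order.

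The first geometric step is to pin down $Z$. Following \cite{LeSeKuSh22}, I would take $Z$ to be the union of rational points of subvarieties $Y \subset X$ whose Fujita invariant satisfies $a(Y, L|_Y) > a(X,L)$, together with the images of rational points under the finitely many dominant maps $f\colon Y \to X$ with $a(Y, f^{*}L) = a(X,L)$ but strictly larger pole order. That this union is thin rests on boundedness results for Fano-type varieties and on the behaviour of Fujita invariants in families, and this identification is one of the most subtle parts of the modern formulation.

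The second step is the explicit computation of $\zeta_{\cL}(X(K)\setminus Z, s)$. The cleanest situation is an equivariant compactification of an algebraic group $G$: Poisson summation on $G(\mathbb{A}_K)$ rewrites the adelic integral of $H_{\cL}^{-s}$ as a sum over characters, the pole at $s = a(X,L)$ is contributed by the trivial character, and its order matches the rank of an explicit Picard-type lattice, which equals $b(K,X,L)$. This executes the plan in full for toric varieties (Batyrev--Tschinkel), for additive equivariant compactifications (Chambert-Loir--Tschinkel) and for wonderful compactifications; outside this setting one resorts to the circle method for complete intersections of low degree, or to universal torsors for del Pezzo surfaces.

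The hard part, and the reason the conjecture is open in general, is that for a rationally connected $X$ without a group action, a fibration amenable to the circle method, or a tractable universal torsor, there is no known uniform mechanism producing meromorphic continuation of $\zeta_{\cL}$ past $\Re(s) = a(X,L)$. Even the Tamagawa constant $c$, an Euler product whose convergence relies on good reduction at almost all places, becomes delicate once the geometric hypotheses are weakened --- a difficulty that will reappear for the $\cM$-point extension developed in the body of this paper. My plan therefore terminates at the analytic step: beyond the structured classes listed above, substantively new ideas tailored to the specific geometry of $X$ are required, as in the split toric case to be treated in the companion work \cite{Moe25toric}.
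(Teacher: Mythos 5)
This statement is Conjecture 1.1 (Manin's conjecture), which the paper states as background and does not --- and cannot --- prove; it remains open in the generality given. Your response correctly recognizes this and gives an accurate overview of the standard height-zeta-function strategy, the role of the thin exceptional set, and the cases (toric, additive, wonderful compactifications, low-degree complete intersections) where the conjecture is known, so there is nothing to compare against a paper proof and nothing to correct.
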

Manin's conjecture was first formulated and studied for Fano varieties with an anticanonical height in 1989 and 1990 by Manin, Batyrev, Tschinkel and Franke \cite{FMT89, BaMa90}. Peyre \cite{Pey95} further contributed to the conjecture by giving a conjectural value for the constant $c$. Later, Batyrev and Tschinkel extended the conjecture to heights induced by other divisors. More recently Lehmann, Sengupta and Tanimoto \cite[Conjecture 5.2]{LeSeKuSh22} have formulated a prediction for the thin set $Z$ that has to be excluded.

For Campana points, a similar conjecture has been formulated. Recall that (klt) Campana points on a smooth projective variety $X$ are defined using a Campana pair $(X,D_{\bfm})$, where $$D_{\bfm}=\sum_{i=1}^n \left(1-\tfrac{1}{m_i}\right)D_i$$ is a boundary divisor given by weights $m_1,\dots,m_n\in \mathbb{N}$ and prime divisors $D_1,\dots,D_n$ on $X$. Let $S\subset \Omega_K$ be a finite set of places of $K$ including the infinite places and let $\cX$ be an $\cO_S$-integral model of $X$. Let  $(\cX,\cD_{\bfm})$ be the corresponding $\cO_S$-integral model of $(X,D_{\bfm})$ and write $(\cX,\cD_{\bfm})(\cO_S)$ for its set of Campana points over $\cO_S$. In \cite{PSTVA21}, Pieropan, Smeets, Tanimoto and Várilly-Alvarado formulated the following conjecture on the number of Campana points of bounded height.
\begin{conjecture} \label{conjecture: Manin Campana}
	Let $X$ be a smooth Fano variety and assume that the log-anticanonical divisor class $-K_X-D_{\bfm}$ is ample and that the support $\Supp D_{\bfm}$ is a strict normal crossings divisor. Furthermore let $\cX$ be a regular $\cO_S$-model $\cX$ for which $(\cX,\cD_{\bfm})(\cO_S)$ is not a thin set. Then there exists a thin set $Z\subset (\cX,\cD_{\bfm})(\cO_S)$ such that 
	$$N((\cX,\cD_{\bfm})(\cO_S)\setminus Z,\cL,B)\sim cB^{a((X,D_{\bfm}),L)}(\log B)^{b(K,(X,D_{\bfm}),L)-1} \quad \text{ as } B\rightarrow \infty,$$
	where $c=c(K,S,(\cX,\cD_{\bfm}),\cL)>0$ is a constant,
	$$a((X,D_{\bfm}),L)=\inf\{t\in \mathbb{R}\mid tL+K_X+D_{\bfm}\}$$ is the Fujita invariant of $(X,D_{\bfm})$ with respect to $L$, and $b(K,(X,D_{\bfm}),L)$ is the codimension of the minimal face of $\overline{\Eff}^1(X,M)$ containing $a((X,D_{\bfm}),L)L+K_X+D_{\bfm}$.
	
	Furthermore, if this divisor class is rigid, then $c$ is an explicit Tamagawa constant, given in \cite[Conjecture 8.3]{CLTT24}.
\end{conjecture}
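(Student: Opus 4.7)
My plan is to attack Conjecture~\ref{conjecture: Manin Campana} via the universal torsor (or Cox ring) method, which has been the most successful general framework for establishing Manin's conjecture on Fano varieties. Since $X$ is Fano, the Cox ring is finitely generated, so one has a universal torsor $\pi\colon \mathcal{T}\to X$ with a natural $\cO_S$-integral model $\mathfrak{T}\to \cX$. First I would lift Campana points on $(\cX,\cD_{\bfm})$ to integral points on $\mathfrak{T}$, modulo the action of the N\'eron--Severi torus; the snc hypothesis on $\Supp D_{\bfm}$ is crucial here, since it guarantees that for a lift $(x_1,\dots,x_N)$ of a rational point, each boundary coordinate $x_i$ detects the intersection multiplicity with $D_i$. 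The Campana condition then translates to the statement that each such $x_i$ is, locally at every prime $\fp\notin S$, either an $m_i$-th power times a unit or zero.

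Next I would count the lifted integral points satisfying these $m_i$-th power conditions together with the height bound $H_{\cL}(P)\leq B$. The classical trick is the substitution $x_i=u_i^{m_i}$, which transforms the Campana counting problem into counting integral points on a modified variety, to which one can apply lattice point counting or Poisson summation as in the proof of Manin's conjecture. The leading exponent $a((X,D_{\bfm}),L)$ should arise from the volume growth of the resulting counting region, and the logarithmic factor of exponent $b(K,(X,D_{\bfm}),L)-1$ from a Tauberian theorem applied to the height zeta function, whose pole structure is governed by the minimal face of $\overline{\Eff}^1(X)$ containing $a((X,D_{\bfm}),L)L+K_X+D_{\bfm}$. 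Finally, when this divisor class is rigid, one would reassemble the local densities into the corrected Tamagawa measure from \cite[Conjecture 8.3]{CLTT24}.

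The hard part, I expect, is twofold. First, one must pin down the thin set $Z$ to exclude: as in the classical Manin conjecture, accumulating subvarieties with smaller Fujita invariant can dominate the count, and for Campana pairs new accumulating subvarieties can arise because the Campana condition imposes weaker constraints on subvarieties contained in $\Supp D_{\bfm}$. Second, the $m_i$-th power conditions interact nontrivially with the height near points where several boundary components meet transversely, so uniform control on the local densities at primes of bad reduction is delicate. Because of these issues the conjecture is presently known only in special classes such as split toric varieties and certain complete intersections amenable to the circle method, and a proof in full generality will almost certainly require substantive new geometric input, particularly a description of the accumulating subvarieties of the log-pair $(X,D_{\bfm})$.
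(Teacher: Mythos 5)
This statement is a \emph{conjecture}, not a theorem, and the paper does not prove it; it is Conjecture~1.1 of \cite{PSTVA21}, here restated with the leading-constant modification of \cite[Conjecture 8.3]{CLTT24}. The paper's contribution with respect to it is not a proof but a \emph{generalization}: the paper proposes the broader Conjecture~\ref{conj: generalized Manin conjecture} for $\cM$-points and, via Theorem~\ref{theorem: invariants quasi-Campana} (combining Propositions~\ref{prop: Fujita invariant quasi-Campana}, \ref{prop: b-invariant quasi-Campana}, and \ref{prop: rigid quasi-Campana}), shows that Conjecture~\ref{conjecture: Manin Campana} is recovered as a special case when the Campana pair has snc boundary over the base field. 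So there is no ``paper's own proof'' to compare your argument against.

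Your sketch is a plausible survey of the universal-torsor/Cox-ring programme that has succeeded for split toric varieties (\cite{PiSc23}) and for various diagonal hypersurfaces and homogeneous spaces, and you correctly observe that the substitution $x_i = u_i^{m_i}$ encodes the Campana condition on torsor coordinates, that the log-factor exponent arises from the pole order of the height zeta function, and that the conjecture is presently open in general. But what you have written is a research programme, not a proof: none of the genuinely hard steps are carried out (establishing the analytic continuation and pole structure of the height zeta function for a general Fano base, controlling local densities uniformly, identifying the accumulating thin set $Z$, matching the local contributions to the corrected Tamagawa measure of \cite{CLTT24}). If you were asked to ``prove'' this conjecture, the honest answer is that no proof currently exists, and the correct reading of its role in this paper is as a known open prediction that the author's new Conjecture~\ref{conj: generalized Manin conjecture} strictly extends and whose invariants are reinterpreted through the pair formalism of Sections~\ref{section: Picard group pair}--\ref{section: quasi-Campana pairs}. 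Also note a small point: your framing treats the ampleness of $-K_X - D_{\bfm}$ as merely ensuring finite generation of the Cox ring, but in the conjecture as stated it is needed to guarantee that the log-Fujita invariant is positive and that the relevant adjoint divisor lies in the pseudo-effective cone, which is a separate role.
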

The original conjecture, as given in \cite{PSTVA21}, had a different prediction for the leading constant. However, Shute \cite{Shu22} and Streeter \cite{Str21} independently gave counterexamples to this prediction. In \cite{CLTT24}, Chow, Loughran, Takloo-Bighash and Tanimoto modified the conjecture by changing the Tamagawa constant to the one cited above, so that Shute's and Streeter's results are in agreement with their conjecture. Their constant is defined very similarly to the original constant, but it is defined using a different Brauer group which we recall in Section \ref{section: fundamental group and Brauer group}.

Conjecture \ref{conjecture: Manin Campana} has been proven for various Campana pairs. These include results on diagonal hypersurfaces \cite{Val12, BrYa21,Shu21,Shu22,BBKOPW23}, vector group compactifications \cite{PSTVA21}, norm forms \cite{Str21}, biequivariant compactifications of the Heisenberg group \cite{Xia21} and wonderful compactifications \cite{CLTT24}. Furthermore, for the log-anticanonical height, this conjecture has been proven for complete toric varieties \cite{PiSc23, ShSt24} and certain complete intersections therein \cite{PiSc24}.

Furthermore, in \cite{Fai23, Fai25} Faisant proves a motivic analogue of the conjecture for Campana points for vector group compactifications and toric varieties.

%In recent years, Campana points have attracted a significant amount of attention, as many results and conjectures concerning rational points extend to Campana points. One such example is the Mordell Conjecture \cite{Fal83}, whose analogue for Campana points has been introduced by Campana \cite{Cam05} %Th\'eor\`eme 3.8., Conjecture 4.5.
%and proven over function fields of characteristic $0$ by him \cite{Cam05} and recently in \cite{KPS22} in arbitrary characteristic. Over number fields the conjecture is implied by the $abc$ Conjecture, see \cite[Appendix]{Sme17} for example. Recently in \cite{BaJa23} it was shown that the Kobayashi--Ochiai Theorem generalizes to Campana pairs, giving a higher dimensional analog of the Mordell Conjecture for the theory of Campana points over function fields. This result has found uses in proving scarcity of rational points on some surfaces \cite{Sme17,KPS22} and certain threefolds \cite{BJR23}. Campana points have also been used to show that the weakly special conjecture of Harris and Tschinkel \cite{HaTs00} contradicts the $abc$-Conjecture \cite{BCJW24}.

The assumption in Conjecture \ref{conjecture: Manin Campana} that $\Supp D_{\bfm}$ is a strict normal crossings divisor is rather restrictive, as many interesting Campana pairs do not satisfy this hypothesis. For instance, this assumption need not be satisfied if $X$ is a smooth projective toric variety and $\Supp D_{\bfm}$ is the complement of the open torus. Nevertheless, this assumption is important as Streeter shows in \cite{Str21} that the logarithmic factor appearing in the asymptotic is larger than the conjecture would suggest. In order to avoid this issue, Shute and Streeter introduced the notion of geometric Campana points \cite{Str21,ShSt24}, which are a variant on Campana points defined using the geometric components of $D_{\bfm}$ (rather than those defined over $K$). They showed that Conjecture \ref{conjecture: Manin Campana} holds for the \textit{geometric} Campana points on toric Campana pairs with respect to the log-anticanonical height.

In a similar fashion, analogues of Conjecture \ref{conjecture: Manin Campana} have been proven for (geometric) Darmon points \cite{ShSt24, Ito25, Ara25}.
	
\subsection{\texorpdfstring{$\cM$}{M}-points}
One thing in common between Campana points, Darmon points and their variants, is that they are defined as rational points satisfying certain conditions on their intersection multiplicities with the boundary components.
In this article, we will extend Conjecture \ref{conjecture: Manin Campana} to the setting of $\cM$-points. $\cM$-points are a broad generalization of Campana points, Darmon points, as well as their variants mentioned above. They were first introduced by the author in \cite{Moe24}. Like Campana points, $\cM$-points are rational points satisfying conditions on their intersection multiplicities with the boundary components.

Similar to how Campana points are points on an integral model of a Campana pair $(X,D_{\bfm})$, $\cM$-points can be viewed a points on an integral model of a pair $(X,M)$. Here the pair $(X,M)$ consists of a proper variety $X$ together with a parameter set $M$, the latter of which is often given by a tuple of divisors $(D_1,\dots,D_n)$ of divisors on $X_{\overline{K}}$ together with a multiplicity set $\fM\subset (\mathbb{N}\cup \{\infty\})^n$ of allowed intersection multiplicities with these divisors.% If the divisors $D_1,\dots,D_n$ are defined over $K$, then we say that $(X,M)$ is \textit{split}.
\begin{example}
	For instance, if $D_1,\dots,D_n$ are the coordinate hyperplanes of $X=\mathbb{P}^{n-1}$, then the set of $\cM$-points over $\mathbb{Z}$ is
	$$\{(a_1:\dots:a_n)\in \mathbb{P}^n(\mathbb{Q})\mid (v_p(a_1),\dots,v_p(a_n))\in \fM\text{ for all prime numbers } p\}.$$
\end{example}
More generally, we also allow the set $M$ to impose conditions with respect to the different intersection strata, as we discuss in Section \ref{section: pairs and M-points}.  The notion of pair used in this article is more general than the notion considered in \cite{Moe24, Thesis}, as we explain in Section \ref{section: pairs and M-points}, where we define pairs and $\cM$-points.

\subsection{\texorpdfstring{$\cM$}{M}-points of bounded height}
	Now we will formulate a version of Manin's conjecture for $\cM$-points.
	Let $(X,M)$ be a smooth, proper and rationally connected pair over a number field $K$, as defined in Section \ref{section: Manin's conjecture for M-points}. For such a pair, we introduce its Picard group $\Pic(X,M)$ along with a natural group homomorphism $\pr^*_M \colon \Pic(X)\rightarrow \Pic(X,M)$, its pseudo-effective cone $\overline{\Eff}^1(X,M)$ as well as its canonical divisor class $K_{(X,M)}\in \Pic(X,M)$. Using these notions, we formulate our conjecture in exact analogy to Manin's conjecture. Fix a finite set $S$ of places of $K$, including all infinite places and the places that ramify in the splitting field of $(X,M)$. For an integral model $(\cX,\cM)$ of $(X,M)$ over $\cO_S$, we denote the set of $\cM$-points on $(\cX,\cM)$ over $\cO_S$ by $(\cX,\cM)(\cO_S)\subset X(K)$. As before, we let $L$ be a big and nef divisor class with adelic metrization $\cL$.
	\begin{conjecture} \label{conj: generalized Manin conjecture}
		Let $(X,M)$ be a smooth proper pair over a number field $K$ such that $(X,M)$ is rationally connected, and let $(\cX,\cM)$ be an integral model of $(X,M)$ over $\cO_S$. Assume furthermore that $(\cX,\cM)(\cO_S)\subset X(K)$ is Zariski-dense in $X$. Then there exists a thin set $Z\subset X(K)$ such that
		$$N((\cX,\cM)(\cO_S)\setminus Z,\cL,B)\sim cB^{a((X,M),L)}(\log B)^{b(K,(X,M),L)-1} \quad \text{ as } B\rightarrow \infty,$$
		where $c>0$ is a constant, $$a((X,M),L)=\inf\{t\in \mathbb{R}\mid t\pr^*_M L+K_{(X,M)}\}$$ is the Fujita invariant of $(X,M)$ with respect to $L$, and $b(K,(X,M),L)$ is the codimension of the minimal face of $\overline{\Eff}^1(X,M)$ containing $a((X,M),L)\pr^*_M L+K_{(X,M)}$. %=c(K,S,(\cX,\cM),\cL,Z)>0
		
		Furthermore, assume that that $L$ is adjoint rigid with respect to $(X,M)$ as in Definition \ref{def: adjoint rigid}, and that the fundamental group $\pi_1(X^{\circ}_{\overline{K}},M^{\circ})$ is abelian (see Section \ref{section: leading constant general}). Then the constant $c=c(K,S,(\cX,\cM),\cL)$ is a Tamagawa constant explicitly given in Section \ref{section: leading constant general}.
	\end{conjecture}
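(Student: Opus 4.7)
The plan is to adapt the height zeta function and universal torsor framework of Batyrev--Tschinkel and Salberger to the $\cM$-point setting, leveraging the hypotheses that $L$ is adjoint rigid and that $\pi_1(X^{\circ}_{\overline{K}},M^{\circ})$ is abelian. The abelianness should permit a parameterization of $(\cX,\cM)(\cO_S)$, modulo a thin set $Z$, by integral points on a torsor $T\to X^{\circ}$ under a group of multiplicative type whose character lattice is, up to finite index, $\Pic(X,M)$. This torsor plays the role of the $\cM$-analogue of the Cox ring construction: the fundamental group $\pi_1(X^{\circ}_{\overline{K}},M^{\circ})$ classifies the finite étale covers that trivialize the boundary multiplicities encoded in $\fM$, and its Pontryagin dual furnishes the group over which harmonic analysis will be performed.

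With this parameterization in hand, I would form the height zeta function
$$Z(s) = \sum_{P \in (\cX,\cM)(\cO_S)\setminus Z} H_{\cL}(P)^{-s}$$
and apply Poisson summation on the adelic points of $T$. This converts the count into an integral of an Euler product of local height integrals against characters of the dual torus. At each finite place $v$, a local principalization of the boundary reduces the local factor to a finite sum, indexed by allowed tuples in $\fM$, of $\cO_v$-volumes of the corresponding strata, producing a regularized Euler product whose rightmost pole lies at $s=a((X,M),L)$ with order $b(K,(X,M),L)$; this matches the prediction since these invariants are built from the cone $\overline{\Eff}^1(X,M)$ in precisely the way that governs the analytic behavior of such Euler products. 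A Tauberian theorem of Landau--Delange type would then deliver the asymptotic, and a careful residue calculation must match the Tamagawa constant defined in Section \ref{section: leading constant general}.

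The hard part, I expect, is twofold. First, one must pin down the thin set $Z$: beyond the classical accumulating subvarieties of higher Fujita invariant, one must also discard loci where the $\cM$-conditions themselves induce anomalous intersection behavior that inflates the logarithmic exponent, the phenomenon Streeter identified for powerful values of norm forms and which motivated the refined conjecture in the first place. Second, the non-split Galois action on the geometric boundary components --- permitted here, unlike in Conjecture \ref{conjecture: Manin Campana} --- introduces Galois-cohomological subtleties that must be unwound before the abelian harmonic analysis of the second step can reproduce the corrected Brauer-group Tamagawa constant of \cite{CLTT24}. The forthcoming split toric case \cite{Moe25toric} suggests that the overall strategy is feasible, but already in that restricted setting the residue identification is delicate; the general pair will demand substantially more geometric input, and I would expect only special subfamilies (toric, vector-group, or norm-form pairs) to be within reach in the immediate term.
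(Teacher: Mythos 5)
The statement you were assigned is Conjecture~\ref{conj: generalized Manin conjecture}, which the paper \emph{proposes} rather than proves; it is the article's central open prediction. The body of the paper is devoted to constructing the invariants $a((X,M),L)$, $b(K,(X,M),L)$, $\Pic(X,M)$, $\Br(X,M)$ and the Tamagawa constant, to showing consistency with Conjecture~\ref{conjecture: Manin Campana} and the corrected leading constant of \cite{CLTT24}, and to checking compatibility with the known special cases in \cite{Str21,ShSt24,Ito25,Ara25}. There is therefore no proof of this statement in the paper against which your attempt can be compared, and no proof is expected: a proof of this conjecture in full generality would in particular resolve Manin's conjecture for all rationally connected varieties.

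What you have written is a plausible research program rather than a proof, and you are explicit about this in your final paragraph, which is the correct assessment. Your outline --- a universal-torsor/Cox-ring parameterization of $(\cX,\cM)(\cO_S)$, a height zeta function, Poisson summation against automorphic characters, and a residue computation matched to the constant of Section~\ref{section: leading constant general} --- is indeed the strategy the author indicates for the split toric case to appear in \cite{Moe25toric}, and the paper's Lemma~\ref{lemma: Brauer group toric pair} and Remark~\ref{remark: alpha constant} are visibly tooled for exactly that argument. But every substantive step in your sketch is left open: the torsor description of $\cM$-points under the abelianness hypothesis is asserted, not constructed; the identification of local factors with $\fM$-indexed volumes and the analytic continuation of the resulting regularized Euler product are not established; the thin set $Z$ is not pinned down; and the residue-to-Tamagawa-constant matching, which you yourself flag as delicate even in the toric case, is not carried out. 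So the response should not be graded as a proof with a gap so much as a correct recognition that this is an open conjecture together with a reasonable (and paper-consistent) roadmap toward special cases.
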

This conjecture is an extension of the conjecture \cite[Conjecture 1.2.2]{Thesis} presented in the author's PhD thesis, where only split pairs were considered and no prediction for the leading constant was given.
\begin{remark}
The assumption that $(X,M)$ is smooth is a much more relaxed assumption than the assumption in Conjecture \ref{conjecture: Manin Campana} that $D_{\bfm}$ has strict normal crossings support. For instance, the conjecture can be applied to any smooth toric variety, where the boundary is taken to be the complement of the torus.
\end{remark}
	\begin{remark}
		The assumption on the fundamental group is satisfied whenever the complement $U$ of the boundary divisors has an abelian fundamental group (which is satisfied in all cases for which the conjecture is known). In personal communication, Tim Santens has suggested a Campana pair for which we expect that the prediction in Conjecture \ref{conj: generalized Manin conjecture} for the constant fails due to the existence of a nonabelian cover. This assumption was also suggested to be necessary in \cite{CLTT24} for the same reason, see \cite[Remark 8.4]{CLTT24}.
	\end{remark}
	Conjecture \ref{conj: generalized Manin conjecture} can be directly seen to be a generalization of Conjecture \ref{conjecture: Manins conjecture}. It is not as apparent that it also generalizes Conjecture \ref{conjecture: Manin Campana}, but this follows from the next result, which gives a description of the Fujita invariant and $b$-invariant for (geometric) Campana points, (geometric) Darmon points, and weak Campana points. %The predicted leading constant in \cite[Conjecture 1.1]{PSTVA21} has been shown to be incorrect independently by Shute and Streeter \cite{Shu22,Str21}, and a modified prediction for this constant has been given Chow, Loughran, Takloo-Bighash and Tanimoto in \cite[Conjecture 8.3]{CLTT24}. In Section \ref{section: Compatibility}, we show that our leading constant agrees with theirs, so Conjecture \ref{conj: generalized Manin conjecture} is a proper generalisation of \cite[Conjecture 8.3]{CLTT24}.

\begin{theorem} \label{theorem: invariants quasi-Campana}
	Let $(X,D_{\bfm})$ be a Campana pair over a number field $K$ such that $X$ is smooth and its effective cone is a rational polyhedral cone. Furthermore assume that there is a field extension $E/K$ such that $$D_{\bfm,E}=\sum_{i=1}^n \left(1-\frac{1}{m_i}\right)D_i$$ is a strict normal crossings divisor, for prime divisors $D_1,\dots,D_n$. Let $(X,M)$ be the pair corresponding to one of the following special points on the Campana pair $(X,D_{\bfm})$: (geometric) Campana points, (geometric) Darmon points, weak Campana points. %Assume that Conjecture \ref{conj: generalized Manin conjecture} is true for the pair $(X,M)$. Let $S$ be a set of places including the finite places and the places which ramify in $E$ and let $(\cX,\cM)$ be a $\cO_S$-integral model of $(X,M)$, such that the set of $\cM$-points is Zariski-dense. For every big and nef divisor class $L$ with adelic metrization $\cL$, there exists a thin set $Z\subset X(K)$ such that the number of $\cM$-points of bounded height outside $Z$ tends to
	
%	$$N((\cX,\cM)\setminus Z,\cL,B)\sim cB^{(a((X,M),L))}(\log B)^{b(K,(X,M),L)-1} \quad \text{ as } B\rightarrow \infty,$$
Then the Fujita invariant appearing in Conjecture \ref{conj: generalized Manin conjecture} is given by $$a((X,M),L)=\inf\{t\in \mathbb{R}\mid tL+K_X+D_{\bfm}\in \overline{\Eff}^1(X)\}.$$
	The description of the $b$-invariant in the conjecture depends on the type of $\cM$-points considered. Let $b(K,(X,D_{\bfm}),L)$ be the codimension of the minimal face of the pseudo-effective cone $\overline{\Eff}^1(X)$ containing $a((X,M),L)L+K_X+D_{\bfm}$.
	\begin{itemize}
		\item If $(X,M)$ is the pair corresponding to geometric Campana(/Darmon) points, then $$b(K,(X,M),L)=b(K,(X,D_{\bfm}),L).$$
		\item If $(X,M)$ is the pair corresponding to weak Campana points, then 
		$$b(K,(X,M),L)=b(K,(X,D_{\bfm}),L)+\#B/G.$$
		Here $G=\gal(E/K)$ and $B$ is the set of all tuples $(\bfw,c)$ such that
		\begin{enumerate}
			\item $\bfw\in \mathbb{N}^n$ with $\sum_{i=1}^n \frac{w_i}{m_i}=1$, with $\bfw\neq w_1\bfe_1,\dots,w_n\bfe_n$,
			\item $w_i=0$ for all $i\in \{1,\dots,n\}$ such that $D_i$ appears in the support of $$a((X,M),L)L+K_X+D_{\bfm},$$
			\item $c$ is an irreducible component of
			$\bigcap_{w_i>0}D_i.$
		\end{enumerate}
		The action of $G$ on $\mathbb{N}^n$ here is induced by the action of $G$ on $\{D_1,\dots,D_n\}$.
		
		\item If $(X,M)$ is the pair corresponding to Campana points or Darmon points, then $$b(K,(X,M),L)=b(K,(X,D_{\bfm}),L)+\#B'/G,$$
		where $B'\subset B$ is the subset of all $(\bfw,c)$ such that $\{D_i\mid i\in\{1,\dots,n\},\, w_i>0\}$ is contained in a Galois orbit.
	\end{itemize}
	Furthermore, the divisor $a((X,M),L)L+K_X+D_{\bfm}$ is rigid if and only if $L$ is adjoint rigid with respect to $(X,M)$.% and the fundamental group of $(X,M)$ is abelian, then $c=C(K,S,(\cX,\cM),\cL)$ is an explicit constant. For (geometric) Campana points, this constant agrees with \cite[Conjecture 8.3]{CLTT24} if the support of $D_{\bfm}$ is a strict normal crossings divisor.
	%Furthermore if $(X,M)$ is the pair corresponding to Campana points, then the constant 
\end{theorem}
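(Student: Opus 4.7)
The plan is to reduce each assertion to a direct description of the triple $(\Pic(X,M)_{\mathbb{R}}, \overline{\Eff}^1(X,M), K_{(X,M)})$ and the map $\pr_M^* \colon \Pic(X)_{\mathbb{R}} \rightarrow \Pic(X,M)_{\mathbb{R}}$. Concretely, I expect that for each of the four flavors of $\cM$-points listed, the $\mathbb{R}$-vector space $\Pic(X,M)_{\mathbb{R}}$ splits canonically as $\pr_M^*\Pic(X)_{\mathbb{R}} \oplus V_M$, where $V_M$ has a distinguished basis indexed (up to Galois) by a subset of the pairs $(\bfw,c)$ described in the theorem. Under this splitting $K_{(X,M)}$ corresponds to $(K_X+D_{\bfm},0)$, since by the definition of $K_{(X,M)}$ in Section \ref{section: Manin's conjecture for M-points} the log-canonical contribution of each boundary component $D_i$ with weight $1-1/m_i$ matches the Campana formula and no extra contribution comes from the stratum directions. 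For geometric Campana/Darmon points, $V_M=0$; for weak Campana points, $V_M$ is indexed by all tuples $(\bfw,c)$ satisfying condition (1); for Campana/Darmon points, one further restricts to those pairs whose associated divisors lie in a single Galois orbit, giving exactly $B'/G$ many generators.

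Granting these descriptions, the Fujita invariant statement follows immediately: the extremal rays of $\overline{\Eff}^1(X,M)$ pulled back from $\overline{\Eff}^1(X)$ already contain $a((X,M),L)\pr_M^* L + K_{(X,M)}$ when $tL+K_X+D_{\bfm}$ first becomes pseudo-effective, and the extra rays in $V_M$ lie on independent coordinate axes so cannot decrease the infimum. For the $b$-invariant I would observe that the minimal face of $\overline{\Eff}^1(X,M)$ containing $a((X,M),L)\pr_M^* L + K_{(X,M)}$ is the product of the minimal face of $\overline{\Eff}^1(X)$ containing $a((X,M),L)L+K_X+D_{\bfm}$ with the span of those stratum generators $e_{(\bfw,c)}$ that lie in that face; condition (2) in the theorem is exactly the criterion that the stratum $e_{(\bfw,c)}$ does not appear in the support of the divisor $a((X,M),L)L+K_X+D_{\bfm}$ and therefore lies in the minimal face. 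Counting Galois orbits yields the stated formulas $\#B/G$ and $\#B'/G$. The rigidity assertion is then a formal consequence: adjoint rigidity of $L$ with respect to $(X,M)$ means that $a((X,M),L)\pr_M^* L + K_{(X,M)}$ is a rigid class in $\Pic(X,M)_{\mathbb{R}}$, and since rigidity is compatible with the direct sum decomposition and the stratum generators contribute extremal rays, this is equivalent to rigidity of $a((X,M),L)L+K_X+D_{\bfm}$ in $\Pic(X)_{\mathbb{R}}$.

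The main obstacle will be proving the explicit description of $V_M$ and of the extremal rays of $\overline{\Eff}^1(X,M)$, which requires unpacking the definitions of $\Pic(X,M)$, $\overline{\Eff}^1(X,M)$ and $K_{(X,M)}$ from Section \ref{section: Manin's conjecture for M-points} against each of the four combinatorial multiplicity sets $\fM$. The genuine content is checking that the weak Campana condition $\sum_i w_i/m_i \geq 1$ produces a new extremal ray for every tuple $\bfw$ with $\sum_i w_i/m_i = 1$ and every irreducible component $c$ of $\bigcap_{w_i>0}D_i$, and that the stronger (geometric) Campana and Darmon conditions suppress all such rays except those coming from single-component tuples $\bfw = w_i \bfe_i$ (which are already captured by $\Pic(X)$) or, in the non-geometric case, those coming from Galois-stable collections. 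Once this combinatorial identification is done, the Galois descent statement $\#B/G$ versus $\#B'/G$ is the standard fact that rational extremal rays correspond to Galois orbits of geometric rays, and the rest of the argument is bookkeeping with the direct sum decomposition.
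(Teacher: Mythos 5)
Your outline captures the right combinatorial picture but rests on a direct-sum decomposition that does not exist as stated, and it silently skips the crucial ``reduction to small generators'' step that the paper uses to make the argument go through.

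Concretely, you claim $\Pic(X,M)_{\mathbb{R}}=\pr_M^*\Pic(X)_{\mathbb{R}}\oplus V_M$ with $V_M=0$ for geometric Campana points. This is false: for geometric Campana points the set of generators is $\Gamma_M=\{m_1\bfe_1,\dots,(2m_1-1)\bfe_1,\dots,(2m_n-1)\bfe_n\}$, so by Proposition \ref{prop: rank pic for proper pair} one has $\rank\Pic(X,M)=\rank\Pic(X)+\sum_i(m_i-1)$, and those extra dimensions \emph{are} present in $\Pic(X,M)$. They contribute nothing to the $b$-invariant not because the vector space is unchanged, but because the concavity of $\mu$ (Lemma \ref{lemma: concave}) forces every effective representative of the adjoint class to have a strictly positive coefficient on every $\tilde{D}_{\bfw,c}$ with $\sum_i w_i/m_i>1$, so those generators always lie in the minimal face of $\overline{\Eff}^1(X,M)$. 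That is precisely the content of Lemma \ref{lemma: reduction to small generators}, and it is the load-bearing step: without it neither the identification $K_{(X,M)}=\pr_M^*(K_X+D_{\bfm})$ (which only holds after reducing $\Gamma_{M,\cC}$ to the hyperplane $\sum w_i/m_i=1$) nor the claim that only the hyperplane strata contribute to the $b$-invariant has a proof. Your proposal asserts both without justification.

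Relatedly, the paper never constructs a splitting $\pr_M^*\Pic(X)_{\mathbb{R}}\oplus V_M$; it instead works with the quotient spaces $\Pic(X)_{\mathbb{R}}/\langle\mathcal{F}\rangle\to\Pic(X,M)_{\mathbb{R}}/\langle\tilde{\mathcal{F}}\rangle$ and computes the cokernel of the induced map, which is more delicate but also correct. You would also need to note that the non-geometric Campana/Darmon and weak Campana cases of Proposition \ref{prop: b-invariant quasi-Campana} require the assumption $\Eff^1(X)=\overline{\Eff}^1(X)$ (which is where the ``rational polyhedral cone'' hypothesis in the theorem enters), whereas the geometric cases do not; your sketch does not distinguish these. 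Once you install Lemma \ref{lemma: reduction to small generators} (which in turn needs Lemma \ref{lemma: concave}) and replace the direct-sum language with the cokernel computation against the minimal face, your account of which strata survive and the Galois orbit counting is essentially correct, and the Fujita-invariant and rigidity statements then follow along the lines you indicate, matching Propositions \ref{prop: Fujita invariant quasi-Campana} and \ref{prop: rigid quasi-Campana}.
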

This theorem follows from combining Proposition \ref{prop: Fujita invariant quasi-Campana}, Proposition \ref{prop: b-invariant quasi-Campana} and Proposition \ref{prop: rigid quasi-Campana}.\begin{remark}
 The condition on the effective cone of $X$ is quite weak, as it is satisfied if $X$ is a Mori dream space, such as when $X$ is a Fano variety \cite[Corollary 1.3.2]{BCHM10} or a toric variety.
\end{remark}

Theorem \ref{theorem: invariants quasi-Campana} has a variety of consequences. Firstly, it shows that Conjecture \ref{conjecture: Manin Campana} is a special case of Conjecture \ref{conj: generalized Manin conjecture}, provided that the relevant pair $(X,M)$ is rationally connected. This rational connectedness follows from a conjecture of Campana \cite[Conjecture 9.10]{Cam11}, which we will extend in Section \ref{section: quasi-Campana pairs} to other pairs. In Section \ref{section: Compatibility}, we will explain why the Tamagawa constant in our conjecture agrees with the Tamagawa constant in \cite{CLTT24}.

Furthermore, the Theorem \ref{theorem: invariants quasi-Campana} shows that Conjecture \ref{conj: generalized Manin conjecture} predicts a different rate of growth for the counting functions for geometric Campana points and Campana points of bounded height on a Campana pair $(\cX, \cD_{\bfm})$, provided that the set $B'$ appearing in the theorem is nonempty.

Furthermore, the theorem shows that the $b$-invariants for geometric Campana points and Campana points deviate for Campana pairs $(X,D_{\bfm})$, provided that the support of $D_{\bfm}$ is only a geometric strict normal crossings divisor. While for geometric Campana points $b(K(X,M),L)$ is the same as the invariant $b(K,(X,D_{\bfm}),L)$ appearing in Conjecture \ref{conjecture: Manin Campana}, the $b$-in for Campana points $b(K(X,M),L)$ is strictly bigger.

%Furthermore, our conjecture gives an asymptotic for the number of Campana points if the support $\Supp D_{\bfm}$ is only a \textit{geometric} strict normal crossings divisor, i.e. if $\Supp D_{\bfm,\overline{K}}$. This contrasts \cite[Conjecture 8.3]{CLTT24}, which assumes that $\Supp D_{\bfm}$ is a strict normal crossings divisor. The assumption imposed by that conjecture is rather restrictive. For instance, it is not satisfied for toric varieties in general as the boundary $X\setminus U$ of a smooth toric variety $X$ with torus $U$ is only a \textit{geometric} normal crossings divisor.

%However, for Campana pairs $(X,D_{\bfm})$ with $\Supp D_{\bfm}$ geometric strict normal crossings, \cite[Conjecture 1.1]{PSTVA21} predicts the wrong exponent for $\log B$ in general, as shown by Streeter \cite{Str21}. This was the reason for Shute and Streeter to introduce geometric Campana points, and they have shown that \cite[Conjecture 8.3]{CLTT24} does apply to geometric Campana points on toric varieties when $L$ is the log-anticanonical divisor. Our conjecture implies that this should hold more generally.
	
	\begin{remark}
		As described in \cite[\S 8]{Moe24}, Darmon points correspond to integral points on the corresponding root stack. Conjecture \ref{conj: generalized Manin conjecture} therefore gives a prediction for the asymptotic number of integral points of bounded height on a root stack $(\cX,\sqrt[\bfm]{\cD})$, where the height is induced by any metrized big and nef line bundle on $X$. This is related to the conjecture by Ellenberg, Satriano and Zureick-Brown on the number of rational points of bounded height on stacks \cite[Conjecture 4.14]{ESZB23} and its generalization by Darda and Yasuda \cite[Conjecture 9.16]{DaYa24-general}. Their conjectures use different heights however: the heights we consider are what Darda and Yasuda call unstable heights, while their conjecture uses stable heights instead.
	\end{remark}
Let us also remark some differences with the previous conjectures.
	\begin{remark}
		In contrast to Conjecture \ref{conjecture: Manins conjecture} and Conjecture \ref{conjecture: Manin Campana}, we do not assume that the set of $\cM$-points in Conjecture \ref{conj: generalized Manin conjecture} is not thin, but we only require that it is Zariski-dense. The reason for this is that by \cite[Theorem 1.1]{Moe25toric}, Conjecture \ref{conj: generalized Manin conjecture} holds for every smooth proper toric pair over $\mathbb{Q}$, even though there are many such pairs $(X,M)$ for which the set of $\cM$-points is thin by \cite[Theorem 6.5]{Moe24}. Furthermore, there are no known examples of rationally connected varieties with a thin, but nonempty, set of rational points. Any such example would contradict an open conjecture by Colliot-Thélène \cite[Conjecture 3.5.8]{Ser16}, \cite{CoT88}. %Conjecture is made in a letter to Ekendahl.
	\end{remark}
	\begin{remark}
		In Conjecture \ref{conjecture: Manin Campana}, the assumption is made that the integral model $\cX$ is regular, which we do not assume in Conjecture \ref{conj: generalized Manin conjecture}. We suspect that this assumption is superfluous, supported by the results by Santens \cite[Theorem 1.1]{San23-2} providing an analogue of Manin's conjecture for integral points on toric varieties with respect to \textit{any} flat integral model.
	\end{remark}
	\subsection{Evidence for the conjecture}
	In \cite{Moe25toric}, we prove Conjecture \ref{conj: generalized Manin conjecture} for split toric varieties, providing the first results on $\cM$-points of bounded height in general.
	\begin{theorem}[{\cite[Theorem 1.1]{Moe25toric}}]
		Let $(X,M)$ be a smooth proper pair over $\mathbb{Q}$, where $X$ is a smooth proper split toric variety and the divisors $D_1,\dots,D_n$ are torus-invariant. Let $\cX$ be the $\mathbb{Z}$-integral model of $X$ induced by the fan of $X$ and let $(\cX,\cM)$ be the corresponding integral model of $(X,M)$. Then Conjecture \ref{conj: generalized Manin conjecture} is true for $(X,M)$ for any big and nef divisor class $L$ on $X$ with toric metrization $\cL$ (as defined in \cite[Theorem 2.1.6]{BaTs96}).
	\end{theorem}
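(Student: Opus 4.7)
The plan is to adapt the Batyrev--Tschinkel harmonic analysis approach for Manin's conjecture on split toric varieties to the setting of $\cM$-points. The starting point is the Cox parametrization: since $X$ is split, the points of $X(\mathbb{Q})$ are in bijection with tuples $(a_\rho)_{\rho\in\Sigma(1)}\in \mathbb{Z}^{\Sigma(1)}$ indexed by the rays of the fan $\Sigma$, subject to a Stanley--Reisner type coprimality condition and modulo the action of the character lattice of the Picard torus. Under this parametrization, each torus-invariant boundary divisor $D_\rho$ is cut out by the coordinate $a_\rho$, so the $\cM$-condition translates into a purely local constraint at each prime $p$: the tuple $(v_p(a_\rho))_\rho$ must lie in a prescribed set $\fM_p\subset\mathbb{N}^{\Sigma(1)}$ extracted from $M$. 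The toric height attached to $\cL$ also factors as a monomial in the local absolute values of the $a_\rho$, reducing the counting problem to that of integer tuples in a semi-algebraic region subject to a product of local constraints.

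Next I would introduce a multivariate height zeta function
$$Z(\bfs) \;=\; \sum_{P\in(\cX,\cM)(\mathbb{Z})} \prod_\rho |a_\rho(P)|^{-s_\rho}$$
and use Möbius inversion against the coprimality condition to express it as an Euler product of local factors, each of which is a geometric-series-type sum over $\fM_p$ yielding a rational function in the variables $p^{-s_\rho}$. Harmonic analysis on the adelic Cox torus, following Batyrev--Tschinkel and Chambert-Loir--Tschinkel, then realizes $Z$ as an integral of Fourier transforms of these local factors over characters of the idele class group; a careful isolation of the characters contributing dominant poles, combined with a Tauberian theorem, yields the asymptotic $c\,B^{a((X,M),L)}(\log B)^{b(K,(X,M),L)-1}$.

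The hard part, in my view, is twofold. First, one must identify the analytic pole order produced by this Fourier-theoretic analysis with the $b$-invariant defined via $\overline{\Eff}^1(X,M)$ in Section \ref{section: Manin's conjecture for M-points}; in the Campana and Darmon cases this should recover the combinatorics of Theorem \ref{theorem: invariants quasi-Campana}, while the weak Campana case is particularly delicate because the local factors acquire genuinely new poles at characters indexed by tuples $\bfw$ with $\sum w_i/m_i=1$, producing exactly the extra $\#B/G$ contribution predicted there. Second, matching the constant $c$ with the Tamagawa constant of Section \ref{section: leading constant general} requires computing each local density against a measure twisted by the $\cM$-condition and recognizing the resulting adelic integral as the one prescribed by the conjecture; the hypothesis that $\pi_1(X^{\circ}_{\overline{\mathbb{Q}}},M^{\circ})$ is abelian should enter here, ensuring that the Brauer-type correction admits a clean adelic description. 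The excision of the thin set $Z$ is comparatively mild in the split toric setting, since the accumulating subvarieties correspond to subfans of $\Sigma$ and can be removed directly at the level of the Cox parametrization without disturbing the main term.
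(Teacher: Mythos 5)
The paper itself does not prove this theorem: it is stated as a result of the forthcoming paper \cite{Moe25toric}, and the present article contains no proof of it, so there is no in-paper argument to compare your proposal against. That said, your sketch follows what is almost certainly the approach taken there. The Cox parametrization of a split toric variety, the translation of the $\cM$-condition into a local constraint on $(v_p(a_\rho))_\rho$ at each prime, the multivariate height zeta function with M\"obius inversion against the Stanley--Reisner coprimality condition, and harmonic analysis over automorphic characters followed by a Tauberian theorem --- this is precisely the Batyrev--Tschinkel/Chambert-Loir--Tschinkel machinery that Pieropan--Schindler \cite{PiSc23}, Shute--Streeter \cite{ShSt24}, and Santens \cite{San23-2} deployed for Campana, geometric Campana, and integral points on toric varieties, and the paper's remark on the generalized Peyre $\alpha$-constant explicitly says this is what \cite{Moe25toric} uses. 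Your identification of the two principal difficulties (matching the analytic pole order against the $b$-invariant defined via $\overline{\Eff}^1(X,M)$, and recognizing the constant as the Tamagawa constant of Section 7) is also the right diagnosis.

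What your proposal does not supply is the actual content of those hard steps, and as it stands it is an outline rather than a proof. The Euler product of local factors built from a general $\fM_p$ must be regularized against an explicit $L$-function (the one attached to $\Pic(X^\circ_{\overline{\mathbb{Q}}},M^\circ)$), its meromorphic continuation past the boundary of absolute convergence must be established, the precise set of characters contributing to the leading pole must be isolated and matched against $\Br_1(X^\circ,M^\circ)/\Br\,\mathbb{Q}$ via Lemma 6.4, and the archimedean and non-archimedean local density computations must be shown to reproduce the adelic integral in Definition 7.6 rather than some other constant. None of these are formal; in particular the weak Campana case, which you flag, requires genuinely new local computations because the local factor is not a product over the rays. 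One minor misstatement: you say the abelian hypothesis on $\pi_1(X^\circ_{\overline{\mathbb{Q}}},M^\circ)$ ``should enter here'' --- for a split toric pair this is automatically satisfied, since $X^\circ$ contains the open torus and the relevant fundamental group is a quotient of an abelian group; it is not a hypothesis one needs to invoke, but a structural feature one gets for free.
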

In Section \ref{section: Compatibility}, we give further evidence for the conjecture by showing that the conjecture is compatible with the the various results on $\cM$-points of bounded height \cite{Str21,ShSt24, Ito25, Ara25}. In Theorem \ref{Theorem: conjecture compatible with norm forms} we give an asymptotic formula for the number of elements with $m$-full norm in any Galois extension of $\mathbb{Q}$, under assumption of Conjecture \ref{conj: generalized Manin conjecture}. Furthermore, we show that this asymptotic agrees with the result by Streeter \cite[Theorem 1.1]{Str21}, whenever the hypotheses of his result are satisfied. In order to compare our leading constant with the one found by Streeter, we prove Lemma \ref{lemma: Brauer group toric pair}, giving a description of the algebraic Brauer group of pairs $(X,M)$ over toric varieties in terms of automorphic characters.

\subsection{Structure of the paper}
In Section \ref{section: pairs and M-points}, we introduce pairs and $\cM$-points, as well as various special cases of these such as Campana points. In Section \ref{section: Picard group pair}, we introduce the Picard group of a pair, as well as its effective cone and its canonical divisor. In the same section, we also prove several results to aid in the computation of the Fujita invariant and the $b$-invariant. In Section \ref{section: Manin's conjecture for M-points} we define when a pair is rationally connected, and motivate its role in Conjecture \ref{conj: generalized Manin conjecture}. Section \ref{section: quasi-Campana pairs} is devoted to the study of the log-canonical divisor and quasi-Campana pairs, which are pairs loosely resembling the pairs for Campana points. For such pairs, we compute the Fujita invariant and the $b$-invariant, yielding Theorem \ref{theorem: invariants quasi-Campana} as a special case. In Section \ref{section: fundamental group and Brauer group}, we introduce the fundamental group and the Brauer group of a pair, and we study the latter for pairs over toric varieties. In Section \ref{section: leading constant general}, we define the leading constant in Conjecture \ref{conj: generalized Manin conjecture} using the Brauer group from the previous section. Finally, in Section \ref{section: Compatibility}, we prove that the previous results known on $\cM$-points of bounded height are compatible with Conjecture \ref{conj: generalized Manin conjecture}.

\subsection*{Acknowledgments}
This article is based on my PhD thesis \cite{Thesis}, which I wrote at Utrecht University under the supervision of Marta Pieropan. I would like to thank her for her unwavering support for my work and her various suggestions. I would also like to thank Alec Shute, Sam Streeter, Tim Santens and Damaris Schindler for the useful discussions we had during the workshop on Campana points on toric varieties in Bristol, as well as on other occasions. Finally, I would like to thank Sam Streeter, Tim Santens and Dan Loughran for discussions on the leading constant.  
	\section{Notation}
	%We use the convention that $\subset$ represents the subset relation, not necessarily strict.
	\subsection{Natural numbers}
	We use the convention that the set of natural numbers $\mathbb{N}$ contains $0$ and we write $\mathbb{N}^*$ for the set of nonzero natural numbers. We also define the set of extended natural numbers $\lineN:=\mathbb{N}\sqcup \{\infty\}$.% as the one point compactification of the discrete space $\mathbb{N}$. We use the convention that $\frac{1}{\infty}=0$. The topology on $\lineN$ is the topology such that the map $\lineN\rightarrow \mathbb{R}$ given by $n\mapsto \frac{1}{n+1}$ is a homeomorphism onto its image.
	%We extend the greatest common divisor function to allow its arguments to lie in $\lineN$ by setting $\gcd(\infty, a_1,\dots, a_n)=\gcd(a_1,\dots, a_n)$ and $\gcd(\infty)=0$.
	
	\subsection{Algebra and analysis}
	We typically denote vectors using boldface and write their components using a normal face together with an index. For example, we may write $\mathbf{s}=(s_1,\dots,s_n)$ for a vector in $\mathbb{R}^n$. For two vectors $\mathbf{a}, \mathbf{b}\in \mathbb{R}^n$, we write $\mathbf{a}>\mathbf{b}$ if $a_i>b_i$ for all $i=1,\dots,n$. We also denote the $i$-th basis vector of $\mathbb{R}^n$ by $\mathbf{e}_i$.
	
	For an abelian group $G$, we write $G_{\mathbb{Q}}=G\otimes_{\mathbb{Z}} \mathbb{Q}$ and $G_{\mathbb{R}}=G\otimes_{\mathbb{Z}} \mathbb{R}$ for its base change to $\mathbb{Q}$ and $\mathbb{R}$, respectively. We denote its dual by $G^\vee=\Hom(G,\mathbb{Z})$. For a symbol $D$ we write $\mathbb{Z}(D)$ for the group isomorphic to $\mathbb{Z}$ with generator $D$, and similarly we write $\mathbb{Q}(D)\cong \mathbb{Q}$ for the vector space with generator $D$. If $A$ is a set on which a group $G$ acts, then we denote its set of fixed points by $A^G$.
	
	The logarithm $\log$ refers to the natural logarithm.
	%For two nonnegative functions $f,g$ the notation $f(x)\ll g(x)$ means that there exists a constant $c$ such that $f(x)\leq cg(x)$ for all $x$ in the common domain of the functions $f$ and $g$.

	\subsection{Number fields and local fields} \label{section: PF fields}
	For a number field $K$, we denote its set of places by $\Omega_K$ and its set of finite places by $\Omega_K^{<\infty}$. For a place $v\in \Omega_K$, we denote by $K_v$ the completion of $K$ with respect to the absolute value $|\cdot|_v$.
	For a finite place $v\in \Omega_K$, we set $\cO_v$ to be the ring of $v$-adic integers
	$$\cO_v=\{x\in K_v\mid v(x)\geq 0\}.$$ For an infinite place $v$, we simply set $\cO_v=K_v$.
	
	Furthermore, for a finite set $S\subset \Omega_K$ containing the infinite places, we set $\cO_S$ to be the ring of all $x\in K$ which are integral with respect to all places not in $S$. In particular, if $S$ is the set of infinite places, then $\cO_K:=\cO_S$ is the ring of integers of the number field.
	%If $S$ is a finite set of places containing $\Omega_K^\infty$, we define $B=B(S)$ as $B=\spec \cO_S$ if $K$ is a number field, where $\cO_S$ is the ring of $S$-integers, and $B=C\setminus S$ if $K$ is a function field. In particular, we see that $B$ is affine if and only if $S$ is nonempty.
	%We call such a scheme $B$ a \textit{model} of $K$.
	\subsection{Geometry}
For a field $k$ we write $\overline{k}$ for a choice of an algebraic closure.
For a scheme $X$ over a base scheme $S$ and a morphism $S'\rightarrow S$ of schemes, we denote the base change by $S'$ as $X_{S'}:=X\times_S S'$. If $S'=\spec R$, we also write $X_{R}$ in place of $X_{S'}$.

%Given a $\mathbb{Q}$-Weil divisor $D=\sum_i a_iD_i$ on $X$ we define its floor as $\lfloor D\rfloor=\sum_i \lfloor a_i\rfloor D_i$.
If $D$ is an effective Cartier divisor on $X$, then we will routinely identify it with the closed subscheme of $X$ defined by the sheaf of ideals $\cO_X(-D)\subset \cO_X$.% If $D_1$ and $D_2$ are Cartier divisors, then the closed subscheme $D_1+D_2$ is defined by the ideal sheaf $\cI=\cO_X(-(D_1+D_2))\subset \cO_X$.

We define a variety over a field $k$ to be a separated geometrically integral scheme of finite type over $k$, and a curve to be a variety of dimension $1$.
%\begin{definition} \label{def: integral model}
%Let $K$ be a number field, let $X$ be a proper variety over $K$ and let $S$ be a finite set of places of $K$ including the infinite places. An \textit{integral model} of $X$ over $\cO_S$ is a proper $\cO_S$-scheme $\cX$ together with an isomorphism $\cX_K\cong X$.
%\end{definition}

%All fundamental groups considered in this article are étale fundamental groups.
%If $B$ is a Noetherian integral scheme with fraction field $K$ and $v\in B$ is a closed point, then we denote the completions of $K,\, B$ at $v$ by $K_v,\,  B_v=\spec(\cO_v)$, respectively.	
	
	\section{Pairs and \texorpdfstring{$\cM$}{M}-points} \label{section: pairs and M-points}
	In this section we introduce pairs and the $\cM$-points on such pairs. This notion will be based on the notion of pair previously considered in \cite{Moe24, Thesis}. In this article, we work with a more general notion. The differences are are as follows:
	\begin{enumerate}
		\item The parameter set $M$ does not only describe which intersection multiplicities $\cM$-points should have, it also can impose to which intersection strata these multiplicities should correspond.
		\item The divisors $D_1,\dots,D_n$ used to define the pair only need to be defined over a Galois extension of the base field, rather than over the base field itself. In this case, we use Galois descent to define $\cM$-points and the geometric invariants of the pair. %If the divisors are defined over the ground field, then we call the pair split.
	\end{enumerate}
	The pairs considered in \cite{Moe24, Thesis} are the pairs which are split and divisorial, as defined in this section. There are several reasons for considering more general pairs than these. For instance, on a non-split smooth toric variety, the torus-invariant prime divisors are not smooth in general, but their geometric components are smooth. This is significant, as our generalization of Manin's conjecture requires the divisors $D_1,\dots,D_n$ defining the pair to be smooth.
 %We start by introducing pairs, which are pairs for which the boundary divisors are already defined over the base field. These pairs are the pairs considered in \cite{Moe24, Thesis}.
	\subsection{Pairs} \label{Section: split pairs}
	In this section we will introduce pairs and their $\cM$-points. We will introduce these notions in large generality, using the language of scheme theory. While we are mainly interested in $\cM$-points over rings of integers, we will also need to consider $\cM$-points over $\mathbb{P}^1$ in Definition \ref{def: rationally connected pair} to define what it means for a pair to be rationally connected. Furthermore, the general framework presented here has nice functorial properties as seen in Remark \ref{remark: (X,M) as a functor}.

	In general, we want to consider divisors which need not be not defined over the base, but only over a Galois cover. Let us recall the definition of a Galois cover as in \cite[Tag 03SF]{Stacks}, generalizing Galois extensions of a field.
	\begin{definition}
		%Let $B$ be a scheme. A morphism of schemes $\widetilde{B}\rightarrow B$ is a \textit{Galois cover} if it is a $G$-torsor for finite group $G$ (viewed as a constant group scheme).
		Let $B$ and $A$ be connected schemes. Then a finite étale morphism $A\rightarrow B$ is a \textit{Galois cover} with group $G=\mathrm{Aut}_B(A)$ if the degree of $A\rightarrow B$ is equal to $\#G$.
	\end{definition}
	%Note that this definition differs slightly from the usual definition of a Galois cover \cite[Tag 03SF]{Stacks}, in that we do not assume $B$ or $\widetilde{B}$ to be connected.
	In this article, we will consider two types of Galois covers: Galois extensions of the ground field and Galois extensions of the ring of $S$-integers. If $E/K$ is an Galois extension of number fields and $S\subset \Omega_K$ is a finite collection of places including the infinite places as well as the places which ramify in the extension, then both $\spec E\rightarrow \spec K$ as well as $\spec \cO_{\tilde{S}}\rightarrow \spec \cO_{S}$ are Galois covers, where $\tilde{S}\subset \Omega_E$ is the set of places in $E$ above $S$.
\begin{assumption}
	For the remainder of the section, all schemes will be Noetherian schemes over $B$, and we let $A\rightarrow B$ be a fixed Galois cover with Galois group $G$.
\end{assumption}

	$\cM$-points are defined using local intersection multiplicities, which we introduce now.
	\begin{definition}
		Let $P\colon Y\rightarrow X$ be a morphism of schemes and let $D$ be an effective divisor on $X_{A}$. For any prime (Cartier) divisor $v$ on $Y_{A}$, let $\cO_v$ be the local ring of $Y_{A}$ at the generic point of $v$, and let $\pi_v$ be an uniformizer. Let $P_v\colon \spec \cO_v\rightarrow X$ be the morphism obtained by composing $P$ with the natural morphism $\spec \cO_v\rightarrow Y$.
		We define the \textit{(local) intersection multiplicity} of $P$ and $D$ at $v$ to be the extended natural number $n_v(D,P)\in \lineN$ such that the fiber product
		% https://tikzcd.yichuanshen.de/#N4Igdg9gJgpgziAXAbVABwnAlgFyxMJZABgBpiBdUkANwEMAbAVxiRAB1240YBjAAk68A8gH0anPAFt4ogBr8AIiAC+pdJlz5CKAEzkqtRizbK1G7HgJF9ARkP1mrRCDmr1IDJe1Fbpe9SOJi4Amu4WWtYoZAFGTmyc3HyC7CLiqoYwUADm8ESgAGYAThBSSGQgOBBItuYgxaU11FVI+pV0WAxsABYQEADW4fUlZYgAzM3ViG0MdABGMAwACppWOiAMMAU4IIHGziBLQw2jACyTSBMgC2BQ5XUn5ReI5+2dPX2DDyNIry3T1Bud0QAFoxhUggclulqLMFstVj4XJtthkVEA
		$$\begin{tikzcd}
			\spec \cO_v\times_X D \arrow[rr] \arrow[d, hook]    &                  & D \arrow[d, hook] \\
			\spec \cO_v \arrow[r] \arrow[rr, "P_v", bend left] & Y \arrow[r, "P"] & X                
		\end{tikzcd}$$
		is given as
		 $\spec \cO_v\times_X D=\spec \cO_v/(\pi_v)^{n_v(D,P)}$, where we set $(\pi_v)^\infty=0$.
		Similarly, for an irreducible component $v$ of $Y_A$, we set
		$$n_v(D,P)=\begin{cases}
			0 & \text{ if }P(v)\not\in D
			\\ \infty & \text{ if } P(v)\in D
		\end{cases}$$
		%Let $P\colon \spec \cO_v \rightarrow X$ be a morphism of schemes, where $\cO_v$ is a discrete valuation ring with maximal ideal $(\pi)$, and let $D$ be an effective divisor on $X$. Let the \textit{(local) intersection multiplicity} of $P$ and $D$ at $v$ to be the extended integer $n_v(D,P)\in \lineN$ such that $\spec \cO_v\times_X D=\spec \cO_v/(\pi)^{n_v(D,P)}$. Here we used the convention that $(\pi)^\infty=0$. More generally, given any morphism of schemes $P\colon Y\rightarrow X$ and a prime (Cartier) divisor $v$ on $Y$, we set
	\end{definition}
	\begin{remark}
		The reason for the symbol $v$ for the divisor on $Y_A$ is because we often apply the definition when $X$ is the integral model of some variety over a number field $K$ and $A=Y=\spec \cO_K$. In this case $P$ is an integral point on $X$, $v$ corresponds to a prime ideal $\mathfrak{p}$ of $\cO_K$, and $n=n_v(D,P)$ is the maximal integer such that $P$ reduces to $D$ modulo $\mathfrak{p}^n$ (or $\infty$ if $P$ already lies on $D$).
	\end{remark}
	Note that if the image of $P$ is contained in $D$, then $n_v(D,P)=\infty$ for every Cartier divisor $v$ on $Y$. If $D$ is Cartier, the image of $P$ is not in $D$ and $Y$ is integral, then $P^{-1}D$ is the pullback divisor $P^*D$ \cite[Tag 02OO]{Stacks} and $n_v(D,P)$ is the coefficient of $v$ in $P^*D$.
	\begin{remark}
		Our definition of intersection multiplicity is an extension of the classical notion of local intersection multiplicity on varieties. If $X$ is a smooth surface over an algebraically closed field and $P\colon C\hookrightarrow X$ is a smooth curve not contained in a divisor $D\subset X$, then $$C\cap D= \sum_{\text{closed points }v\in C}n_v(D,P)v.$$ Thus $n_v(D,P)$ is the local intersection multiplicity of $C$ and $D$ in a point $v$ as defined in \cite[Chapter V]{Har77}.
	\end{remark}

Let us consider intersection multiplicities in a concrete example.
	\begin{example} \label{example: two intersections}
		If $X=\mathbb{A}_{\mathbb{Z}}^2$, $D_1=\{y=0\}$, and $D_2=\{y=x^2-1\}$, then an integral point $P=(a,b)\in \mathbb{Z}^2$ satisfies $n_p(D_1,P)=v_p(b)$ and $n_p(D_2,P)=v_p(b-a^2+1)$ for every prime number $p$, where $v_p$ is the $p$-adic valuation. In particular, it follows that $n_p(D_1,P), n_p(D_2,P)>0$ if and only if $b=0\bmod p$ and $a=\pm 1 \bmod p$. The intersection multiplicities at $p$ tells us whether the point reduces to $D_1\cap D_2=\{(1,0),(-1,0)\}$ modulo $p$. However, it does not tell us to which of the two points it reduces to. In order to keep track of this, we introduce the following notation.
	\end{example}
	\begin{definition} \label{definition: components}
		Let $X$ be a scheme, and let $\{D_1,\dots,D_n\}$ be a set of distinct effective divisors on $X_A$ which is closed under the action of $G$ on divisors. For $\bfm\in \lineN^n$, let $\cC_{\bfm}$ be the set of irreducible components of $\bigcap_{\substack{i=1 \\ m_i>0}}^n D_i$. For any $\bfm\in \lineN^n$, we call $c\in \cC_\bfm$ a \textit{stratum}.

We write $\lineN^n_{\cC}$ for the set of tuples $(\bfm,c)$, where $\bfm\in \lineN^n$ and $c\in \cC_{\bfm}$. The action of $G$ on $\{D_1,\dots,D_n\}$ induces an action on $\lineN^n$ by permuting the coordinates, and thus $G$ acts on $\lineN^n_{\cC}$ by $\sigma(\bfm,c)=(\sigma(\bfm),\sigma(c))$ for all $\sigma\in G$.

If for $\bfm\in \lineN^n$ there is a unique component $c\in \cC_{\bfm}$, then we routinely identify $\bfm$ and $(\bfm,c)$. Finally, for a subset $\fM\subset \lineN^n$, we write $\fM_{\red}$ for the set of all $\bfm\in \fM$ such that $\cC_{\bfm}\neq \emptyset$.

		% of nonzero effective divisors such that $\sum_{i=1}^n D_i$ is a normal crossings divisor we write
	\end{definition}

The strata $c$ in Definition \ref{definition: components} correspond to the faces of the corresponding Clemens complex (see e.g. \cite{San23-2}) whenever $X$ is a variety and $\sum_{i=1}^n D_i$ is a normal crossings divisor.
	%\begin{remark}
	%The main reason that we consider irreducible components, rather than connected components is that two irreducible components can intersect in a special fiber. In the previous example $D_1\cap D_2$ is connected (but not irreducible) because the points $(1,0)$ and $(-1,0)$ are the same modulo $2$, so the special fiber $(D_1\cap D_2)_{\mathbb{F}_2}$ is connected. even though $(D_1\cap D_2)_{\mathbb{Q}}$ is not connected. On the other hand, the irreducible components of $D_1\cap D_2$ agree with the components of $(D_1\cap D_2)_{\mathbb{Q}}$.
	%\end{remark}
	
	Now we will introduce the augmented multiplicity, which keeps track of both the intersection multiplicities as well as to which strata the point reduces to.
	\begin{definition}
		Let $X$ be a scheme and let $D_1,\dots,D_n$ be a finite tuple of effective Cartier divisors on $X_A$. For any morphism $P\colon Y\rightarrow X$ and any prime (Cartier) divisor or irreducible component $v\subset Y_A$ we define the \textit{multiplicity} of $P$ at $v$ to be
		$$\mult_v(P):=(n_v(D_1,P),\dots, n_v(D_n,P))\in \lineN^n,$$
		and the \textit{augmented multiplicity} to be
		$$\mult^\cC_v(P):=\{(\mult_v(P), c)\in \lineN^n_{\cC}\mid P(v)\in c\}.$$
		%where $c\in \cC_{\mult_v(P)}$ is the unique component of $$\bigcap_{\substack{i=1 \\ n_v(D_i,P)>0}}^n D_i$$
		%containing the image of $v$ in $X$.
	\end{definition}
	
	\begin{remark}
		For many choices of divisors $D_1,\dots,D_n$, the intersection $\bigcap_{i\in I} D_i$ is irreducible or empty for every $I\subset \{1,\dots,n\}$. In this case, the multiplicity of a point $P$ at a divisor $v$ completely determines the augmented multiplicity. For instance, this property is satisfied in the following example.
	\end{remark}
	
	\begin{example} \label{example: multiplicity projective space}
		Let $X=\P^{n-1}_{\mathbb{Z}}$ and let $D_i$ be the $i$-th coordinate hyperplane. For every prime number $p$, the multiplicity is given on $p$-adic points on $X$ by $\mult_p(a_1:\dots: a_n)= (v_p(a_1),\dots,v_p(a_n))$, provided we normalize the coordinates such that $a_1,\dots,a_n\in \mathbb{Z}_p$ and such that at least one coordinate has valuation $1$.
	\end{example}
	The previous example has a straightforward generalization to toric varieties, see e.g. \cite[\S 5]{Moe24}.
	
	%The assumption that $\sum_{i=1}^n D_i$ is a normal crossings divisor ensures that the irreducible components of $\bigcap_{\substack{i=1 \\ n_v(D_i,P)>0}}^n D_i$ are also its connected components by \cite[Tag 0BIA]{Stacks} 
	\begin{example}
		Let us compute $\mult^\cC_v(P)$ in Example \ref{example: two intersections} for integral points $P=(a,b)\in \mathbb{Z}^2$. For every prime number $p$ we write $\bfm_p=(v_p(b),v_p(b-a^2+1))$, and we will compute the augmented multiplicity of $P$. We consider several cases based on the multiplicity:
		\begin{itemize}
			\item If $v_p(b)=v_p(b-a^2+1)=0$, then $\mult_p^{\cC}(a,b)=\{(\bfm_p, X)\}$.
			\item If $v_p(b)>0$ and $v_p(b-a^2+1)=0$, then $\mult_p^{\cC}(a,b)=\{(\bfm_p, D_1)\}$,
			\item If $v_p(b)=0$ and $v_p(b-a^2+1)>0$, then $\mult_p^{\cC}(a,b)=\{(\bfm_p, D_2)\}$,
			\item If $v_p(b),v_p(b-a^2+1)>0$ and $p$ is odd, then
			$$\mult_p^{\cC}(a,b)=\big\{\big(\bfm_p, \{(\overline{a},0)\}\big)\big\},$$
			where $\overline{a}\in \{1,-1\}$ is the reduction of $a$ modulo $p$.
			\item If $v_p(b),v_p(b-a^2+1)>0$ and $p=2$, then $$\mult_2^{\cC}(a,b)=\big\{\big(\bfm_2, \{(1,0)\}\big), \big(\bfm_2, \{(-1,0)\}\big)\big\}.$$
		\end{itemize}
		The reason why both components of $D_1\cap D_2$ appear in the last case is because $-1=1$ in $\mathbb{F}_2$, so $(D_1\cap D_2)_{\mathbf{F}_2}=\{(1,0)\}$.
	\end{example}
	The previous example also gives an example of two divisors $D_1,D_2$ such that the intersection $D_1\cap D_2$ is connected, but consists of two irreducible components.
	
	Now we are ready to define split pairs and $M$-points on them.
	\begin{definition} \label{def: split pair}
		Let $X$ be a scheme over a base scheme $B$, and let $D_1,\dots,D_n$ be effective divisors on $X_A$ as in Definition \ref{definition: components}. Let $\fM_{\cC}\subset \lineN^n_{\cC}$ be a subset closed under the action of $G$ containing $(\mathbf{0},X)$ and write $M=((D_1,\dots,D_n), \fM_{\cC})$. We call $(X,M)$ a \textit{pair} over $B$. Furthermore, we call $(X,M)$ \textit{split} if $G=1$ (so $A=B$).
		
		Let $Y$ be a regular, Noetherian scheme over $B$. We call a morphism $P\colon Y\rightarrow X$ of schemes over $B$ an \textit{$M$-point over $Y$} if $\mult^{\cC}_v(P)\subset \fM_{\cC}$ for every prime divisor and irreducible component $v\subset Y_A$. We denote the set of $M$-points over $Y$ by $(X,M)(Y)$.
Two pairs $(X,M)$ and $(X,M')$ are \textit{equivalent} if they have the same set of $M$-points for all such schemes $Y$. Similarly, for an open subscheme $X'\subset X$, we write $(X',M')\subset (X,M)$ if the inclusion holds for all such $Y$.
		If $A\rightarrow B$ corresponds to a Galois extension $E/K$ of fields, then we say that $E$ is the \textit{splitting field} of $(X,M)$.
	\end{definition}
The field $E$ in Definition \ref{def: split pair} is called as such since the base change $(X_E,M)$ is a split pair over $E$.
	\begin{remark}
		If $Y$ is irreducible, then the condition on the irreducible components $v$ of $Y_A$ is automatically satisfied if the image of $Y_A$ is not contained in any of the divisors.
	\end{remark}
	\begin{remark}
		The assumption that $Y$ is regular and Noetherian implies that every divisor on $Y$ is a sum of prime Cartier divisors by \cite[Tag 0BCP]{Stacks}. Therefore, the values $\mult_v(P)$ for all $v$ as above together determine the inverse images $P^{-1}D_1,\dots,P^{-1}D_n\subset Y_A$.
	\end{remark}
	
	For many pairs considered in this article, no condition is imposed on the strata $c$ but only on the intersection multiplicity. To make the notation less cumbersome in this case, we introduce the following definition.
	\begin{definition} \label{def: divisorial pair}
		Let $X$ be a scheme and let $D_1,\dots,D_n$ be effective divisors on $X_A$ as in Definition \ref{def: split pair}. For a set $\fM\subset \lineN^n$ we identify $(X,M)$ where $M=((D_1,\dots,D_n), \fM)$ with the pair $(X,M')$, where $M'=((D_1,\dots,D_n), \fM_{\cC})$ and $$\fM_{\cC}=\{(\bfm,c)\mid \bfm\in \fM,\, c\in \cC_{\bfm} \}.$$
		We call such a split pair \textit{divisorial} and we call the set $\fM$ a \textit{set of multiplicities} for $(X,M)$.
		In particular, it follows that a morphism $P\colon Y\rightarrow X$ is an $M$-point if $\mult_v(P)\in \fM$ for every prime divisor and irreducible component $v\subset Y_A$.
	\end{definition}
	The term ``divisorial'' is used, because in this case the criterion for a morphism $P\colon Y\rightarrow X$ to be an $M$-point depends solely on the intersection multiplicities of $P$ with the divisors $D_1,\dots,D_n$, and not on their intersection strata.
	\begin{remark}
		The definition of a divisorial split pair given here strongly resembles the definition of a pair previously used by the author in \cite[Definition 3.1]{Moe24}, but there are two minor differences. In this article, we assume for simplicity that $D_1,\dots,D_n$ are divisors rather than arbitrary closed subschemes. Furthermore, in the other article, we imposed a weak condition on the set $\fM$ ensuring that the $\cM$-points over $\mathbb{Z}$ are a subset of the $\cM$-points over $\mathbb{Q}$. This condition is not necessary in the framework used in this paper, as the $\cM$-point condition used here also takes generic points into account.
	\end{remark}
	The augmented multiplicity $\mult_v^\cC$ can frequently be regarded as a function to $\lineN^n_{\cC}$, as the next remark illustrates. 
	\begin{remark}
		If the divisors $D_1,\dots, D_n$ are chosen such that $D_1+\dots+D_n$ is a strict normal crossings divisor as in \cite[Tag 0BI9]{Stacks}, then \cite[Tag 0BIA]{Stacks} implies that the connected components of the intersections $\bigcap_{i\in I} D_i$ are all irreducible for all nonempty $I\subset \{1,\dots, n\}$. Thus it follows that for any scheme $Y$ over $B$ with a prime divisor (or connected component) $v$, the function $\mult_v^{\cC}$ can be regarded as a function $X(Y)\rightarrow \lineN^n_{\cC}$. In particular, the condition at $v$ for $P\colon Y\rightarrow X$ to be an $M$-point simplifies to $\mult^{\cC}_v(P)\in \fM_{\cC}$.
	\end{remark}
	For any pair $(X,M)$, we introduce a bigger pair $(X,M_{\mon})$, which behaves well with respect to morphisms of schemes.
	\begin{notation} \label{notation: M_mon}
		For a pair $(X,M)$, we let $(X,M_{\mon})$ be the pair $(X,((D_1,\dots,D_n),\fM_{\cC,\mon}))$, where $\fM_{\cC,\mon}\subset \lineN^n_{\cC}$ is the smallest set containing $\fM_{\cC}$ such that for every $(\bfm,c),(\bfm',c')\in \fM_{\cC,\mon}$ we have $(\bfm+\bfm',\tilde{c})\in \fM_{\cC,\mon}$ for all irreducible components $\tilde{c}$ of $c\cap c'$, and furthermore $(\infty \bfm,c)\in \fM_{\cC,\mon}$ (where we use the convention that $\infty \cdot 0=0$). If $(X,M)$ is divisorial with a set of multiplicities $\fM$, then $(X,M_{\mon})$ is also divisorial and the closure of the monoid $\fM_{\mon}$ generated by $\fM$ in $\lineN^n$ is a set of multiplicities for this pair.
	\end{notation}
	\begin{remark} \label{remark: (X,M) as a functor}
		Suppose $(X,M)$ is a pair over a scheme $B$ such that the divisors $D_1,\dots,D_n$ are Cartier divisors. Then the assignment $$Y\mapsto (X,M_{\mon})(Y)$$
		is a functor $(X,M)$ from the category of regular Noetherian schemes over $B$ to the category of sets. This follows from the following observation: if $f\colon Y'\rightarrow Y$ is a morphism of such schemes over $S$ and $P\colon Y\rightarrow X$ is a morphism over $S$, then for every prime divisor $v'$ on $Y'_A$, we have $$\mult_{v'}(P\circ f)= \sum_{v \text{ prime divisor on }Y_A} a_v \mult_{v}(P),$$
		for $a_v\in \lineN$. Here $a_v$ is the multiplicity of $v'$ in the pullback of $v$ to $Y'_A$ if $f(Y')\not\subset v$ and $a_v=\infty$ otherwise. This can be seen as follows: if the image of $P\circ f$ is not contained in an effective Cartier divisor $D\subset X$, then the scheme-theoretic fiber $(P\circ f)^{-1} D=(P\circ f)^* D$ is a Cartier divisor on $Y'$. The desired identity now follows from $(P\circ f)^* D=f^*P^* D$ together with the linearity of $f^*$.
	\end{remark}

\subsection{Pairs over number fields}
	In this article, we are primarily interested in pairs over number fields, and their integral models, which we will now define.
	\begin{definition} \label{def: integral model split pair}
		Let $K$ be a number field with a Galois extension $E$, let $S\subset \Omega_K$ be a finite set of places including the infinite places and the places ramifying in $E$, and let $\tilde{S}$ be the places in $E$ above $S$. Let $X$ be a proper variety over $K$ and let $(X,M)$ be a pair over $K$ with splitting field $E$. A proper scheme $\cX$ over $\cO_S$ together with an isomorphism $\cX_K\cong X$ is called an \textit{integral model} of $X$. For an integral model $\cX$, a pair $(\cX,\cM)=(\cX,((\cD_1,\dots,\cD_n), \fM'_{\cC}))$ with $\cD_1,\cD_n\subset \cX_{\cO_{\tilde{S}}}$ is called an \textit{integral model} of $(X,M)$ if $\cD_{1,E}= D_1,\dots,\cD_{n,E}=D_n$ and $$\fM_{\cC}=\{(\bfm,c)\mid (\bfm,c_K)\in \fM_{\cC}', c_K\neq \emptyset\}.$$
		(Here we used the isomorphism to identify the subschemes of $\cX_K$ with those of $X$.)
	\end{definition}
	\begin{remark}
		Let $(X,M)$ be a pair over a number field $K$, where $X$ is a proper variety. For an integral model $\cX$ of $X$ over $\cO_S$, where $S$ is a finite set of places of $K$, there is a natural choice of an $\cO_S$-integral model $(\cX,\cM)=(\cX,((\cD_1,\dots,\cD_n), \fM'_{\cC}))$ of $(X,M)$. This is given by taking the divisors $\cD_1,\dots,\cD_n$ to be the Zariski closures of the divisors $D_1,\dots,D_n$ in $\cX_{\cO_{\tilde{S}}}$ and by taking $\fM'_{\cC}$ to be the set obtained by replacing the strata $c$ in $\fM_{\cC}$ with their closures $\overline{c}$.
		(Here we take the Zariski closure of a not necessarily reduced scheme to mean the closure as defined in \cite[Proposition 3.6]{Moe24}.)
		
		Despite having this canonical choice of an integral model, it is useful to consider other integral models. This discussed in \cite[\S 3.5]{Moe24}, of which the main takeaway is as follows. If $f\colon \cY\rightarrow \cX$ is a morphism of $\cO_S$-integral models, then one can pull back $\cM$ to get a pair $(\cY,f^{-1}\cM)$ whose points are exactly the preimages of the points on $(\cX,\cM)$, but the pair $(\cY,f^{-1}\cM)$ is not necessarily the canonical integral model for $(Y,f^{-1}M)$.
	\end{remark}
The next proposition implies that a pair over a field $K$ only depends on the divisors $D_{1,\overline{K}},\dots,D_{n,\overline{K}}$ and $\fM_{\cC}$, rather than on the particular Galois extension $E/K$ used to define the pair.% Rather, it only matters what the divisors $D_{1,\overline{K}},\dots,D_{n,\overline{K}}$ are.% In order to make this precise, we introduce the notion of a base change of a pair.
	\begin{proposition}
		Let $\widetilde{B}\rightarrow B$ be a Galois cover of schemes and let $P\colon Y\rightarrow X$ be a morphism of regular schemes over $B$. For any effective divisor $D$ on $X$ and any prime divisor $v$ on $Y$ we have
		$$n_v(D,P)=n_{\tilde{v}}(D_{\widetilde{B}},P_{\widetilde{B}}),$$
		for every prime divisor $\tilde{v}$ contained in the divisor $v\times_B \tilde{B}$.
		
		%	In particular, any pair $(X,M)$ over $B$, defined using divisors $D_1,\dots,D_n$ defined over some Galois cover $\widetilde{B}\rightarrow B$, is equivalent to the pair $(X,M')$ corresponding pair defined using divisors $D_{1,\hat{B}},\dots,D_{n,\hat{B}}$ and .
	\end{proposition}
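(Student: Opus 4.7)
The plan is to reduce everything to a question about local rings and exploit the fact that the Galois cover $\widetilde{B}\to B$ is finite étale, whence the induced map $Y_{\widetilde{B}}\to Y$ is finite étale and the local map $\cO_v\to \cO_{\tilde{v}}$ is a local étale extension of discrete valuation rings. The auxiliary $\hat{B}\to \widetilde{B}$ plays no role for this particular statement (the conclusion only mentions $\widetilde{B}$), so I would ignore it.

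First I would localize: since $Y$ is regular and $v$ is a prime divisor, $\cO_v$ is a DVR, and the same holds for $\cO_{\tilde{v}}$ because étale base change of a regular scheme is regular. The fact that $\widetilde B\to B$ is étale, combined with stability of étale morphisms under base change, gives that the induced morphism $\spec \cO_{\tilde v}\to \spec \cO_v$ is étale. An étale local extension of DVRs is unramified, so $\fm_v \cO_{\tilde v}=\fm_{\tilde v}$; consequently any uniformizer $\pi_v$ of $\cO_v$ remains a uniformizer of $\cO_{\tilde v}$.

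Next I would apply base change to the defining fiber product. Using $D_{\widetilde B}=D\times_B \widetilde B$, associativity of fiber products, and flatness of $\cO_v\to\cO_{\tilde v}$, one gets
\[
\spec \cO_{\tilde v}\times_{X_{\widetilde B}} D_{\widetilde B}\;=\;\spec \cO_{\tilde v}\times_X D\;=\;\spec\!\bigl(\cO_{\tilde v}\otimes_{\cO_v} \cO_v/(\pi_v)^{n_v(D,P)}\bigr)\;=\;\spec \cO_{\tilde v}/(\pi_v)^{n_v(D,P)},
\]
where the middle equality is the definition of $n_v(D,P)$. Since $\pi_v$ is still a uniformizer of $\cO_{\tilde v}$, comparing with the defining equation for $n_{\tilde v}(D_{\widetilde B},P_{\widetilde B})$ yields the claimed equality of intersection multiplicities.

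For the infinite case, $n_v(D,P)=\infty$ means $(\pi_v)^\infty = 0$, i.e. $\spec\cO_v\times_X D=\spec\cO_v$, which translates to the image of the generic point of $v$ lying on $D$; by flatness of the base change this is equivalent to the image of the generic point of $\tilde v$ lying on $D_{\widetilde B}$, and hence to $n_{\tilde v}(D_{\widetilde B},P_{\widetilde B})=\infty$. I do not anticipate a real obstacle: everything here is a routine bookkeeping around two standard inputs, namely that Galois covers are étale and that an étale map between DVRs preserves uniformizers; the only mild subtlety is to track the extended-integer conventions $(\pi)^\infty = 0$ when the map has image in $D$.
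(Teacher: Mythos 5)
Your proposal is correct and follows essentially the same route as the paper: localize to $\spec\cO_v$, dispatch the $n_v(D,P)=\infty$ case separately, and use that $\cO_v\to\cO_{\tilde v}$ is étale (hence preserves uniformizers) to compare the pullback divisors. The paper phrases the last step as invariance of divisor coefficients under étale pullback where you spell it out at the level of uniformizers, but the argument is the same; you also rightly observe that $\hat B\to\widetilde B$ plays no role in the stated conclusion.
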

	\begin{proof}
		%Note that the second statement in the proposition follows directly from the first.
		Without loss of generality, we can assume that $Y=\spec \cO_v$ is a discrete valuation ring. If $n_v(D,P)=\infty$, then the image of $v$ in $X$ lies in $D$, so the image of $\tilde{v}$ lies in $D_{\widetilde{B}}$ so $n_{\tilde{v}}(D_{\widetilde{B}},P_{\widetilde{B}})=\infty$. Thus we can assume that the image of $P$ is not contained in $D$. Then the fibers $P^{-1}D$ and $P_{\widetilde{B}}^{-1} D_{\widetilde{B}}$ are divisors on $Y$ and $Y_{\widetilde{B}}$, and $n_v(D,P)$ and $n_{\tilde{v}}(D_{\widetilde{B}},P_{\widetilde{B}})$ are the coefficients of $v$ and $\tilde{v}$ in these divisors. Since $P_{\widetilde{B}}^{-1} D_{\widetilde{B}}$ is the pullback of $P^{-1}D$ under the étale morphism $Y_{\widetilde{B}}\rightarrow Y$, this implies that $n_v(D,P)=n_{\tilde{v}}(D_{\widetilde{B}},P_{\widetilde{B}})$.
	\end{proof}
	\subsection{Examples of \texorpdfstring{$\cM$}{M}-points} \label{section: examples M-points}
	In this section we give some examples of $\cM$-points, such as integral points, Campana points and Darmon points, as well as their geometric counterparts. For more examples, see \cite[\S 3.3]{Moe24}. We fix a proper variety $X$ over a number field $K$ with an integral model $\cX$ over $\cO_S$ for some finite set of places $S\subset \Omega_K$, and we fix an $\cO_S$-scheme $Y$ (such as the spectrum of $\cO_S$ or $\cO_v$ for a place $v\in \Omega_K\setminus S$). On $X$, we choose prime divisors $D_1,\dots, D_n$ and we denote their Zariski closures in $\cX$ by $\cD_1,\dots, \cD_n$.
	\begin{example} \label{example: integral points}
		If $(\cX,\cM)$ is the pair given by $\fM=\{(0,\dots,0)\}$, then the $\cM$-points over $Y$ on the pair are exactly the integral points over $Y$ on $\cU=\cX\setminus \bigcup_{i=1}^n \cD_i$. More generally, if $(\cX,\cM)$ is the pair given by $\fM=\{\bfm\mid m_i=0\, \forall i\in I\}$ for $I\subset\{1,\dots n\}$, then the $\cM$-points on the pair are the integral points on $\cX\setminus \bigcup_{i\in I} \cD_i$.
	\end{example}
	
	\begin{definition} \label{def: Campana points and Darmon points}
		Let $m_1,\dots,m_n\in \mathbb{N}^*\cup\{\infty\}$ and define the $\mathbb{Q}$-divisors $D_{\bfm}=\sum_{i=1}^n\left(1-\frac{1}{m_i}\right)D_i$ and $\cD_{\bfm}=\sum_{i=1}^n\left(1-\frac{1}{m_i}\right)\cD_i$, where we set $\frac{1}{\infty}=0$. We call the tuples $(X,D_{\bfm})$ and $(\cX,\cD_{\bfm})$ \textit{Campana pairs}.
		There are several sets of $\cM$-points we can associate to a Campana pair:
		\begin{itemize}
			\item \textit{Darmon points on $(\cX,\cD_{\mathbf{m}})$} over $Y$ are the $\cM$-points over $Y$ for $(\cX,\cM)$, where $\fM$ is the collection of $\bf{w}\in \lineN^n$ for which $m_i|w_i$ for all $i\in \{1,\dots,n\}$. Here we use the conventions that the only integer divisible by $\infty$ is $0$ and that $\infty$ is divisible by every positive integer.
			\item \textit{Campana points on $(\cX,\cD_{\mathbf{m}})$} over $Y$ are the $\cM$-points over $Y$ for the pair $(\cX,\cM)$, where $\fM\subset \lineN^n$ is the collection of $\bf{w}\in \lineN^n$ such that for all $i\in \{1,\dots,n\}$ we have
			\begin{enumerate}
				\item $w_i=0$ if $m_{i}=\infty$ and
				\item $w_i=0$ or $w_i\geq m_i$ if $m_i\neq \infty$.
			\end{enumerate}
			\item \textit{Weak Campana points on $(\cX,\cD_{\mathbf{m}})$} over $Y$ are the $\cM$-points over $Y$ for $(\cX, \cM)$, where $\fM$ is the collection of all $\bf{w}\in \lineN^n$ such that
			\begin{enumerate}
				\item $w_i=0$ if $m_{i}=\infty$ and
				\item either $w_i=0$ for all $i\in \{1,\dots,n\}$ or $$\sum_{\substack{i=1 \\ m_{i}\neq 1}}^n \frac{w_i}{m_i}\geq 1.$$
			\end{enumerate}
		\end{itemize}
	\end{definition}
	Note that for $I\subset \{1,\dots,n\}$, the Darmon points and the (weak) Campana points on $(\cX,\sum_{i\in I}\cD_i)$ are just the integral points on $\cX\setminus \bigcup_{i\in I} \cD_i$ as in Example \ref{example: integral points}.
	
	\begin{example}
		To illustrate $\cM$-points and the above definitions, let us take $X=\mathbb{P}^{n-1}_{\mathbb{Q}}$ with the integral model $\cX=\mathbb{P}^{n-1}_{\mathbb{Z}}$ over $\mathbb{Z}$. Furthermore, we choose the divisors $D_1=\{X_1=0\},\dots,D_n=\{X_n=0\}$ to be the coordinate hyperplanes in $\mathbb{P}^{n-1}_{\mathbb{Q}}$ and we take $\cD_1,\dots,\cD_n$ to be their Zariski closures. Using the description of the multiplicity map from Example \ref{example: multiplicity projective space}, we give examples of $\cM$-points on projective space. In what follows, let $(a_1:\dots:a_n)$ be a rational point, written such that $(a_1,\dots,a_n)\in \mathbb{Z}^n$ is a tuple of coprime integers.
		
		Let $m_1,\dots,m_n\in \mathbb{N}^*\cup \{\infty\}$ and set $\cD_{\mathbf{m}}= \sum_{i=1}^n\left(1-\frac{1}{m_i}\right)\cD_i$ as before.
		\begin{itemize}
			\item A point $(a_1:\dots: a_n)$ is a Darmon point over $\mathbb{Z}$ on $(\cX, \cD_{\bfm})$ if and only if $|a_i|$ is an $m_i$-th power for all $i\in \{1,\dots, n\}$, or $|a_i|=1$ if $m_i=\infty$. The absolute value sign here accounts for the fact that $\mathbb{Z}^*=\{1,-1\}$.
			\item A point $(a_1:\dots: a_n)$ is a Campana point over $\mathbb{Z}$ on $(\cX, \cD_{\bfm})$ if and only if $a_i$ is an $m_i$-full number for all $i\in \{1,\dots, n\}$. Here we recall that a number $a\in \mathbb{Z}$ is $m$-full, for a positive integer $m$, if for every prime $p$ dividing $a$, $p^m$ also divides $a$. Furthermore, we use the convention that the only $\infty$-full integers are $1$ and $-1$.
			\item For simplicity, assume that $m_1=\dots=m_n=m$ for some positive integer $m$. A point $(a_1:\dots: a_n)$ is a weak Campana point over $\mathbb{Z}$ on $(\cX, \cD_{\bfm})$ if and only if the product $a_1\dots a_n$ is an $m$-full number.
		\end{itemize}
\end{example}
		\begin{remark} \label{remark: Darmon points and root stacks}
			As explained in \cite[\S 8]{Moe24}, Darmon points can be viewed as integral points on the root stack $(\cX,\sqrt[\bfm]{\cD})$ corresponding to the Campana pair $(\cX,\cD_{\bfm})$.
		\end{remark}

Geometric Campana points and geometric Darmon points are defined similarly, but use the geometric components of $D_{\bfm}$ rather than the components over $K$.
\begin{definition}
		Let $(X,D_{\bfm})$ be a Campana pair over a number field $K$ such that the support of $D_{\bfm, \overline{K}}$ is a strict normal crossings divisor. Let $E/K$ be a Galois extension such that we can write $D_{\bfm,E}=\sum_{i=1}^n \left(1-\frac{1}{m_i} \right)D_i$ for smooth divisors $D_1,\dots, D_n$ on $X_E$. Let $S\subset \Omega_K$ be a finite set of places containing the infinite places and the primes which ramify in $E/K$ and let $\tilde{S}\subset \Omega_E$ be the set of places above $S$.
		Let $(\cX,\cD_{\bfm})$ be an integral model of $(X,D_{\bfm})$ over $\cO_S$ as in Definition \ref{def: Campana points and Darmon points} and write $\cD_{\bfm,\cO_{\tilde{S}}}=\sum_{i=1}^n \left(1-\frac{1}{m_i} \right)\cD_i$, where $\cD_1,\dots,\cD_n$ are the Zariski closures of the divisors $D_1,\dots, D_n$ in $\cX_{\cO_{\tilde{S}}}$.
		Let $Y$ be a scheme over $\cO_S$ (such as $\spec \cO_S$).
		\begin{itemize}
			\item \textit{Geometric Darmon points on $(\cX,\cD_{\bfm})$} over $Y$ are the $\cM$-points over $Y$ for $(\cX,\cM)$, where $\fM$ is the collection of $\mathbf{w}\in \lineN^n$ for which $m_i| w_i$ for all $i\in \{1,\dots,n\}$.
			\item \textit{Geometric Campana points on $(\cX,\cD_{\bfm})$} over $Y$ are the $\cM$-points over $Y$ for $(\cX,\cM)$, where $\fM$ is the collection of $\mathbf{w}\in \lineN^n$ such that for all $i\in \{1,\dots,n\}$ we have
			\begin{enumerate}
				\item $w_i=0$ if $m_{i}=\infty$ and
				\item $w_i=0$ or $w_i\geq m_i$ if $m_i\neq \infty$.
			\end{enumerate}
		\end{itemize}
	\end{definition}
	Geometric Campana points can be viewed as the subset of Campana points which remain Campana points after enlarging the base field. In particular, geometric Campana points are just the same as Campana points whenever the support of the divisor $D_{\bfm}$ is a strict normal crossings divisor (over the ground field $K$). For Darmon points analogous statements hold.
	\begin{remark}
		In this article, we only consider geometric Campana points and geometric Darmon points over rings of integers $\cO_S$ which do not ramify in the splitting field $E/K$. In \cite{ShSt24}, geometric Campana points and geometric Darmon points of more general rings of integers were considered. In their setting, Campana points and Darmon points can differ from their geometric counterparts due to ramification, see \cite[Lemma 2.7]{ShSt24}.
	\end{remark}
	
Let us consider an example where geometric Campana points differ from Campana points.
\begin{example} \label{Example: Campana points not preserved under base change}
		Consider the Campana pair $(\mathbb{P}_{\mathbb{Q}}^2, \tfrac{1}{2}D)$ with $D=\{X^2+Y^2=0\}$. Over $\mathbb{Q}(\sqrt{-1})$ the divisor $D$ splits up as
		$$D_{\mathbb{Q}(\sqrt{-1})}=D_1+D_2,$$
		where $D_1=\{X+\sqrt{-1}Y=0\}$ and $D_2=\{X-\sqrt{-1}Y=0\}$. Let $\cD$ be the Zariski closure of $D$ in $\mathbb{P}^2_{\mathbb{Z}}$ and similarly let $\cD_1$ and $\cD_2$ be the Zariski closures of $D_1$ and $D_2$ in $\mathbb{P}^2_{\mathbb{Z}[i]}$. The point $(14:2:1)$ is a Campana point over $\mathbb{Z}$ on the pair $(\mathbb{P}^2_\mathbb{Z},\tfrac{1}{2}\cD)$ as $14^2+2^2=2^3\cdot 5^2$. Nevertheless, for the place $v=(1-2\sqrt{-1})$ we have $$n_v(\cD_1,(14:2:1))=v(14+2\sqrt{-1})=1$$ as $14+2\sqrt{-1}=2(1-2\sqrt{-1})(1+3\sqrt{-1})$. This implies that $(14:2:1)$ is not a $v$-adic Campana point on the pair $(\mathbb{P}^2_{\mathbb{Z}[\sqrt{-1}]},\tfrac{1}{2}\cD_1+\tfrac{1}{2}\cD_2)$, so $(14:2:1)$ is not a geometric Campana point on $(\mathbb{P}_{\mathbb{Q}}^2, \tfrac{1}{2}D)$.
	\end{example}

Besides Campana points, Darmon points, and (weak) Campana points, there are many more interesting sets of $\cM$-points, see \cite[\S 3.3]{Moe24}.

	\section{The Picard group of a pair and its geometry}
	In this section we will introduce the Picard group and the canonical divisor class of smooth pairs over a field $K$, and we will use this to define the Fujita and $b$-invariants used in the formulation of Conjecture \ref{conj: generalized Manin conjecture}.
	
	For pairs $(X,M)$ corresponding to Darmon points, there is a natural definition for the Picard group of $(X,M)$. For such a pair, we have a corresponding root stack $(X,\sqrt[\bfm]{D})$ which lies over $X$. In this case we would like the Picard group of $(X,M)$ to be the Picard group of $(X,\sqrt[\bfm]{D})$, and the canonical divisor class to be the canonical divisor class on the stack. Both the Picard group and the canonical divisor class can be computed using the natural morphism $(X,\sqrt[\bfm]{D})\rightarrow X$, which is an isomorphism over the open set $U=X\setminus (D_1\cup\dots \cup D_n)$ and ramified over the divisors $D_i$ with multiplicity $m_i$.
	
	More generally, we can regard $(X,M)$ as a geometric space akin to a scheme or stack lying above $X$. If $(X,M)=(X,M_{\mon})$ then $(X,M)$ determines a functor $Y\mapsto (X,M)(Y)$ as we have seen in Remark \ref{remark: (X,M) as a functor}. We view the natural inclusion map $(X,M)\rightarrow X$ as analogous to a ``generically finite morphism" which is an isomorphism over $U$. Inspired by this view we introduce corresponding objects to the pair $(X,M)$, such as divisors and a Picard group.
	
	\subsection{Divisors on pairs}
	Before we can define divisors on pairs, we first need to define the generators of the pair, which we define similarly as for monoids. %The set $\fM_{\cC}$ comes equipped with a natural preorder: we set $(\bfm,c)\leq (\bfm',c')$ if there exist $
	\begin{definition}
		For a pair $(X,M)$ we define its \textit{set of generators} to be the set $\Gamma_{M,\cC}$ of all $(\bfm,c)\in \fM_{\cC}\cap \mathbb{N}^n_{\cC}$ such that the only subset $J\subset \fM_{\cC}\setminus\{(\mathbf{0},X)\}$ such that $$\bfm=\sum_{(\bfm',c')\in J}\bfm' \quad\text{  and  }\quad c\subset \bigcap_{(\bfm',c')\in J} c'$$ is $J=\{(\bfm,c)\}$.
		If $(X,M)$ is a divisorial pair defined by a set of multiplicities $\fM\subset \lineN^n$, then we set $$\Gamma_M=\{\bfm\in \fM_{\red}\cap \mathbb{N}^n\mid \bfm\text{ is not a sum of two nonzero elements in }\fM_{\mon}\}.$$
	\end{definition}
	If $(X,M)$ is a divisorial pair then the above definition implies that we have $$\Gamma_{M,\cC}=\{(\bfm,c)\mid \bfm\in \Gamma_M, c\in \cC_{\bfm}\}.$$
	\begin{example}
		If $\fM_{\cC,\fin}=\{(\mathbf{0},X)\}$, then $\Gamma_{M,\cC}$ is the empty set.
	\end{example}
	\begin{definition} \label{def: smooth pair}
		A pair $(X,M)$ over a field $K$ is called \emph{smooth} if
		\begin{enumerate}
			\item the set of generators $\Gamma_{M,\cC}$ is finite.
			\item $X$ is a smooth variety over $K$, 
			\item every divisor $D_i$ is connected, nonempty and smooth over $K$ while $\sum_{i=1}^n D_i$ is a strict normal crossings divisor as defined in \cite[Tag 0BI9]{Stacks},
		\end{enumerate}
	\end{definition}
	The use of the term ``smooth'' is motivated by the fact that strict normal crossings pairs as considered in logarithmic geometry are log smooth, see e.g. \cite[Chapter IV, Example 3.1.14]{Ogu18}. Furthermore, \cite[Proposition 8.3]{Moe24} shows that a root stack $(X, \sqrt[\bfm]{D})$ is smooth if its corresponding pair $(X,M)$ is smooth.

\begin{proposition} \label{prop: equivalent to smooth pair}
Let $(X,M)$ be a pair over a field $K$ and assume that $X$ is smooth and that $\#\Gamma_{M,\cC}<\infty$. If the divisor $\sum_{i=1}^n D_{i,\overline{K}}$ is a strict normal crossings divisor, then $(X,M)$ is equivalent to a smooth pair $(X,M')$ over $K$.
\end{proposition}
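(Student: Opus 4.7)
The plan is to pass to a larger Galois extension $E'/K$ over which every geometric component of each $D_i$ is defined, and to build a new pair $(X,M')$ whose boundary divisors are these components. First I would fix a Galois extension $E'/K$ containing the splitting field $E$ of $(X,M)$ such that every geometrically irreducible component of $D_{i,\overline K}$ is already defined over $E'$. Writing $D_{i,E'}=\sum_j D'_{i,j}$ for this decomposition and relabeling the $D'_{i,j}$ as $D'_1,\dots,D'_{n'}$, the snc hypothesis over $\overline K$ (which descends to $E'$) makes each $D'_k$ smooth, connected, nonempty, and $\sum_k D'_k$ a strict normal crossings divisor on $X_{E'}$. These divisors are permuted by $G'=\gal(E'/K)$.

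Next, I would define a summing map
\[
\Phi\colon \lineN^{n'}_{\cC'}\longrightarrow \lineN^{n}_{\cC},\qquad (\bfw,c')\longmapsto (\bfm,c),
\]
by $m_i=\sum_{j:\,D'_j\subset D_{i,E'}}w_j$, and $c$ the unique irreducible component of $\bigcap_{i:m_i>0}D_i$ whose base change to $E'$ contains $c'$ (unique because snc forces $c_{E'}$ to be a union of irreducible components of $\bigcap_i D_{i,E'}$, and $c'$ is irreducible). Set $\fM'_{\cC'}:=\Phi^{-1}(\fM_\cC)$. Since $\Phi$ is $G'$-equivariant via $G'\twoheadrightarrow G$, this set is $G'$-stable and contains $(\mathbf 0,X)$. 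The central step is to verify $(X,M)(Y)=(X,M')(Y)$ for every regular Noetherian $Y$: given $P\colon Y\to X$ and primes $v'$ on $Y_{A'}$ lying over $v$ on $Y_A$, the proposition on Galois covers gives $n_{v'}(D_i,P_{A'})=n_v(D_i,P)$; the disjointness of the prime components in the snc divisor $\sum_j D'_{i,j}$ yields $n_v(D_i,P)=\sum_j n_{v'}(D'_{i,j},P_{A'})$; and $P(v')$ lies in a unique new stratum mapping under $\Phi$ to the old stratum containing $P(v)$. Using $G'$-stability of $\fM'_{\cC'}$ to reduce to one $v'$ per $v$, this shows $\mult_{v'}^\cC(P_{A'})\subset\fM'_{\cC'}$ is equivalent to $\mult_v^\cC(P)\subset\fM_\cC$.

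Finally, smoothness of $(X,M')$ reduces to $\#\Gamma_{M',\cC'}<\infty$, as the other conditions hold by construction. I would argue that each generator of $M'$ maps under $\Phi$ to a generator of $M$: given a decomposition $(\bfm,c)=(\bfm_1,c_1)+(\bfm_2,c_2)$ in $\fM_{\cC,\mon}$ with $c\subset c_1\cap c_2$, distribute $\bfw=\bfw_1+\bfw_2$ so that $\sum_{j:\,D'_j\subset D_{i,E'}}w_{k,j}=m_{k,i}$ for $k=1,2$, and take $c'_k$ to be the irreducible component of $\bigcap_{j:\,w_{k,j}>0}D'_j$ containing $c'$. The snc structure forces $c'_k\subset c_{k,E'}$, so $\Phi(\bfw_k,c'_k)=(\bfm_k,c_k)$, producing a decomposition of $(\bfw,c')$ in $\fM'_{\cC',\mon}$. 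Hence generators of $M'$ sit above generators of $M$, and since each fibre of $\Phi$ over a generator is finite (finitely many ways to distribute each finite $m_i$ among the components of $D_{i,E'}$, and finitely many new strata above $c$), $\#\Gamma_{M',\cC'}<\infty$. The main obstacle is this lifting of stratum decompositions, which crucially uses the snc hypothesis to guarantee $c'_k\subset c_{k,E'}$.
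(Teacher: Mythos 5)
Your proposal follows essentially the same approach as the paper: pass to a Galois extension over which the geometric components of the $D_i$ are defined, and pull back $\fM_{\cC}$ along the summing map to obtain $\fM'_{\cC}$. The paper's proof is extremely terse (it simply constructs the same $\fM'_{\cC}$ and asserts equivalence and smoothness), so your more detailed verification of the equivalence of $\cM$-points and the finiteness of $\Gamma_{M',\cC'}$ is a correct elaboration of the same argument rather than a genuinely different route.
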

\begin{proof}
 The pair $(X,M')$ is constructed as follows. Let $E/K$ be a Galois extension over which all irreducible components of $D_1,\dots,D_n$ are defined. For $i\in \{1,\dots,n\}$, let $D_{i,E}= \sum_{\alpha\in \cA_i} D'_{i,\alpha}$ be the decomposition into irreducible components. Let $\fM'_{\cC}$ be the set of all $(\bfm',c')$, where $\bfm'\in \mathbb{N}^{\cA_1\times\dots\times \cA_n}$ and $c\in \cC_{\bfm'}$ such that $$\bigg(\bigg(\sum_{\alpha\in \cA_1} m_{1,\alpha}, \dots, \sum_{\alpha\in \cA_n} m_{n,\alpha}\bigg),c\bigg)\in \fM_{\cC},$$
		where $c'\subset c$. Then the pair $(X,M')$ over $K$ defined using this data is equivalent to $(X,M)$. Furthermore, by the assumptions on $(X,M)$ it follows that $(X,M')$ is smooth.
\end{proof}
	\begin{remark} \label{remark: equivalent to smooth pair Campana}
		Let $(X,D_{\bfm})$ be a Campana pair, where $X$ is smooth and the support of $D_{\bfm,\overline{K}}$ is a strict normal crossings divisor. As a special case of the previous proposition, we see that any pair $(X,M)$ corresponding to (weak) Campana or Darmon points on $(X,D_{\bfm})$ is equivalent to a smooth pair.
	\end{remark}
	
	\begin{assumption} \label{assumption: rationally connected characteristic 0}
		From now on, all pairs considered will be smooth. Furthermore, we will always assume that $X$ is a connected, proper variety over a field of characteristic $0$, unless mentioned otherwise.
	\end{assumption}

	For some results, we will need to assume that the pair is proper, as we will now define.
	\begin{definition} \label{def: proper pair}
		A pair $(X,M)$ is \textit{proper} if $X$ is proper and $\fM_{\cC}$ contains elements $(d_1\bfe_1,D_1),\dots, (d_n\bfe_n,D_n)$ for positive integers $d_1,\dots,d_n$.
	\end{definition}
	The set of generators of a proper pair is always finite, as the next proposition shows.
	\begin{proposition}
		Let $(X,M)$ be a proper pair over a field $K$. Then the set of generators $\Gamma_{M,\cC}$ is finite.
	\end{proposition}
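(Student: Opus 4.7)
The plan is to show that $\Gamma_{M,\cC}$ is contained in the set of atoms of the monoid $\fM_{\cC,\mon}$ from Notation \ref{notation: M_mon}, and then bound those atoms stratum-by-stratum using Dickson's lemma together with the properness hypothesis.

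First I note that a nontrivial decomposition $(\bfm, c) = (\bfm_1, c_1) + (\bfm_2, c_2)$ in $\fM_{\cC,\mon}$ (where $c$ is an irreducible component of $c_1 \cap c_2$), after unfolding the monoid closure, produces a multiset $J$ of elements of $\fM_{\cC} \setminus \{(\mathbf{0}, X)\}$ of size at least two with $\sum \bfm' = \bfm$ and $c \subset \bigcap c'$. Such a multiset is forbidden for a generator, so $\Gamma_{M,\cC}$ lies in the set of atoms of $\fM_{\cC,\mon}$. Since $X$ is Noetherian and the $D_i$ are finite in number, there are only finitely many strata $c$ (irreducible components of the intersections $\bigcap_{i \in I} D_i$), so it suffices to fix $c$ and show that $T_c := \{\mathbf{a} : (\mathbf{a}, c) \in \fM_{\cC,\mon}\text{ is an atom}\}$ is finite. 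Setting $I(c) := \{i : c \subset D_i\}$, the set $S_c := \{\mathbf{a} : (\mathbf{a}, c) \in \fM_{\cC,\mon}\}$ is a submonoid of $\mathbb{N}^{I(c)}$, and crucially $S_c$ is closed under adding $d_i \bfe_i$ for each $i \in I(c)$: the inclusion $c \subset D_i$ forces $c \cap D_i = c$, whose unique irreducible component is $c$, so $(\mathbf{a}, c) + (d_i \bfe_i, D_i)$ computes as $(\mathbf{a} + d_i \bfe_i, c)$ in $\fM_{\cC,\mon}$.

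To finish, I apply pigeonhole on the finite quotient $\mathbb{Z}^{I(c)}/L'$, where $L' := \bigoplus_{i \in I(c)} d_i \mathbb{Z}$, together with Dickson's lemma. If $T_c$ were infinite, some coset of $L'$ would meet $T_c$ in an infinite set, and Dickson's lemma then produces $\mathbf{a}, \mathbf{a}' \in T_c$ with $\mathbf{a} < \mathbf{a}'$ in that coset; their difference satisfies $\mathbf{a}' - \mathbf{a} = \sum_{i \in I(c)} k_i d_i \bfe_i$ with some $k_j \geq 1$. Then $(\mathbf{a}', c) = (\mathbf{a}' - d_j \bfe_j, c) + (d_j \bfe_j, D_j)$ is a nontrivial decomposition in $\fM_{\cC,\mon}$, where $\mathbf{a}' - d_j \bfe_j \in S_c$ follows by iterated application of the closure property to $\mathbf{a} \in S_c$, contradicting $\mathbf{a}' \in T_c$. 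The main technical point to get right is the closure of $S_c$ under these additions, which requires careful bookkeeping of strata through the monoid operation defining $\fM_{\cC,\mon}$.
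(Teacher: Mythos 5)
Your proof is correct and follows essentially the same strategy as the paper's: reduce to finitely many strata (Noetherianness), pigeonhole on residue classes modulo the lattice $d_1\mathbb{Z}\times\dots\times d_n\mathbb{Z}$, and apply Dickson's lemma to conclude that generators/atoms in a fixed coset form a finite antichain. Your version is more carefully spelled out in two respects: you work explicitly with atoms of the monoid $\fM_{\cC,\mon}$ (correctly reading the definition of $\Gamma_{M,\cC}$ as allowing multisets $J$, which is the interpretation forced by the stated agreement with the divisorial definition and by the truth of the proposition itself), and you verify the stratum bookkeeping needed to show that $S_c$ is closed under adding $d_i\bfe_i$ for $i\in I(c)$ --- a point the paper leaves implicit when it passes between $(\bfm,c')$ with $c\subset c'$. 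The paper's proof is terser and phrases the key step as ``generators in a fixed nonzero coset are minimal in $\fM_{c,r}$'' (leaving the zero coset to the reader), whereas your argument handles all cosets uniformly by producing an explicit nontrivial decomposition $(\mathbf{a}',c)=(\mathbf{a}'-d_j\bfe_j,c)+(d_j\bfe_j,D_j)$; the only small thing worth adding is the check that this decomposition is genuinely nontrivial, i.e.\ $\mathbf{a}'-d_j\bfe_j\neq\mathbf{0}$, which follows since $\mathbf{a}'=d_j\bfe_j$ together with $\mathbf{0}\neq\mathbf{a}<\mathbf{a}'$ and $\mathbf{a}\equiv\mathbf{a}'$ modulo $L'$ would force $\mathbf{a}=\mathbf{0}$.
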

	\begin{proof}
		Assume that $(d_1\bfe_1,D_1),\dots, (d_n\bfe_n,D_n)\in \fM_{\cC}$ for some positive integers $d_1,\dots,d_n$.
		For a stratum $c$, let $\fM_c$ (respectively $\Gamma_c$) be the set of all $\bfm\in \mathbb{N}^n$ such that $(\bfm,c')\in \fM_{\cC}$ (respectively $\in \Gamma_{M,\cC}$) for $c\subset c'$. Let $r\in \mathbb{Z}^n/(d_1\mathbb{Z}\times\dots\times d_n\mathbb{Z})$ be a nonzero residue class and let $\fM_{c,r}\subset \fM_c$ be the subset of elements which have residue class $r$. The elements $\Gamma_c \cap\fM_{c,r}$ are minimal elements in $\fM_{c,r}$ with respect to the partial order induced from $\mathbb{N}^n$. This implies that $\Gamma_c \cap\fM_{c,r}$ is finite so $\Gamma_c$ is finite. As there are only finitely many strata $c$, it follows that $\Gamma_{M,\cC}$ is finite.
	\end{proof}

	\begin{example} \label{example: generators Darmon and Campana}
		If $(X,M)$ is the pair corresponding to the Darmon points for the Campana pair $(X,D_{\bfm})$, where $D_{\bfm}=\sum_{i=1}^n\left(1- \frac{1}{m_i}\right)D_i$ for positive integers $m_1,\dots,m_n$, then $\Gamma_{M}=\{m_1\bfe_1,\dots, m_n\bfe_n\}$. If $(X,M')$ is the pair corresponding to the Campana points for the Campana pair $(X,D_{\bfm})$, then $\Gamma_{M'}=\{m_1 \bfe_1,\dots, (2m_1-1)\bfe_1, m_2\bfe_2,\dots, (2m_n-1)\bfe_n\}$.
	\end{example} 
	On a split pair $(X,M)$, we associate a formal symbol $\tilde{D}_{\bfm,c}$ to each element $(\bfm,c)\in \Gamma_{M,\cC}$, which we will refer to as a prime divisor on $(X,M)$.
	\begin{definition} \label{def: divisors (X,M)}
		Let $(X,M)$ be a smooth split pair over a field $K$. The \textit{group of divisors on $(X,M)$} is
		$$\Div(X,M):=\Div(U)\times\bigoplus_{(\bfm,c)\in \Gamma_{M,\cC}} \mathbb{Z}(\tilde{D}_{\bfm,c}),$$
		where $\Div(U)$ is the group of divisors on $U$.
		More generally, for a smooth pair $(X,M)$ over $K$ with splitting field $E$, the \textit{group of divisors on $(X,M)$} is
		$$\Div(X,M)=\Div(X_E,M)^G,$$
		where $G=\gal(E/K)$ acts by $\sigma\left(\tilde{D}_{\bfm,c}\right)=\tilde{D}_{\sigma(\bfm,c)}$.
		A \textit{divisor} on $(X,M)$ is an element of $\Div(X,M)$. Analogously, we define $\mathbb{Q}$-divisors and $\mathbb{R}$-divisors to be the elements in $\Div(X,M)_{\mathbb{Q}}$ and $\Div(X,M)_{\mathbb{R}}$, respectively.
		A divisor $\tilde{D}$ on $(X,M)$ is a \textit{prime divisor} if it is a prime divisor on $U$ or $\tilde{D}=\sum_{\sigma\in G/\mathrm{Stab}(\bfm,c)}\tilde{D}_{\sigma(\bfm,c)}$ for some $(\bfm,c)\in \Gamma_{M,\cC}$, where $\mathrm{Stab}(\bfm,c)\subset G$ is the stabilizer subgroup of $(\bfm,c)$.
		Finally, a $\mathbb{Q}$-divisor is called \textit{effective} if it is a nonnegative $\mathbb{Q}$-linear combination of prime divisors on $(X,M)$.
		%For $\bfm\in \Gamma_M$, we set $\tilde{D}_{\bfm}=\sum_{c\in \cC_{\bfm}} \tilde{D}_{\bfm,c}$.
	\end{definition}
	\begin{notation}
		If $D,D'$ are two $\mathbb{Q}$-divisors on a pair $(X,M)$, then we write $D\geq D'$ if $D-D'$ is effective.
	\end{notation}
	In order to define the Picard group of a pair, we first need to define when two divisors on a pair are linearly equivalent. We will define this notion by introducing the pullback $\pr^*_M\colon \Div(X)\rightarrow \Div(X,M)$ from divisors on $X$ to divisors on $(X,M)$. In order to define this homomorphism, we first need to determine for every $(\bfm,c)\in \Gamma_{M,\cC}$ and every divisor $D\in \Div(X)$ what the coefficient $\mu((\bfm,c),D)$ of $\tilde{D}_{\bfm,c}$ in the pullback of $D$ should be.
	
	\begin{definition} \label{def: valuation (m,c)}
		Let $(X,M)$ be a smooth split pair over a field $K$ and let $(\bfm,c)\in \mathbb{N}^n_{\cC}$ such that $\gcd(\bfm)=1$. Let $\cO_{X,c}$ be the local ring at the closed subscheme $c\subset X$. Let $f_1,\dots,f_n\in \cO_{X,c}$ be local equations of the divisors $D_1,\dots,D_n$. We define $$v_{(\bfm,c)}\colon K(X)^\times\rightarrow \mathbb{Z}$$ to be the discrete valuation on $K(X)$ given by $v_{(\bfm,c)}(f_i)=m_i$ for all $i\in \{1,\dots, n\}$ and $v_{(\bfm,c)}(g)=0$ for all $g\in \cO_{X,c}^\times$.
		
		For a smooth pair $(X,M)$ over $K$ with splitting field $E$ and $(\bfm,c)\in \mathbb{N}^n_{\cC}$, we define $v_{(\bfm,c)}$ by restricting the corresponding discrete valuation $E(X)^\times\rightarrow \mathbb{Z}$ to $K(X)^\times$.
	\end{definition}
	Because $\sum_{i=1}^n D_i$ is a strict normal crossing divisor, it follows that $$\{f_i\mid i\in \{1,\dots,n\}, m_i\neq 0\}$$ is a minimal set of generators for the maximal ideal of the local ring $\cO_{X,c}$. Furthermore, if $i\in \{1,\dots,n\}$ satisfies $m_i=0$, then $f_i\in \cO_{X,c}^\times$. These facts together imply that $v_{(\bfm,c)}$ is indeed a well defined discrete valuation.
	% such that $f_i=1$ for all $i\in \{1,\dots, n\}$ satisfying $c\not\subset D_i$.
	\begin{remark} \label{remark: valuation Galois invariant}
		Note that on a smooth pair $(X,M)$ over $K$, the valuation $v_{(\bfm,c)}$ in Definition \ref{def: valuation (m,c)} only depends on the Galois orbit of $(\bfm,c)$, i.e. we have
		$$v_{(\bfm,c)}=v_{\sigma(\bfm,c)},$$
		for all $\sigma\in G$, where $G=\gal(E/K)$, where $E$ is the splitting field of $K$. This is because, the corresponding valuations on $E(X)^\times$ satisfy
		$$v_{(\bfm,c)}(f)=v_{\sigma(\bfm,c)}(\sigma(f)),$$
		for all $f\in E(X)^\times$.
	\end{remark}
	We will use the valuation $v_{(\bfm,c)}$ to define the pullback $\Div(X)\rightarrow \Div(X,M)$.
	\begin{definition} \label{def: mu}
		Let $D$ be a divisor on $X$ and let $(\bfm,c)\in \mathbb{N}^n_{\cC}\setminus\{(\mathbf{0},X)\}$. We define $$\mu((\bfm,c),E-E'):=\gcd(\bfm)\cdot v_{(\bfm/\gcd(\bfm),c)}(f),$$
		where $f\in K(X)^\times$ is a local equation for $D$ on some open set intersecting $c$.
	\end{definition}
	In particular, we have
	\begin{equation} \label{eq: concave 2}
		\mu((d\bfm,c), D)=d \mu((\bfm,c), D)
	\end{equation} for every positive integer $d$.
	\begin{definition}
		Let $(X,M)$ be a smooth pair over a field $K$.
		The \textit{pullback} of divisors on $X$ to divisors on $(X,M)$ is the group homomorphism $$\pr_M^*\colon \Div(X)\rightarrow \Div(X,M)$$ defined by
		$$D_i\mapsto \sum_{(\bfm,c)\in \Gamma_{M,\cC}} m_i\tilde{D}_{\bfm,c},$$
		for $i=1,\dots, n$ and
		$$D\mapsto D+\sum_{(\bfm,c)\in \Gamma_{M,\cC}} \mu((\bfm,c),D)\tilde{D}_{\bfm,c}$$
		for every divisor $D\in \Div(U)$.
	\end{definition}
	Note that by Remark \ref{remark: valuation Galois invariant} the image is indeed Galois-invariant, so the homomorphism is well defined.

	\begin{remark}
		Let $(\bfm,c)\in \mathbb{N}^n_{\cC}$. If $c$ is not contained in the support of a divisor $D$ on $X$, then
		$$\mu((\bfm,c),D)=0.$$ 
	\end{remark}
	%	\begin{remark}
		%		The integer $\mu((\bfm,c),D)$ is the largest integer $\mu$ such that there is an inclusion of subschemes
		%		$$\bigcap_{\substack{i=1 \\ m_i>0}}^n \frac{\mu \lcm(m_1,\dots,m_n)}{m_i}D_i\subset \lcm(m_1,\dots,m_n)D$$
		%		in an open neighbourhood of the generic point of $c$, where the intersection is the scheme theoretic intersection.
		%		The role of the least common multiple $\lcm(m_1,\dots,m_n)$ here is to ensure that the left hand side is a well defined closed subscheme of $X$.
		%	\end{remark}
	
	\begin{remark}
		The pullback of divisors to $(X,M)$ is intimately related to the theory of weighted (stacky) blow-ups as described in \cite{QuRy22}. For $(\bfm,c)\in \mathbb{N}_{\cC}^n$, consider an open subset $X'\subset X$ such that the divisors $D_1,\dots,D_n$ are principal on $X'$ and $\tilde{D}_{\bfm}= \tilde{D}_{\bfm,c}$. Let $I_{\bullet}$ be the graded $\cO_{X'}$-subalgebra of $\cO_{X'}[t]$ generated by $f_1 t^{m_1},\dots, f_n t^{m_n}$, where $f_i=0$ is a local equation for $D_i$. If $\pi\colon \mathrm{Bl}_{I_{\bullet}}X'\rightarrow X'$ is the corresponding weighted blow-up as defined in \cite[Definition 3.2.1]{QuRy22} and $D\in \Div(X')$, then the coefficient of the exceptional divisor in $\pi^* D$ is equal to $\mu((\bfm,c), D)$.
	\end{remark}
	Note that $\Div(U)$ embeds into $\Div(X,M)$ in two distinct ways. The first is by the embedding $\Div(U)\hookrightarrow \Div(X,M)\colon D\mapsto D$ directly given by the definition of $\Div(X,M)$. The second one is given by composing $\Div(U)\hookrightarrow \Div(X)$ with the pullback $\Div(X)\rightarrow \Div(X,M)$ to obtain the map $D\mapsto \pr^*_M D$.
	\begin{definition}
		For a divisor $D\in \Div(U)$, we call $D\in \Div(X,M)$ the \textit{strict transform} of $D$ in $(X,M)$.
	\end{definition}
	This terminology is motivated by the fact that $\pr^*_M D-D$ is a divisor whose restriction to $U$ is trivial. This is illustrated by the following example.
	
	%Note that $\pr^* D-D$ is effective if $D$ is effective.
	\begin{example}
		Let $(X,M)$ be a split pair obtained by taking $X$ to be a smooth variety, the divisors $D_1,D_2$ to be two smooth divisors on $X$ intersecting transversally with connected intersection $D_1\cap D_2$, and let $\fM\subset \mathbb{N}^2$ be the monoid generated by $(1,1)$. Let $\tilde{X}=\Bl_{D_1\cap D_2} X\setminus(\tilde{D}_1\cup \tilde{D}_2)$, where $\tilde{D}_1,\tilde{D}_2$ are the strict transforms of $D_1$ and $D_2$ respectively. Then the homomorphism $\Div(X,M)\rightarrow \Div(\tilde{X})$, given by sending $\Div(U)$ to itself and $\tilde{D}_{(1,1)}$ to the exceptional divisor, is an isomorphism respecting the pullbacks of divisors in $U$. Thus under this isomorphism, the strict transform of a divisor in $U$ in $\Div(\tilde{X})$ corresponds to the strict transform in $\Div(X,M)$.
	\end{example}
	
	While the function $\mu\colon \mathbb{N}^n_{\cC}\times \Div(X)\rightarrow \mathbb{Z}$ is linear in the divisor argument, it is not linear in the first argument as the next examples show.
	\begin{example}
		Let $X=\mathbb{P}^2$, $n=2$, $D_1=\{X_1=0\}$, $D_2=\{X_2=0\}$, $\fM=\mathbb{N}^2\setminus\{(1,0),(0,1)\}$. Then $D_1\cap D_2=\{(0:0:1)\}$, and the divisor $D=\{X_1=X_2\}$ has pullback $\pr^*_M D=D+\tilde{D}_{(1,1)}$. In particular, $$1=\mu((1,1),D)\geq \mu((1,0),D)+\mu((0,1),D)=0$$
		For an example with no multiplicities equal to zero, we can take $$3=\mu((3,3),D)\geq \mu((2,1),D)+\mu((1,2),D)=2.$$
	\end{example}
	However, $\mu$ is a \textit{concave} function in the second argument, as the next lemma shows. We will exploit this fact in Lemma \ref{lemma: reduction to small generators} to understand the Fujita invariant and the $b$-invariant of line bundles. 
	\begin{lemma} \label{lemma: concave}
		Let $(X,M)$ be a smooth pair. Then for all $(\bfm,c),(\bfm',c')\in \mathbb{N}^n_{\cC}$ and for every effective divisor $D$ on $X$,
		\begin{equation} \label{eq: concave 1}
			\mu((\bfm+\bfm',\tilde{c}),D)\geq\mu((\bfm,c),D)+\mu((\bfm',c'),D),
		\end{equation}
		for every connected component $\tilde{c}\in \cC_{\bfm+\bfm'}$ of $c\cap c'$.
		%Furthermore, for any $t\in \mathbb{N}^*$
		%\begin{equation} \label{eq: concave 2}
		%\mu((t\bfm,c),D)=t\mu ((\bfm,c),D).
		%\end{equation}
		Therefore, for all $\lambda,\lambda'\in \mathbb{Q}$ with $\lambda \bfm\in \mathbb{N}^n$ and $\lambda\bfm'\in \mathbb{N}^n$,
		\begin{equation} \label{eq: concave 3}
			\mu((\lambda\bfm+\lambda'\bfm',\tilde{c}),D)\geq \lambda\mu((\bfm,c),D)+\lambda'\mu((\bfm',c'),D).
		\end{equation}
	\end{lemma}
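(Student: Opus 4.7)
The plan is to identify $\mu((\bfm,c),D)$ with the valuation $v_{(\bfm,c)}(g)$ of a local equation $g$ of $D$, and then compute all three values appearing in \eqref{eq: concave 1} from a single power-series expansion of $g$ in the completed local ring $\widehat{\cO_{X,\tilde c}}$. Since $D$ is effective and $\tilde c \subset c \cap c'$, I would choose a common affine neighborhood of $\tilde c$ on which $D$ has an equation $g \in \cO_{X,\tilde c}$ that simultaneously serves as a local equation near $c$ and $c'$. By the strict normal crossings assumption in Definition~\ref{def: smooth pair}, the local rings $\cO_{X,c}, \cO_{X,c'}, \cO_{X,\tilde c}$ are regular, with regular systems of parameters given precisely by those $f_i$ for which the respective stratum lies in $D_i$; in particular the regular parameters of $\cO_{X,\tilde c}$ are $\{f_i : m_i + m_i' > 0\}$.

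Next I would pick a Cohen section and expand
\[
g = \sum_{\alpha} h_\alpha \prod_{i: m_i + m_i' > 0} f_i^{\alpha_i} \quad \text{in } \widehat{\cO_{X,\tilde c}},
\]
with coefficients $h_\alpha$ in the residue field at $\tilde c$. The crucial verification is that all three valuations appearing in the lemma are computed from this single expansion as weighted minima: for instance
\[
v_{(\bfm,c)}(g) = \min\!\Bigl\{\textstyle\sum_{i} m_i \alpha_i : h_\alpha \ne 0\Bigr\},
\]
and analogously for $(\bfm',c')$ and $(\bfm+\bfm',\tilde c)$ with weights $m'_i$ and $m_i + m'_i$. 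To see this equals $v_{(\bfm,c)}(g)$, I would show that the weight valuation on $K(X)$ defined by this minimum assigns value $m_i$ to each $f_i$ and value $0$ to every unit of $\cO_{X,c}$: any $u \in \cO_{X,c}^\times$ does not vanish on $c$, hence not on $\tilde c \subset c$, so its constant term $h_0$ in the $\widehat{\cO_{X,\tilde c}}$-expansion is nonzero. By uniqueness of the discrete valuation on $K(X)$ characterized by these two properties, this weight valuation coincides with $v_{(\bfm,c)}$, and the cases of $v_{(\bfm',c')}$ and $v_{(\bfm+\bfm',\tilde c)}$ are handled identically.

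With all three expressed as weighted minima over the same support set $\{\alpha : h_\alpha \ne 0\}$, inequality \eqref{eq: concave 1} then follows from the elementary fact $\min_\alpha(a_\alpha + b_\alpha) \geq \min_\alpha a_\alpha + \min_\alpha b_\alpha$. For inequality \eqref{eq: concave 3} I would apply \eqref{eq: concave 1} to the pair $(\lambda\bfm, c)$ and $(\lambda'\bfm', c')$ and then invoke the $\mathbb{Q}$-homogeneity $\mu((\lambda\bfm,c),D) = \lambda\, \mu((\bfm,c),D)$ for positive rationals $\lambda$ with $\lambda\bfm \in \mathbb{N}^n$, which is a direct consequence of Definition~\ref{def: mu} together with \eqref{eq: concave 2}.

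The main obstacle is making the reduction to a single expansion rigorous, because $\cO_{X,c}$ and $\cO_{X,\tilde c}$ genuinely have different regular parameters: an $f_i$ with $m_i = 0 < m_i'$ is a unit in $\cO_{X,c}$ but a regular parameter in $\cO_{X,\tilde c}$. The reason $v_{(\bfm,c)}$ is still computed correctly from the finer expansion is that such an $f_i$ is assigned weight $m_i = 0$, so the extra variables contribute nothing to the weighted minimum, and grouping terms with fixed exponents on $\{f_i : m_i > 0\}$ recovers the expansion in $\widehat{\cO_{X,c}}$ without changing the minimum.
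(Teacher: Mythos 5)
Your strategy --- reading off all three valuations as weighted minima from a single power-series expansion at $\tilde c$ and invoking $\min(a_\alpha+b_\alpha)\geq\min a_\alpha+\min b_\alpha$ --- is essentially the same as the paper's, which reformulates $\mu((\bfm,c),D)\geq\mu$ as membership in an ideal $I^\mu_{(\bfm,c)}\subset \cO_{X,\tilde c}$, argues these ideals are monomial in the regular system of parameters of $\cO_{X,\tilde c}$, and compares generators. However, your justification that the weighted minimum computes $v_{(\bfm,c)}$ has a concrete error. You claim that $u\in\cO_{X,c}^\times$ ``does not vanish on $c$, hence not on $\tilde c\subset c$,'' but $\cO_{X,c}$ is the stalk at the \emph{generic point} of $c$: being a unit there only says $u$ is nonzero at that generic point, and $u$ may perfectly well vanish along proper closed subsets of $c$, including $\tilde c$ (take $c$ a curve, $\tilde c$ a closed point on it, and $u$ a function vanishing at the point but not identically on the curve). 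So the constant term $h_0$ can be zero for a unit of $\cO_{X,c}$, and the step as written fails.

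You do flag this issue in your final paragraph, and your instinct --- that the extra parameters $f_j$ with $m_j=0<m'_j$ carry $\bfm$-weight zero, so they cannot raise the weighted minimum --- is the right one. But ``grouping terms with fixed exponents on $\{f_i:m_i>0\}$ recovers the expansion in $\widehat{\cO_{X,c}}$'' is not a well-defined operation: those $f_j$ are units of $\cO_{X,c}$, and a formal power series in them has no reason to converge in $\widehat{\cO_{X,c}}$. The clean way to close the gap is to use that $\cO_{X,c}$ is the localization of $\cO_{X,\tilde c}$ at the prime $\mathfrak p=(f_i:m_i>0)$: for $b\in\cO_{X,\tilde c}\setminus\mathfrak p$, faithful flatness of completion gives $b\notin\mathfrak p\widehat{\cO_{X,\tilde c}}$, so the expansion of $b$ has some nonzero coefficient $h_\alpha$ with $\alpha$ supported on $\{j:m_j=0,\ m'_j>0\}$, hence of $\bfm$-weight zero. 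Thus the weighted-minimum valuation vanishes on $\cO_{X,\tilde c}\setminus\mathfrak p$, and therefore on all of $\cO_{X,c}^\times$, which is what you needed to identify it with $v_{(\bfm,c)}$.
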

	\begin{proof} 
		By the equality \eqref{eq: concave 2}, inequality \eqref{eq: concave 3} follows directly from the inequality \eqref{eq: concave 1}.
		%Let $f\in \cO_{X,\tilde{c}}$ be a local equation for $D$. 
		For any tuple $(\overline{\bfm},\overline{c})\in \fM_{\cC}$ with $\tilde{c}\subset \overline{c}$ and a nonnegative integer $\mu$, let $I_{(\overline{\bfm},\overline{c})}^\mu$ be the ideal of $\cO_{X,\tilde{c}}$ consisting of the $g\in \cO_{X,\tilde{c}}$ with $$\gcd(\overline{\bfm})v_{(\overline{\bfm}/\gcd(\overline{\bfm}),\overline{c})}(g)\geq \mu.$$ In particular, for a local equation $f\in \cO_{X,\tilde{c}}$ for $D$, we have $f\in I_{(\overline{\bfm},\overline{c})}^\mu$ if and only if $\mu((\overline{\bfm},\overline{c}),D)\geq \mu$.
		Therefore, to prove the statement, it suffices to prove that $$I_{(\bfm,c)}^\mu\cap I_{(\bfm,c)}^{\mu'}\subset I_{(\bfm,\tilde{c})}^{\mu+\mu'}$$
		for all positive integers $\mu,\mu'$.
		The ideal $I_{(\overline{\bfm},\overline{c})}^\mu$ is generated by elements $\prod_{i=1}^n f_i^{a_i}$ such that $\sum_{i=1}^n a_i\overline{m}_i\geq \mu$.
		
		Since $\{f_i\mid i\in \{1,\dots,n\},\, m_i>0\}$ is a minimal set of generators for the maximal ideal of $\cO_{X,\tilde{c}}$, it follows that the intersection $I_{(\bfm,c)}^\mu\cap I_{(\bfm,c)}^{\mu'}$ is generated by elements $x$ which can be written as $x=\prod_{i=1}^n f_i^{a_i}$ such that both $\sum_{i=1}^n a_im_i\geq \mu$ and $\sum_{i=1}^n a_im'_i\geq \mu'$. But this implies $\sum_{i=1}^n a_i(m_i+m_i')\geq \mu+\mu'$ and thus $x\in I_{(\bfm,\tilde{c})}^{\mu+\mu'}$, so we have shown $I_{(\bfm,c)}^\mu\cap I_{(\bfm,c)}^{\mu'}\subset I_{(\bfm,\tilde{c})}^{\mu+\mu'}$ as desired.
	\end{proof}
	%Indeed, when $(X,M)$ corresponds to an open $U\subset X$ or to a root stack $(X,\sqrt{D})\rightarrow X$, then 
	
	%(Introduce group torus-invariant divisors $\Div(T)$... Do for other divisors later, there would use $\Div(X\setminus \bigcup_{i=1}^n D_i)\times \bigoplus_{i=1}^{\tilde{n}} \Z \tilde{D}_i$.)
	%Let $\Div_T(X,M)=\bigoplus_{i=1}^{\tilde{n}} \Z \tilde{D}_i$ be the free abelian group generated to formal symbols $\tilde{D}_1,\tilde{D}_{\tilde{n}}$ associated to the generators $\bfm_1,\dots, \bfm_{\tilde{n}}$. We call an element in $\Div_T(X,M)$ a torus-invariant divisor on $(X,M)$. There is a natural homomorphism $\pr^*\colon \Div_T(X)\rightarrow \Div_T(X,M)$ given by $$D_i\mapsto \sum_{j=1}^{\tilde{n}} m_{j,i}\tilde{D}_j.$$
	
	\subsection{The Picard group of a pair} \label{section: Picard group pair}
	In this section we introduce the Picard group and the effective cone of a pair.
	\begin{definition}
		We say that a divisor on $(X,M)$ is \textit{principal} if it is the image of a principal divisor on $X$ under the homomorphism $\pr^*_M\colon \Div(X)\rightarrow \Div(X,M)$. We say that two divisors $D,D'$ on $(X,M)$ are \textit{linearly equivalent} if $D-D'$ is a principal divisor.
	\end{definition}
	
	\begin{definition}
		We define the \textit{Picard group} of $(X,M)$ as
		$$\Pic(X,M)=\Div(X,M)/\{\text{principal divisors}\}.$$  
	\end{definition}
	By the definition, the pullback $\pr_M^*\colon \Div(X)\rightarrow \Div(X,M)$ induces a homomorphism
	$$\Pic(X)\rightarrow \Pic(X,M),$$
	which we will also often denote by $\pr^*_M$, by abuse of notation.
	We will denote the induced homomorphisms on $\mathbb{Q}$-divisors $\Div(X)_{\mathbb{Q}}\rightarrow \Div(X,M)_{\mathbb{Q}}$ and on $\mathbb{Q}$-divisor classes $\Pic(X)_{\mathbb{Q}}\rightarrow \Pic(X,M)_{\mathbb{Q}}$ by $\pr^*_M$ as well. For a ($\mathbb{Q}$-)divisor $D$, we will denote the corresponding ($\mathbb{Q}$-)divisor class by $[D]$.
	\begin{definition}
		We define two $\mathbb{Q}$-divisors $D,D'$ on a pair $(X,M)$ to be \textit{$\mathbb{Q}$-linearly equivalent} if the image of $D-D'$ in $\Pic(X,M)_{\mathbb{Q}}$ is $0$.
	\end{definition}
	\begin{example} \label{example: Picard group root stack}
		If $(X,M)$ is a smooth split pair corresponding to Darmon points with associated root stack $(X,\sqrt[\bfm]{D})$ as in \cite[\S 8]{Moe24}, then \cite[Corollary 3.12]{Cad07} implies that the map $\Pic(X,M)\rightarrow \Pic(X,\sqrt[\bfm]{D})$, given by sending $\tilde{D}_{m_i\bfe}$ to the Cartier divisor $\frac{1}{m_i} D_i$ for all $i\in \{1,\dots,n\}$ with $m_i<\infty$, is an isomorphism. Furthermore, this isomorphism is compatible with the pullback homomorphisms of $\Pic(X)\rightarrow \Pic(X,M)$ and $\Pic(X)\rightarrow  \Pic(X,\sqrt[\bfm]{D})$.
	\end{example}
	By taking all multiplicities $m_1,\dots,m_n$ to be infinite, we obtain the following special case of the previous example.
	\begin{example} \label{example: Picard group open}
		Let $X$ be a smooth variety and let $U\subset X$ be an open subvariety such that $X\setminus U$ is a strict normal crossings divisor. If $(X,M)$ is the smooth pair corresponding to integral points on $U$, then $\Div(X,M)=\Div(U)$ so $\Pic(X,M)=\Pic(U)$.% and the natural map $\Pic(X,M)\rightarrow \Pic(U)$ is an isomorphism.
	\end{example}
	By construction of the Picard group of a pair, we have a natural surjection $\Pic(X,M)\rightarrow \Pic(U)$ by restricting a divisor on $(X,M)$ to $U$. More generally, we can define restriction maps between the Picard groups of pairs.
	\begin{definition} \label{def: restriction divisors}
		Let $(X,M)$ and $(X,M')$ be two smooth pairs for the same choice of divisors $D_1,\dots,D_n$, such that $\Gamma_{M',\cC}\subset \Gamma_{M,\cC}$. Then we define the \textit{restriction (of divisors) to $(X,M')$} to be the homomorphism
		$$\Div(X,M)\rightarrow \Div(X,M')$$
		which sends $\Div(X,M')\subset \Div(X,M)$ to itself and sends $D_{(\bfm,c)}$ to $0$ if $(\bfm,c)\in \Gamma_{M,\cC}$ but $\bfm\not\in \Gamma_{M'}$.
		This homomorphism induces a homomorphism
		$$\Pic(X,M)\rightarrow \Pic(X,M'),$$
		which we will also refer to as the \textit{restriction (of divisor classes) to $(X,M')$}.
	\end{definition}
	Note that these restrictions are always surjective homomorphisms.
	The cokernel of the homomorphism $\pr^*_M\colon \Pic(X)\rightarrow \Pic(X,M)$ is generated by $[\tilde{D}_{\bfm,c}]$ for $(\bfm,c)\in\Gamma_{M,\cC}$, and is thus finitely generated. Consequently, if $\Pic(X)$ is finitely generated, then $\Pic(X,M)$ will be as well, as it requires at most $\#\Gamma_{M,\cC}$ additional generators.
	\begin{example} \label{example: Picard group Darmon points}
		If $(X,M)$ is a smooth pair for the Darmon points on the Campana pair $\left(X,\sum_{i=1}^n \left(1-\frac{1}{m_i} \right)D_i\right)$, then $\Div(X,M)$ is naturally identified with $\Div(U)\oplus \mathbb{Z}\left(\frac{1}{m_1} D_1\right)\oplus \dots \oplus \mathbb{Z}\left(\frac{1}{m_n}D_n\right)$, and the homomorphism $\Pic(X)\rightarrow \Pic(X,M)$ is injective with cokernel
		$$\mathbb{Z}/m_1\mathbb{Z}\times\dots\times \mathbb{Z}/m_n\mathbb{Z}.$$
	\end{example}
	\begin{example}
		In the previous example, if we take $X=\mathbb{P}^{n-1}_K$, and we let $D_1,\dots,D_n$ be the coordinate hyperplanes, then $$\Pic(X,M)\cong \mathbb{Z}^n/\left\{(a_1,\dots,a_n)\in \mathbb{Z}^n\,\middle\vert \, \sum_{i=1}^n \frac{a_i}{m_i}=0\right\}.$$
		In particular, if $\gcd(m_i,m_j)=1$ for all distinct $i,j\in \{1,\dots,n\}$, then $\Pic(X,M)\cong \mathbb{Z}$. On the other hand, if $m=m_1=\dots=m_n$, then $\Pic(X,M)\cong \mathbb{Z}\times (\mathbb{Z}/m\mathbb{Z})^{n-1}$. This shows that $\Pic(X,M)$ can contain nontrivial torsion even when $\Pic(X)$ is torsion-free.
	\end{example}
	The previous example showed that the torsion of $\Pic(X,M)$ is slightly subtle, and depends on $M$, rather than only on $X$. However, for a proper pair, the rank of its Picard group is very simple to describe.
	\begin{proposition} \label{prop: rank pic for proper pair}
		If $(X,M)$ is a proper smooth pair over a field $K$, then $\pr^*_M$ is injective on $\Div(X)$ and on $\Pic(X)$. Moreover,
		$$\rank \Pic(X,M)=\rank \Pic(X)+\#\left(\Gamma_{M,\cC} \setminus \{(d_1\mathbf{e}_1,D_1),\dots, (d_n\mathbf{e}_n,D_n)\}\right)/G,$$
		where $d_1,\dots,d_n$ are integers as in Definition \ref{def: proper pair}.
		If $(X,M)$ is split then this simplifies to
		$$\rank \Pic(X,M)= \#\Gamma_{M,\cC}-n.$$
	\end{proposition}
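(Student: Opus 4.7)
The plan is to reduce injectivity to the split case, derive injectivity on $\Pic$ formally, and compute the rank of the cokernel by Galois-equivariant dimension counting over $\mathbb{Q}$.

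For injectivity of $\pr^*_M\colon \Div(X)\to \Div(X,M)$, I would base-change to the splitting field $E$: since $\Div(X) \hookrightarrow \Div(X_E)$ and $\Div(X,M) = \Div(X_E,M)^G \hookrightarrow \Div(X_E,M)$ and $\pr^*_M$ is Galois-equivariant by Remark \ref{remark: valuation Galois invariant}, it suffices to prove injectivity on the split pair $(X_E, M)$. For $D = D_U + \sum_{i=1}^n a_i D_i \in \Div(X_E)$ with $D_U \in \Div(U_E)$, the definition of the pullback gives
\begin{equation*}
\pr^*_M(D) = D_U + \sum_{(\bfm,c) \in \Gamma_{M,\cC}} \left(\sum_{i=1}^n a_i m_i + \mu((\bfm,c), D_U) \right) \tilde{D}_{\bfm,c}.
\end{equation*}
Projection onto the $\Div(U_E)$-summand forces $D_U = 0$. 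To kill the $a_i$'s, invoke properness: each $(d_i\bfe_i, D_i) \in \fM_{\cC}$, and the minimal positive integer $d$ with $(d\bfe_i, D_i) \in \Gamma_{M,\cC}$ exists, because any element $(d\bfe_i, D_i) \in \fM_{\cC}$ decomposes in $\fM_{\cC,\mon}$ only into summands of the form $(d'\bfe_i, D_i)$, by irreducibility of $D_i$. Reading off the coefficient of $\tilde{D}_{(d_i\bfe_i, D_i)}$ in $\pr^*_M(D)$ yields $a_i d_i = 0$, hence $a_i = 0$. Injectivity on $\Pic(X)$ follows formally: if $\pr^*_M(D)$ is principal on $(X,M)$, then $\pr^*_M(D) = \pr^*_M(E)$ for a principal $E \in \Div(X)$, so $D = E$ by divisor-level injectivity.

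For the rank formula, the injection $\Pic(X)_{\mathbb{Q}} \hookrightarrow \Pic(X,M)_{\mathbb{Q}}$ produces a short exact sequence
\begin{equation*}
0 \longrightarrow \Pic(X)_{\mathbb{Q}} \longrightarrow \Pic(X,M)_{\mathbb{Q}} \longrightarrow C \longrightarrow 0,
\end{equation*}
with $C \cong \Div(X,M)_{\mathbb{Q}}/\pr^*_M(\Div(X))_{\mathbb{Q}}$. Since $\pr^*_M$ sends $D \in \Div(U_E)$ to $D$ plus a term in $\bigoplus \mathbb{Q}(\tilde{D}_{\bfm,c})$, and $\Div(U)_{\mathbb{Q}} = \Div(U_E)_{\mathbb{Q}}^G$ by Galois descent for Weil divisors, one identifies
\begin{equation*}
C \cong \left(\bigoplus_{(\bfm,c) \in \Gamma_{M,\cC}} \mathbb{Q}(\tilde{D}_{\bfm,c})\right)^G \Big/ \left\langle \pr^*_M(\Delta_j) \right\rangle_{j=1}^{r},
\end{equation*}
where $\Delta_1,\dots,\Delta_r$ are the $r$ Galois-orbit sums of $D_1,\dots,D_n$. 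The invariants have $\mathbb{Q}$-dimension $\#\Gamma_{M,\cC}/G$, and the vectors $\pr^*_M(\Delta_j)$ are linearly independent: the coefficient of $\tilde{D}_{(d_k\bfe_k, D_k)}$ in $\pr^*_M(\Delta_j)$ equals $d_k$ when $k$ lies in the orbit of $\Delta_j$ and vanishes otherwise, so the vectors have disjoint support on the distinguished basis elements. Hence $\dim_{\mathbb{Q}} C = \#\Gamma_{M,\cC}/G - r$, and since the equivariant choice of the $d_i$'s makes the orbits of $\{(d_i\bfe_i, D_i)\}_{i=1}^n$ correspond bijectively with those of $\{D_1,\dots,D_n\}$, this equals $\#(\Gamma_{M,\cC} \setminus \{(d_i\bfe_i, D_i)\}_{i=1}^n)/G$. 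The split case $G = 1$ simplifies the second term to $\#\Gamma_{M,\cC} - n$.

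The main delicacy is verifying that the $d_i$'s can be chosen in a $G$-equivariant way so that each $(d_i\bfe_i, D_i)$ is an actual element of $\Gamma_{M,\cC}$ (not merely of $\fM_{\cC}$): one needs the minimum $d$ with $(d\bfe_i, D_i) \in \Gamma_{M,\cC}$ to be constant on each Galois orbit of $\{D_1,\dots,D_n\}$, which follows from the $G$-stability of $\fM_{\cC}$ together with the decomposition observation above.
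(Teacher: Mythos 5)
Your proof is correct and rests on the same structural observation as the paper's: the distinguished elements $(d_i\bfe_i,D_i)\in\Gamma_{M,\cC}$ furnished by properness give coordinates in $\Div(X,M)$ on which $\pr^*_M$ can be read off directly, and from which both injectivity and the rank formula follow. For injectivity, your base change to the splitting field $E$ followed by coefficient-reading is the same computation the paper performs in the $G$-invariant picture. The rank computations differ in organization: the paper introduces the auxiliary Darmon-type pair $(X,M')$ with $\fM'=\{\mathbf{0},d_1\bfe_1,\dots,d_n\bfe_n\}$, shows the kernel of the restriction $\Pic(X,M)\to\Pic(X,M')$ is free of the desired rank, and then invokes the computation of $\Pic(X,M')$ from the Darmon example; you instead compute $\dim_{\mathbb{Q}}\coker(\pr^*_M)$ directly by identifying it with $\bigl(\bigoplus_{(\bfm,c)}\mathbb{Q}(\tilde{D}_{\bfm,c})\bigr)^G$ modulo the Galois-orbit relations $\pr^*_M(\Delta_j)$, which are seen to be independent from their disjoint support on the distinguished basis vectors. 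Your route is a bit more self-contained (it avoids appealing to the Darmon Picard-group example), at the cost of handling the $G$-module quotient explicitly; the paper's route is cleaner once the auxiliary pair machinery is in place. You also correctly flag and resolve the delicacy that the $d_i$ can be chosen $G$-equivariantly with $(d_i\bfe_i,D_i)\in\Gamma_{M,\cC}$ rather than merely in $\fM_{\cC}$, a point the paper passes over quickly.
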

	\begin{proof}
		Since $(X,M)$ is proper, for each $i=1,\dots, n$ there exists an element $(d_i\mathbf{e}_i,D_i)\in \Gamma_{M,\cC}$ for some integer $d_i$. In particular, the coefficient of $\tilde{D}_{d_i\mathbf{e}_i,D_i}$ in $\pr_M^*D_i$ is $d_i$, while for all $j\in \{1,\dots,n\}$ different from $i$, the coefficient of $\tilde{D}_{d_i\mathbf{e}_i,D_i}$ in $\pr_M^*D_j$ is zero, as $D_i$ is not contained in $D_j$. Similarly, for all $D\in \Div(U)$, the the coefficient of $\tilde{D}_{d_i\mathbf{e}_i,D_i}$ in $\pr_M^*D$ is also zero. Since, furthermore, the restriction of $\pr_M^*$ to $\Div(U)$ is injective, $\pr^*_M$ is indeed injective. This directly implies that the homomorphism on the Picard groups is also injective.
		
		If we let $(X,M')\subset (X,M)$ be the divisorial pair with $\fM'=\{(0,\dots,0),d_1\mathbf{e}_1,\dots,d_n\mathbf{e}_n\}$, then we can consider the restriction $\Pic(X,M)\rightarrow \Pic(X,M')$. The kernel of this restriction is the free abelian group consisting of Galois-invariant linear combinations of the classes $[\tilde{D}_{\bfm,c}]$ for $(\bfm,c)\in \Gamma_{M,\cC}$ with $\bfm\not\in \Gamma_{M'}$. In particular, it is a free abelian group of rank $\#\left(\Gamma_{M,\cC} \setminus \{(d_1\mathbf{e}_1,D_1),\dots, (d_n\mathbf{e}_n,D_n)\}\right)/G$. By Example \ref{example: Picard group Darmon points}, the cokernel of $\pr^*_{M'}\colon \Pic(X)\rightarrow \Pic(X,M')$ is finite. As $(X,M')$ is proper, this implies $\rank \Pic(X,M)=\rank \Pic(X)$, so the rank of $\Pic(X,M)$ is $\rank \Pic(X)+\#\left(\Gamma_{M,\cC} \setminus \{(d_1\mathbf{e}_1,D_1),\dots, (d_n\mathbf{e}_n,D_n)\}\right)/G$. If $(X,M)$ is split, then the action is trivial and we obtain the desired formula for the rank.
	\end{proof}
	The Picard group of a pair $(X,M)$ injects into the Picard group of the base change $(X_{\overline{K}}, M)$, up to torsion.
	\begin{proposition} \label{prop: Picard group injects into closure}
		Let $(X,M)$ be a smooth pair. Then the natural homomorphism
		$$\Pic(X,M)_{\mathbb{Q}}\rightarrow \Pic(X_{\overline{K}},M)_{\mathbb{Q}}$$
		is an injection.
	\end{proposition}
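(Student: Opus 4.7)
My plan is to prove injectivity via a Galois norm argument, exploiting the Galois-equivariance of the pullback $\pr_M^*$. Suppose $[D] \in \Pic(X,M)_{\mathbb{Q}}$ lies in the kernel; after clearing denominators, I may choose $D \in \Div(X,M)$ and a positive integer $n$ such that
$$nD = \pr_M^*(\mathrm{div}(f)) \quad \text{in } \Div(X_{\overline{K}},M)$$
for some $f \in \overline{K}(X)^\times$. I would then pick a finite Galois extension $H/K$ containing the splitting field $E$ of $(X,M)$ and large enough that $f \in H(X)^\times$, and form the norm $g := N_{H/K}(f) = \prod_{\sigma \in \gal(H/K)} \sigma(f) \in K(X)^\times$.

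The main computation is then
$$\pr_M^*(\mathrm{div}(g)) = \sum_{\sigma \in \gal(H/K)} \pr_M^*\bigl(\sigma\,\mathrm{div}(f)\bigr) = \sum_{\sigma \in \gal(H/K)} \sigma\bigl(\pr_M^*(\mathrm{div}(f))\bigr) = \sum_{\sigma} \sigma(nD) = n[H:K]\,D,$$
where the second equality uses Galois-equivariance of $\pr_M^*$, and the last two use that $D$ is fixed by $\gal(H/K)$ (since $\Div(X,M) = \Div(X_E,M)^{\gal(E/K)}$ and $H \supset E$). Because $g \in K(X)^\times$, the left-hand side is by definition a principal divisor on $(X,M)$, so $n[H:K]\,D$ is principal in $\Div(X,M)$, whence $[D] = 0$ in $\Pic(X,M)_{\mathbb{Q}}$.

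The main technical step, which I would verify as a preliminary lemma, is the Galois-equivariance of $\pr_M^*\colon \Div(X_H) \to \Div(X_H,M)$. I would split this into the two natural types of generators. For the $D_i$'s, the identity $\sigma \pr_M^*(D_i) = \pr_M^*(\sigma D_i)$ follows from $\sigma \tilde{D}_{\bfm,c} = \tilde{D}_{\sigma(\bfm,c)}$ together with the compatible permutation of the indices of the $D_i$'s, by reindexing the sum $\sum_{(\bfm,c)} m_i \tilde{D}_{\bfm,c}$. For prime divisors on $U_H$, equivariance reduces to the compatibility $v_{(\bfm,c)}(h) = v_{\sigma(\bfm,c)}(\sigma h)$ recorded in Remark \ref{remark: valuation Galois invariant}, applied to local equations and extended linearly through the definition of $\mu((\bfm,c),-)$. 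Once this equivariance is established, the norm averaging above gives the descent entirely formally, and no further input about the structure of $\Pic(X)$ (e.g.\ injectivity of $\Pic(X)_{\mathbb{Q}} \to \Pic(X_{\overline{K}})_{\mathbb{Q}}$) is actually needed.
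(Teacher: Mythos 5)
Your proposal is correct and follows essentially the same route as the paper's proof: lift the kernel element to a principal divisor over $\overline{K}$, take the norm down to $K(X)^\times$, and use Galois-invariance of the class and Galois-equivariance of $\pr_M^*$ to conclude that a multiple of the original class is principal. The only difference is that you spell out the Galois-equivariance of $\pr_M^*$ (reducing it to Remark \ref{remark: valuation Galois invariant} and reindexing), a step the paper's proof uses implicitly when it asserts ``the pullback of $\div(\sigma(f))$ is $D$ itself.''
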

	\begin{proof}
		Suppose that $D\in \Div(X,M)$ is an element whose image in $\Pic(X_{\overline{K}},M)$ is trivial. This implies that $D$ is the pullback of $\div f$ for $f\in \overline{K}(X)^\times$. The rational function $f$ is defined over some Galois extension $E/K$, and $\tilde{f}=\prod_{\sigma\in \gal(E/K)} \sigma(f)\in K(X)^\times$. Since $D$ is invariant under the Galois action it follows that the pullback of $\div(\sigma(f))$ to $(X_E,M)$ is $D$ itself. Therefore, it follows that $\deg(E/K)D\in \Div(X,M)$ is a principal divisor, so $[D]\in \Pic(X,M)$ is torsion, as desired.
	\end{proof}
	\begin{remark}
		In general, the Picard group of $(X,M)$ does not need to inject into $(X_{\overline{K}},M)$, as torsion elements may be mapped to zero. For example, if $X=\mathbb{P}^1_{\mathbb{Q}}$ and $D=\{(1:i),(1:-i)\}$, then $\Pic(X\setminus D)\cong \mathbb{Z}/2\mathbb{Z}$, but $\Pic(X_{\overline{\mathbb{Q}}}\setminus D_{\overline{\mathbb{Q}}})\cong 0$.
	\end{remark}
	\subsection{The canonical divisor and the effective cone}
	Now we will define the canonical divisor class of a pair.
	\begin{definition} \label{def: Canonical divisor}
		For a smooth pair $(X,M)$ over a field $K$ of characteristic $0$, we define the \textit{ramification divisor} of $(X,M)$ to be the effective divisor
		$$R:=\sum_{(\bfm,c)\in \Gamma_{M,\cC}} \left(-1+\sum_{i=1}^n m_{i}\right)\tilde{D}_{\bfm,c}.$$
		The \textit{canonical (divisor) class} $K_{(X,M)}\in \Pic(X,M)$ of a smooth pair $(X,M)$ is
		\begin{align*}
			K_{(X,M)}:=&\pr^*_M K_X+R \\
			=&\pr^*_M \left(K_X+\sum_{i=1}^n [D_i]\right)-\sum_{(\bfm,c)\in \Gamma_{M,\cC}} [\tilde{D}_{\bfm,c}].
		\end{align*}
	\end{definition}
	Thus the canonical class is defined in such a way that it satisfies an analogue of the Hurwitz formula for morphisms of curves (see e.g. \cite[Chapter IV, Proposition 2.3]{Har77}). If $(X,M)$ is a smooth pair corresponding to Darmon points on a Campana pair $(X,D_{\bfm})$, then the canonical divisor of $(X,M)$ agrees with the canonical divisor of the root stack $(X,\sqrt[\bfm]{D})$, see for example \cite[Proposition 5.5.6]{VoZB22} for the case when $X$ is a curve.
	% as every $\bfm\in \Gamma_M$ is nonzero so the sum of its components is at least $1$.

	\begin{assumption}
		For the rest of this article we assume that $X$ is a rationally connected proper variety.% In addition, we assume that the effective cone $\Eff^1(X)$ is rational polyhedral.
	\end{assumption}
	There are two reasons for this assumption. One reason is that Conjecture \ref{conj: generalized Manin conjecture} only applies to rationally connected varieties. Furthermore, if $X$ is rationally connected then the Albanese variety of $X$ is trivial, and thus its dual $\Pic^0(X)$ is also trivial. Hence, $\Pic(X,M)$ is a finitely generated abelian group for every smooth pair $(X,M)$.
	\begin{definition}
		The \textit{effective cone of a smooth pair $(X,M)$} is the cone $$\Eff^1(X,M)\subset \Pic(X,M)_{\mathbb{R}}$$ generated by effective divisors on $(X,M)$. Its topological closure is the \textit{pseudo-effective cone of $(X,M)$} $$\overline{\Eff}^1(X,M)\subset \Pic(X,M)_{\mathbb{R}}.$$
		We also similarly write $\Eff^1(X)$ and $\overline{\Eff}^1(X)$ for the effective and pseudo-effective cones of $X$.
	\end{definition}
	
	The effective cone of a proper pair is strictly convex, as the next proposition shows.
	\begin{proposition} \label{prop: effective cone strictly convex}
		Let $(X,M)$ be a smooth proper pair. Then $\Eff^1(X,M)$ is strictly convex, i.e. $$\Eff^1(X,M)\cap -\Eff^1(X,M)=\{\mathbf{0}\}.$$
	\end{proposition}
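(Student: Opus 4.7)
The plan is to suppose $\alpha \in \Eff^1(X,M) \cap -\Eff^1(X,M)$, write $\alpha = [A_1]$ and $-\alpha = [A_2]$ for effective $\mathbb{R}$-divisors $A_1, A_2$ on $(X,M)$, and deduce $A_1 = A_2 = 0$. Since $[A_1 + A_2] = 0$ in $\Pic(X,M)_{\mathbb{R}}$ and principal divisors on the pair are by definition images of principal divisors on $X$ under $\pr_M^*$, we can write $A_1 + A_2 = \pr_M^*(D)$ for some $D$ in the $\mathbb{R}$-span of principal divisors of $X$. By Proposition~\ref{prop: rank pic for proper pair}, $\pr_M^*$ is injective on $\Div(X)$, hence also on $\Div(X)_{\mathbb{R}}$, so $D$ is unique and the problem reduces to showing $D = 0$.

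The heart of the argument is to show that $D$ is itself an effective $\mathbb{R}$-divisor on $X$. After base-changing to the splitting field $E/K$ of $(X,M)$, decompose $D_E = F + \sum_{i=1}^n c_i D_i$ with $F \in \Div(U_E)_{\mathbb{R}}$ supported away from $D_1, \ldots, D_n$, and use the formula
\[
\pr_{M,E}^*(D_E) = F + \sum_{(\bfm,c) \in \Gamma_{M,\cC}} \Bigl( \mu((\bfm,c), F) + \sum_{i=1}^n c_i m_i \Bigr) \tilde{D}_{\bfm,c}.
\]
The $\Div(U_E)_{\mathbb{R}}$-component of this divisor equals $F$, which must be effective since $(A_1 + A_2)_E$ is. For each $i$, properness of $(X,M)$ provides a generator $(d_i \bfe_i, D_i) \in \Gamma_{M,\cC}$ with $d_i > 0$; the valuation $v_{(\bfe_i, D_i)}$ coincides with the divisorial valuation $v_{D_i}$ and therefore vanishes on local equations of prime divisors distinct from $D_i$, so $\mu((d_i \bfe_i, D_i), F) = 0$. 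Consequently the coefficient of $\tilde{D}_{d_i \bfe_i, D_i}$ in $A_1 + A_2$ reduces to $d_i c_i$, which must be nonnegative, forcing $c_i \geq 0$. Thus $D_E$, and hence $D$, is effective.

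To conclude, since $X$ is a smooth proper rationally connected variety in characteristic zero, it is projective; fix an ample class $H$ on $X$. On the one hand $D \cdot H^{n-1} = 0$ because $D$ is an $\mathbb{R}$-combination of principal divisors, while on the other hand writing $D = \sum_P c_P P$ with $c_P \geq 0$ gives $D \cdot H^{n-1} = \sum_P c_P (P \cdot H^{n-1})$, a sum of nonnegative terms with each factor $P \cdot H^{n-1} > 0$. This forces every $c_P = 0$, so $D = 0$; by the injectivity of $\pr_M^*$ this yields $A_1 + A_2 = 0$, and effectiveness of both summands gives $A_1 = A_2 = 0$ and hence $\alpha = 0$.

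The main obstacle is the effectiveness step: the coefficient of each $\tilde{D}_{\bfm,c}$ in $\pr_{M,E}^*(D_E)$ entangles the $D_i$-coefficients of $D$ with the nonlinear function $\mu((\bfm,c), F)$, and disentangling them to sign-check each $c_i$ individually requires exploiting the particular generators $(d_i \bfe_i, D_i)$ supplied by properness, together with the vanishing of $v_{D_i}$ on local equations of the other prime divisors.
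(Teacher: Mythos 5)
Your argument is structured quite differently from the paper's: instead of arguing by contradiction and using density of rational classes, you work forward and show directly that the preimage $D$ under $\pr_M^*$ of $A_1+A_2$ is itself an effective divisor on $X$. That effectiveness step is correct, and it is the same observation the paper makes implicitly (there, a rational function $f$ with $\pr_M^*\div(f)$ effective is shown to have no poles along $U$ nor along the $D_i$, using exactly the generators $m_i\bfe_i$ supplied by properness). Where the two proofs diverge is in how they close: the paper stays with integral divisors and concludes that a regular function on a connected proper variety is constant, whereas you conclude via intersection numbers on a projective variety.

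That final step is where the gap lies. You assert that a smooth proper rationally connected variety in characteristic zero is projective, but this is false: Hironaka's classical threefolds are smooth, proper, birational to $\P^3$ (hence rational and rationally connected), yet not projective, and the paper's standing assumption on $X$ is properness, not projectivity. Without an ample class $H$ the computation $D\cdot H^{n-1}=0$ has no meaning. A repair consistent with the rest of your argument is to drop back from $\mathbb{R}$- to integral coefficients before concluding: the set of effective $\mathbb{R}$-divisors supported on $\Supp(D)$ and lying in the $\mathbb{R}$-span of principal divisors is a rational polyhedral cone (the positive orthant intersected with a rational linear subspace of the finite-dimensional space spanned by $\Supp(D)$), so if $D\neq 0$ it contains a nonzero rational, and after clearing denominators a nonzero integral, effective divisor $D'$. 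Then $[D']$ is torsion in $\Pic(X)$, so $mD'=\div(f)$ for some $m>0$ and $f\in K(X)^\times$; since $\div(f)\geq 0$ and $X$ is smooth, $f$ is regular, hence constant because $X$ is proper and connected, giving $D'=0$, a contradiction. This is essentially how the paper finishes, and it uses only properness.
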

	\begin{proof}
		We assume that $K$ is algebraically closed so $(X,M)$ is split, which we can do without loss of generality by Proposition \ref{prop: Picard group injects into closure}.
		We will argue by contradiction. Suppose that there exists an element $E\in \Eff^1(X,M)$ satisfying $-E\in \Eff^1(X,M)$. Since $\Eff^1(X,M)$ is generated by effective divisors, $\Eff^1(X,M)\cap \mathbb{Q}$ is dense in $\Eff^1(X,M)$. This implies that there exists an integer $m$ and nonzero effective divisors $D_1,D_2$ on $(X,M)$ such that $mE-[D_1], -mE-[D_2]\in \Eff^1(X,M)$. The sum $D:=D_1+D_2$ is a nonzero effective divisor such that $-[D]\in \Eff^1(X,M)$. This implies that $-D$ is $\mathbb{Q}$-linearly equivalent to an effective $\mathbb{Q}$-divisor $D'$. It follows that $D+D'$ is an effective $\mathbb{Q}$-divisor which is $\mathbb{Q}$-linearly equivalent to $0$. Thus there exists a positive integer $m$ such that $m(D+D')$ is linearly equivalent to an effective divisor and $m(D+D')=\pr_M^* \div(f)$ for some rational function on $X$. The function $f$ cannot have any poles on $U$, and since $(X,M)$ is proper, there exist positive integers $m_1,\dots, m_n$ such that $m_1\bfe_1,\dots, m_n\bfe_n\in \Gamma_M$. This implies that $f$ cannot have any poles at the divisors $D_1,\dots,D_n$ either, which implies that $f$ is a regular function on $X$ and thus $f$ is constant, as $X$ is proper. We conclude that $D+D'=0$. This contradicts the fact that $D$ is nonzero, so $E$ cannot exist.
	\end{proof}
	%If $\Eff^1(X)=\overline{\Eff}^1(X)$, then the definition immediately implies $\Eff^1(X,M)=\overline{\Eff}^1(X,M)$. Similarly if $\overline{\Eff}^1(X)$ is a rational polyhedral cone, then $\overline{\Eff}^1(X,M)$ is rational polyhedral as well.
	
	%\end{assumption}
	%    From now on, we will always assume that $\Eff^1(X)=\overline{\Eff}^1(X)$. This satisfied when $\Eff^1(X)$ is a rational polyhedral cone, for example.
	%\end{assumption}
	Using the effective cone, we will now define the Fujita invariant and the $b$-invariant of a pair.
	\begin{definition} \label{def: Fujita invariant}
		Let $(X,M)$ be a smooth pair over a field $K$ of characteristic $0$. Let $L$ be a nef and big $\mathbb{Q}$-divisor class on $X$. We define the \textit{Fujita invariant} of $(X,M)$ with respect to $L$ to be
		$$a((X,M),L)=\mathrm{inf}\{t\in \mathbb{R}\mid t\pr_M^* L+K_{(X,M)}\in \overline{\Eff}^1(X,M)\}.$$
		We call $a((X,M),L)\pr_M^* L+K_{(X,M)}\in \overline{\Eff}^1(X,M)$ the \textit{adjoint divisor class of $L$ with respect to $(X,M)$}.
		We define the \textit{$b$-invariant} $b(K,(X,M),L)$ to be the codimension of the minimal supported face of $\overline{\Eff}^1(X,M)$ which contains the adjoint divisor class of $L$ with respect to $(X,M)$.
	\end{definition}
	Note that the Fujita invariant is strictly positive if and only if $K_{(X,M)}$ is not pseudoeffective.
	
	There need not exist a nef and big $\mathbb{Q}$-divisor class $L$ such that $\pr_M^*L=-K_{(X,M)}$, so the $b$-invariant may be strictly smaller than the rank of the Picard group of $(X,M)$ for all choices of $L$, as the next example illustrates.
	\begin{example}
		Let $(X,M)$ be the smooth split pair over $K$ corresponding to the Campana points on $\left(X,\sum_{i=1}^n\left(1-\frac{1}{m_i}\right)D_i\right)$ for a rationally connected variety $X$. Then Proposition \ref{prop: rank pic for proper pair} implies that
		$$\rank \Pic(X,M)=\rank\Pic(X)+\sum_{i=1}^n (m_i-1),$$
		where we use the description of $\Gamma_M$ from Example \ref{example: generators Darmon and Campana} and the fact that the set of components $\cC_{m\bfe_i}$ is just $\{D_i\}$ for any positive integer $m$ and $i\in \{1,\dots, n\}$.
		However, for any divisor $L\in \Div(X)$, the coefficient of $\tilde{D}_{m\bfe_i}$ in $L$ for $m_i\leq m\leq 2m_i-1$ is simply $a_im$, where $a_i$ is the coefficient of $D_i$ in $L$. This follows from the equality \eqref{eq: concave 2} combined with the fact that $\mu(m \bfe_i, D)=0$ for all prime divisors $D\neq D_i$ on $X$. Therefore, if the coefficient of $\tilde{D}_{m_i\bfe_i}$ in $\pr_M^* L+R=\pr_M^* L+\sum_{i=1}^n \sum_{m=m_i}^{2m_i-1} (m-1) \tilde{D}_{m\bfe_i}$ is nonnegative for some $i\in \{1,\dots, n\}$, then the coefficient of $\tilde{D}_{m\bfe_i}$ in $\pr_M^* L+R$ is positive for integers $m$ with $m_i<m\leq 2m_i-1$.
		Consequently, we have $b(K,(X,M),L)\leq \rank \Pic(X)$ for all big and nef divisors $L$.
	\end{example}
	In fact, for computing the Fujita invariant and the $b$-invariant, we only need to consider small generators $(\bfm,c)\in \Gamma_{M,\cC}$, which often considerably simplifies computations. In particular, we only need to consider the minimal elements for the partial ordering on $\Gamma_{M,\cC}$ given by $(\bfm,c)\leq(\bfm',c')$ if $\bfm\leq \bfm'$ and $c\supset c'$. The following lemma shows that we need to consider even less elements in $\Gamma_{M,\cC}$.  For a stratum $c$ we set $$\Gamma_{M,c}=\{(\bfm,\tilde{c})\in \Gamma_{M,\cC}\mid c\subset \tilde{c} \}.$$
	\begin{lemma} \label{lemma: reduction to small generators}
		Let $(X,M)$ be a smooth pair over a field $K$ and let $L$ be a big and nef $\mathbb{Q}$-divisor on $X$. For a stratum $c$, let $P_c$ be the polyhedron given as the convex hull of the set $$\{\bfm+\bfx\in\mathbb{R}^n\mid (\bfm, \tilde{c})\in\Gamma_{M,c}, \bfx\in \mathbb{R}_{\geq 0}^n\}$$ and let $\partial P_c$ and $V(P_c)$ be its boundary and its set of vertices, respectively. Define pairs $(X,M'')\subset (X,M')\subset (X,M)$ by setting
		$$\fM'_{\cC}=\{(\bfm,\tilde{c}) \in \Gamma_{M,\cC}\mid \text{there is $c$ such that } c\subset \tilde{c}, \bfm\in \partial P_c\}$$ and 
		$$\fM''_{\cC}=\{(\bfm,\tilde{c}) \in \Gamma_{M,\cC}\mid \text{there is $c$ such that } c\subset \tilde{c}, \bfm\in V(P_c)\}.$$
		Then
		$$a((X,M),L)=a((X,M'),L)=a((X,M''),L)$$
		and
		$$b(K,(X,M),L)=b(K,(X,M'),L).$$
	\end{lemma}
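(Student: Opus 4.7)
My plan is to establish both equalities by combining functoriality of restriction maps with the concavity of Lemma \ref{lemma: concave}. The inclusions $(X,M'')\subset(X,M')\subset(X,M)$ give rise, via Definition \ref{def: restriction divisors}, to surjective restriction homomorphisms $r'\colon\Pic(X,M)_{\mathbb{R}}\twoheadrightarrow\Pic(X,M')_{\mathbb{R}}$ and $r''\colon\Pic(X,M)_{\mathbb{R}}\twoheadrightarrow\Pic(X,M'')_{\mathbb{R}}$; a routine check from the coefficient formulas shows that they intertwine pullbacks of divisors with the canonical classes of pairs, and hence carry adjoint classes to adjoint classes. Since prime divisors of $(X,M)$ map either to prime divisors or to zero, the pseudo-effective cones are mapped into one another, giving the easy inequalities $a((X,M),L)\geq a((X,M'),L)\geq a((X,M''),L)$ and $b(K,(X,M),L)\geq b(K,(X,M'),L)$.

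For the reverse inequality on the Fujita invariant, fix $t>a((X,M''),L)$ and pick $f\in K(X)^{\times}$ making
\[
\pr^*_{M''}D-\sum_{(\bfm_j,c_j)\in\fM''_{\cC}}\tilde{D}_{\bfm_j,c_j}
\]
an effective representative of $t\pr^*_{M''}L+K_{(X,M'')}$, where $D:=tL+K_X+\sum_{i=1}^n D_i+\div f\in\Div(X)_{\mathbb{Q}}$. Effectivity amounts to $D|_U$ being effective and $\mu((\bfm_j,c_j),D)\geq 1$ for every vertex $(\bfm_j,c_j)\in\fM''_{\cC}$. Properness of $(X,M)$ (Definition \ref{def: proper pair}) furnishes for each $i$ an axial vertex $d_i^*\bfe_i\in V(P_{D_i})\subset\fM''_{\cC}$, from which $d_i^*v_{D_i}(D)=\mu((d_i^*\bfe_i,D_i),D)\geq 1$ yields $v_{D_i}(D)>0$, so $D$ is effective on $X$. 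For any $(\bfm,c)\in\Gamma_{M,\cC}$, I decompose $\bfm=\sum_j\lambda_j\bfm_j+\bfx$ with $(\bfm_j,c_j)\in V(P_c)$, $c\subset c_j$, $\lambda_j\in\mathbb{Q}_{\geq 0}$, $\sum_j\lambda_j=1$, $\bfx\in\mathbb{Q}^n_{\geq 0}$ (possible since $P_c$ is a rational polytope), and iterate Lemma \ref{lemma: concave} after clearing denominators---using also the scaling identity $\mu((d\bfm,c),D)=d\mu((\bfm,c),D)$---to obtain
\[
\mu((\bfm,c),D)\geq\sum_j\lambda_j\mu((\bfm_j,c_j),D)+\sum_i x_iv_{D_i}(D)\geq 1.
\]
Hence $\pr^*_M D-\sum_{(\bfm,c)\in\Gamma_{M,\cC}}\tilde{D}_{\bfm,c}$ is an effective representative of $t\pr^*_M L+K_{(X,M)}$, yielding $a((X,M),L)\leq t$; letting $t\downarrow a((X,M''),L)$ closes the Fujita chain.

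For the $b$-invariant, set $a:=a((X,M),L)$, $\alpha:=a\pr^*_M L+K_{(X,M)}$, and let $F\subset\overline{\Eff}^1(X,M)$ and $F'\subset\overline{\Eff}^1(X,M')$ be the minimal supported faces containing $\alpha$ and $r'(\alpha)$. The key claim is $[\tilde{D}_{\bfm,c}]\in F$ for every $(\bfm,c)\in\Gamma_{M,\cC}\setminus\fM'_{\cC}$: since such $\bfm$ lies strictly in the interior of $P_c$, the decomposition can be chosen with $\bfx\in\mathbb{Q}^n_{>0}$, so the concavity estimate with $t>a$ produces an effective representative whose coefficient on $\tilde{D}_{\bfm,c}$ is bounded below by $\sum_i x_iv_{D_i}(D)>0$; a passage to the limit $t\downarrow a$, using closedness of the pseudo-effective cone, then yields $\alpha-\delta[\tilde{D}_{\bfm,c}]\in\overline{\Eff}^1(X,M)$ for small $\delta>0$, which is precisely the condition $[\tilde{D}_{\bfm,c}]\in F$. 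The kernel of $r'$ is generated (over Galois orbits) by exactly these classes, so $\ker r'\subset\mathrm{span}(F)$; a complementary lifting argument, using the same strict positivity to subtract small multiples of lifted effective representatives from $(X,M')$ to $(X,M)$ without losing effectivity, shows that every $\beta'\in F'$ lifts to some $\beta\in F$, whence $r'(F)=F'$. A dimension count then gives $\dim F'=\dim F-\dim\ker r'$, so $\mathrm{codim}\,F=\mathrm{codim}\,F'$ and $b(K,(X,M),L)=b(K,(X,M'),L)$. The main obstacle I anticipate is the concavity step: Lemma \ref{lemma: concave} is stated only for effective divisors, so verifying effectivity of the auxiliary divisor $D$ on all of $X$---which crucially uses properness of $(X,M)$ to supply the axial vertices controlling $v_{D_i}(D)$---is essential before the iterated concavity inequality applies; the limit $t\downarrow a$ in the $b$-invariant step also requires care, but the uniform strict positivity of $\sum_i x_iv_{D_i}(D)$ on interior generators provides sufficient slack.
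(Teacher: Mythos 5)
Your proof and the paper's proof share the same core: apply the concavity of $\mu$ (Lemma \ref{lemma: concave}) to show that for any $(\bfm,c)\in\Gamma_{M,\cC}$ with $\bfm$ interior to $P_c$, every effective $\mathbb{Q}$-divisor representing the adjoint class has strictly positive coefficient on $\tilde{D}_{\bfm,c}$, so that this class lies in the minimal face. The difference lies in how the decomposition of $\bfm$ is organized. The paper asserts a decomposition $\bfm=\sum_t\lambda_t\bfm_t$ with $\bfm_t\in\Gamma_{M',c}$ and $\sum_t\lambda_t>1$, and the ``excess'' $\sum_t\lambda_t-1>0$ directly gives the positive coefficient. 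You instead decompose $\bfm=\sum_j\lambda_j\bfm_j+\bfx$ with $\sum_j\lambda_j=1$ and $\bfx\in\mathbb{Q}^n_{\geq 0}$ (with $\bfx>0$ when $\bfm$ is interior), and the positive contribution comes from $\sum_i x_iv_{D_i}(D)$, which you control by invoking properness of $(X,M)$ to produce axial vertices $d_i\bfe_i\in V(P_{D_i})$. The two routes are close, but yours makes explicit where the positivity comes from, which the paper leaves implicit.

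One genuine concern: the lemma only assumes $(X,M)$ is a smooth pair, not a proper pair, yet your argument explicitly invokes Definition \ref{def: proper pair} to produce the axial generators $d_i^*\bfe_i\in\fM''_{\cC}$; without properness these need not exist. However, this is not unique to your proof: the paper's claimed decomposition $\bfm=\sum_t\lambda_t\bfm_t$ with $\sum_t\lambda_t>1$ also fails without axial generators (for instance, take $\Gamma_{M,c}=\{(1,0),(2,1)\}$ with $(2,1)$ interior to $P_c=(1,0)+\mathbb{R}^2_{\geq 0}$; then $(2,1)$ is not a positive multiple of $(1,0)$). In that degenerate situation the effective cone $\Eff^1(X,M)$ fails to be strictly convex (cf.\ Proposition \ref{prop: effective cone strictly convex}), the Fujita invariant degenerates, and the lemma becomes vacuous, so the hypothesis is presumably intended to be in force; you have simply surfaced an implicit assumption rather than introduced a new gap. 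Aside from this, your intermediate steps (the concavity chain, the uniform lower bound $v_{D_i}(D)\geq 1/d_i^*$ independent of $t$, and the passage to the limit $t\downarrow a$) are sound, and your treatment of the $b$-invariant via $r'(F)=F'$ and a dimension count is a more careful rendering of the argument the paper states in a single sentence.
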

	\begin{remark} \label{remark: reduction to small generators divisorial}
		Note that here it suffices to consider the smallest strata for the polyhedra, since if $c\subset c'$, then $P_{c'}\subset P_c$. Furthermore, for a divisorial pair $(X,M)$, the pairs $(X,M')$ and $(X,M'')$ are both divisorial, with sets of multiplicities $\fM'=\Gamma_M \cap \partial P$ and $\fM''=\Gamma_M \cap V(P)$, where $P$ is the polyhedron given as the convex hull of the set $$\{\bfm+\bfx\in\mathbb{R}^n\mid \bfm \in\Gamma_{M}, \bfx\in \mathbb{R}_{\geq 0}^n\}.$$
	\end{remark}
	\begin{proof}
		If $\Gamma_{M,\cC}=\emptyset$, then $(X,M)=(X,M')=(X,M'')$ is the divisorial pair with $\fM=\{(0,\dots,0)\}$, so the lemma is trivially satisfied. Now assume $\Gamma_{M,\cC}\neq \emptyset$ and $c$ is a stratum, and let $(\bfm,\tilde{c})\in \Gamma_{M,c}$ be an element such that $\bfm$ is not on the boundary of the polyhedron $P_c$. Then there exist $(\bfm_1,c_1),\dots,(\bfm_T,c_T)\in \Gamma_{M',c}$ such that $\bfm=\sum_{t=1}^T \lambda_t\bfm_t$ for some real numbers $\lambda_1,\dots,\lambda_T>0$ satisfying $\sum_{t=1}^T\lambda_t>1$. If $L'\in \Div(X)_{\mathbb{Q}}$ is a $\mathbb{Q}$-divisor, such that for all $t\in \{1,\dots,T\}$ the coefficient of $\tilde{D}_{\bfm_t,c_t}$ in $\pr^*_M L'\in \Div(X,M)$ is at least $1$, then Lemma \ref{lemma: concave} implies that the coefficient of $\tilde{D}_{\bfm,c}$ in $\pr^*_M L'\in \Div(X,M)$ is at least $\sum_{t=1}^T\lambda_t>1$.
		This implies that for any $L\in \Pic(X)_{\mathbb{Q}}$ and $a\in \mathbb{Q}$ such that $$a\pr^*_M L+K_{(X,M)}=\pr^*_M\left(L+K_X+\sum_{i=1}^n [D_i]\right)-\sum_{(\bfm,c)\in \Gamma_{M,\cC}}[\tilde{D}_{\bfm,c}]\in \Pic(X,M)_{\mathbb{Q}}$$
		is an effective $\mathbb{Q}$-divisor class, $a\pr^*_M L+K_{(X,M)}$ is represented by an effective $\mathbb{Q}$-divisor such that the coefficient of $\tilde{D}_{\bfm,c}$ is at least $-1+\sum_{t=1}^T\lambda_t>0$.
		This implies that the minimal face of $\overline{\Eff}^1(X,M)$ containing the adjoint divisor class of an big and nef $\mathbb{Q}$-divisor class $L$ contains $\tilde{D}_{\bfm,c}$. Thus we see that $a((X,M),L)=a((X,M'),L)$ and $b(K,(X,M),L)=b(K,(X,M'),L)$, as desired.
		The proof of the equality $a((X,M),L)=a((X,M''),L)$ is entirely analogous.
	\end{proof}
	More generally, the Fujita invariant and the $b$-invariant are smaller on smaller pairs.
	\begin{proposition} \label{prop: Fujita invariant inclusion}
		Let $(X,M)\subset(X,M')$ be smooth pairs over a field $K$ of characteristic $0$ and let $L$ be a big and nef $\mathbb{Q}$-divisor on $X$. Then we have
		$$(a((X,M),L), b(K,(X,M),L))\leq (a((X,M'),L), b(K,(X,M'),L))$$
		in the lexicographic ordering.
	\end{proposition}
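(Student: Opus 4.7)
The proposition breaks into two monotonicity claims, $a((X,M),L)\leq a((X,M'),L)$ and (assuming equality here) $b(K,(X,M),L)\leq b(K,(X,M'),L)$, which I address in turn.

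The plan for the Fujita invariant is to first unwind Definition~\ref{def: Canonical divisor} to record the following characterization: for any smooth pair $(X,N)$, the class $t\pr^*_N L+K_{(X,N)}$ is effective on $(X,N)$ if and only if there exists a $\mathbb{Q}$-divisor $\tilde A$ on $X$ in the class of $tL+K_X+\sum_i D_i$ satisfying $\tilde A|_U\geq 0$ and $\mu((\bfm,c),\tilde A)\geq 1$ for every $(\bfm,c)\in \Gamma_{N,\cC}$, with pseudo-effectivity obtained by letting $t\to a((X,N),L)^+$. Up to equivalence of pairs, the inclusion $(X,M)\subset(X,M')$ translates to $\fM_{\cC}\subset\fM'_{\cC}$, so every $(\bfm,c)\in \Gamma_{M,\cC}$ lies in $\fM'_{\mon,\cC}$ and decomposes as $\bfm=\sum_i \bfm'_i$ with $(\bfm'_i,c'_i)\in \Gamma_{M',\cC}$ and $c\subset \bigcap_i c'_i$. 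Applying Lemma~\ref{lemma: concave}, whose concavity statement extends to $\mathbb{Q}$-divisors effective on $U$ by splitting the divisor into its effective part plus a linear combination of the $D_i$ (on which $\mu$ is linear), then yields $\mu((\bfm,c),\tilde A)\geq \sum_i \mu((\bfm'_i,c'_i),\tilde A)\geq 1$. Hence any witness $\tilde A$ on $(X,M')$ for $t>a((X,M'),L)$ is automatically one on $(X,M)$, giving $a((X,M),L)\leq a((X,M'),L)$ in the limit.

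For the $b$-invariant, assume equal Fujita invariants with common value $a$. Let $F'\subset \overline{\Eff}^1(X,M')$ and $F\subset \overline{\Eff}^1(X,M)$ denote the minimal faces containing the adjoint classes $a\pr^*_{M'}L+K_{(X,M')}$ and $a\pr^*_M L+K_{(X,M)}$, of codimensions $b'$ and $b$, and let $V',V$ be their $\mathbb{R}$-linear spans. The plan is to construct a surjective linear map $\phi\colon V'\to V$ compatible with the pullback injections $\pr^*_{M'}\Pic(X)_{\mathbb{R}}\hookrightarrow V'$ and $\pr^*_M\Pic(X)_{\mathbb{R}}\hookrightarrow V$, and whose kernel is contained in the span of the classes $[\tilde{D}_{\bfm',c'}]$ for $(\bfm',c')\in \Gamma_{M',\cC}\setminus\Gamma_{M,\cC}$. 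Surjectivity should follow from the concavity mechanism of the first step applied to effective perturbations of the adjoint class, and the bound on the kernel reflects that the extra linear directions appearing in $V'$ arise precisely from the new generators of $M'$. A dimension count will then give $\dim V'-\dim V\leq \rank\Pic(X,M')_{\mathbb{R}}-\rank\Pic(X,M)_{\mathbb{R}}$, which is exactly $b\leq b'$.

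The main obstacle is the construction of $\phi$ in the second step. The subtlety is that $\Gamma_{M',\cC}$ is not simply $\Gamma_{M,\cC}$ augmented by new generators from $\fM'_{\cC}\setminus\fM_{\cC}$: enlarging $\fM_{\cC}$ may cause previously indecomposable elements of $\Gamma_{M,\cC}$ to become decomposable in $M'$ and so drop out of $\Gamma_{M',\cC}$. My intended remedy is to first reduce both pairs to their minimal-generator forms via Lemma~\ref{lemma: reduction to small generators} and then induct on the number of elements in $\fM'_{\cC}\setminus\fM_{\cC}$, at which point the combinatorial comparison of the two generating sets should become manageable.
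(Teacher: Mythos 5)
Your argument for the Fujita invariant is correct and matches the paper's intended approach: Lemma~\ref{lemma: concave} shows that any effective representative of the $(X,M')$-adjoint class pulls back to an effective representative of the $(X,M)$-adjoint class, since each $(\bfm,c)\in\Gamma_{M,\cC}$ decomposes as a (possibly trivial) sum of elements of $\Gamma_{M',\cC}$. Your aside about extending concavity beyond effective divisors by splitting off a linear combination of the $D_i$, on which $\mu$ is linear, is a reasonable and correct precaution (though in fact one can check that the relevant representatives are already effective, because the conditions $\mu((d_i'\bfe_i,D_i),\tilde A)\geq 1$ force positive coefficients along the $D_i$).

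The $b$-invariant half, however, has a genuine gap, which you yourself flag. Two concrete issues. First, the map $\phi\colon V'\to V$ you want is not there to be found in the ambient picture: since $\fM_{\cC}\subset\fM'_{\cC}$ does not imply $\Gamma_{M,\cC}\subset\Gamma_{M',\cC}$ or the reverse, there is no natural linear map between $\Pic(X,M')$ and $\Pic(X,M)$ in either direction that you could restrict, so the construction of $\phi$ would have to be ad hoc and is not sketched. Second, even granting $\phi$ with kernel bounded by the span of $\{[\tilde{D}_{\bfm',c'}]:(\bfm',c')\in\Gamma_{M',\cC}\setminus\Gamma_{M,\cC}\}$, the dimension count you state does not close: by Proposition~\ref{prop: rank pic for proper pair}, $\rank\Pic(X,M')-\rank\Pic(X,M)$ is $\#(\Gamma_{M',\cC}\setminus\Gamma_{M,\cC})/G-\#(\Gamma_{M,\cC}\setminus\Gamma_{M',\cC})/G$, so the generators of $M$ that become decomposable and fall out of $\Gamma_{M',\cC}$ (exactly the subtlety you identify) cost you in the rank comparison and must be accounted for on the $V$ side, not merely in $\ker\phi$. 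The paper's one-line proof points instead at the mechanism of Lemma~\ref{lemma: reduction to small generators}: for every $(\bfm,c)\in\Gamma_{M,\cC}$ that decomposes into two or more elements of $\Gamma_{M',\cC}$, concavity forces any effective representative of the $(X,M)$-adjoint class obtained from an $(X,M')$-witness to have strictly positive coefficient at $\tilde{D}_{\bfm,c}$, so the minimal face $\fF\subset\overline{\Eff}^1(X,M)$ contains that ray; one then compares codimensions by exploiting these extra rays in $\fF$ rather than by building a linear map $V'\to V$. As written, your proof establishes only the Fujita half.
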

	\begin{proof}
		This is essentially the same argument as in Lemma \ref{lemma: reduction to small generators} by using Lemma \ref{lemma: concave}.
	\end{proof}
	
	\begin{remark}
		By Lemma \ref{lemma: reduction to small generators}, the Fujita invariant and the $b$-invariant only depend on the polyhedra generated by $\fM_{\cC}$, rather than $\fM_{\cC}$ itself. Therefore the Conjecture \ref{conj: generalized Manin conjecture} satisfies a form of purity: the order of growth of the counting function only depends on the smallest elements in $\fM_{\cC}$.% This can be viewed as an analogue of various purity phenomena. %For example, the conjecture implies that for any closed subset $V$ contained in the intersection of two divisors intersecting transversally, the number of integral points on $\cX$ not intersecting the Zariski closure of $V$ anywhere up to height $B$ is asymptotically equal to
		%$$CB^{a(X,L)}(\log B)^{b(K,X,L)-1}.$$
	\end{remark}
	\begin{example}
		As a simple example for the previous remark, Conjecture \ref{conj: generalized Manin conjecture} implies that the $\mathbb{Q}$-rational points in projective space with coordinates both squarefree and pairwise coprime have a positive density in the full set of rational points, when the points are ordered by their Weil height.
	\end{example}
	There is a natural generalization of rigid divisors to pairs.
	\begin{definition} \label{def: adjoint rigid}
		Let $D\in \Div(X,M)_\mathbb{Q}$ be a $\mathbb{Q}$-divisor on a smooth pair $(X,M)$ with $X$ rationally connected. We say that $D$ is \textit{rigid} if $D$ is effective and it is the only effective $\mathbb{Q}$-divisor in its $\mathbb{Q}$-linear equivalence class. For a big and nef $\mathbb{Q}$-divisor $L$ on $X$, we say that $L$ is \textit{adjoint rigid} with respect to $(X,M)$ if $a((X,M),L)>0$ and the adjoint divisor class $a((X,M),L)\pr_M^* L+K_{(X,M)}$ is represented by a rigid $\mathbb{Q}$-divisor.
	\end{definition}
	
	%Counting $\cM$-points of bounded height on proper pairs appears to be easier than for general pairs, as there are various complications that show up when counting integral points compared to counting (klt) Campana points. For example, in formulating a conjecture for the number of integral points of bounded height, the definition of the effective cone has to be modified. Furthermore there are analytic obstructions to the Zariski-density of integral points \cite{Wil24, San23-2}.

	\section{Rationally connected pairs} 
	\label{section: Manin's conjecture for M-points}
	%Now we will use the Fujita invariant and the $b$-invariant to formulate a conjecture on the number of $\cM$-points of bounded height, which generalizes of Manin's Conjecture on rational points of bounded height to $\cM$-points.
	In modern times, Manin's Conjecture is often formulated for rationally connected varieties, see for example \cite[Conjecture 1.2]{LeSeKuSh22}. The notion of rationally connected varieties (see e.g. \cite[Definition-Remark 2.2]{KMM92}) has a natural extension to smooth proper pairs.
	
	\begin{definition} \label{def: rationally connected pair}
		A smooth proper pair $(X,M)$ over a field $K$ is \textit{rationally connected} if there exists a nonempty open subvariety $V\subset X$ such that for each algebraically closed field $L/K$ and every two points $p_1, p_2\in V(L)$, there exists a rational curve $C\subset X_L$ containing both points such that $C$ is the image of a morphism $f\in(X,M_{\mon})(\mathbb{P}^1_L)$, where $(X,M_{\mon})$ is the pair from Notation \ref{notation: M_mon}.
	\end{definition}
	
	In other words, a pair $(X,M)$ is rationally connected if for any two general points on $X$ there is a projective rational curve passing through them respecting the conditions imposed by $M_{\mon}$. In particular, if $(X,M)$ is rationally connected, then $X$ is rationally connected as well. The projectivity of the curve is crucial here, as any curve $C$ on $X$ not contained in the divisors $D_1,\dots,D_n$ has a nonempty open subset $C'\subset C$ avoiding these divisors.
	
	One reason to consider such pairs in Conjecture \ref{conj: generalized Manin conjecture} is that they have a good reason for having plenty of $\cM$-points after an extension of the ground field. This is because the images of rational points under $f\in(X,M_{\mon})(\mathbb{P}^1_K)$ are $\cM$-points over $\cO_S$ for some finite set of places $S\subset \Omega_K$, as the next proposition shows.
	\begin{proposition}
		Let $(X,M)$ be a pair over a number field $K$ and let $f\in(X,M_{\mon})(\mathbb{P}^1_K)$. Then for every finite set of places $S\subset \Omega_K$ and every $\cO_S$-integral model $(\cX,\cM)$, there exists a finite set of places $S'\supset S$ such that $f$ extends to $\tilde{f}\in (\cX,\cM_{\mon})(\mathbb{P}^1_{\cO_{S'}})$.
		Furthermore, for any $P\in \mathbb{P}^1_K(K)$, $$f(P)\in(\cX,\cM_{\mon})(\cO_{S'})$$
		if $f(P)$ lies in $U$.
	\end{proposition}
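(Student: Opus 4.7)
The plan is to extend $f$ using the properness of $\cX$ over $\cO_S$, then check the augmented multiplicities of the extension prime-by-prime and enlarge $S$ finitely many times to discard the problematic primes; the last assertion will follow by properness of $\mathbb{P}^1$ combined with the functoriality in Remark \ref{remark: (X,M) as a functor}.

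First I would extend $f$ to an integral morphism. Since $\mathbb{P}^1_{\cO_S}$ is a regular Noetherian $2$-dimensional scheme and $\cX\to\spec\cO_S$ is proper, the valuative criterion of properness extends the rational map $\mathbb{P}^1_{\cO_S}\dashrightarrow\cX$ away from a locus of codimension $\geq 2$, which is a finite set of closed points. Enlarging $S$ to include the primes of $\cO_S$ under these points together with those ramifying in the splitting field $E$ yields a morphism $\tilde{f}\colon\mathbb{P}^1_{\cO_{S_1}}\to\cX$ extending $f$, for some finite set $S_1\supseteq S$. Set $J=\{i\mid f(\mathbb{P}^1_E)\subseteq D_i\}$ and let $\bfm_\infty\in\lineN^n$ have entries $\infty$ on $J$ and $0$ elsewhere. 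The $M_{\mon}$-condition on $f$ applied to the sole irreducible component $\mathbb{P}^1_E$ gives $(\bfm_\infty,c)\in\fM_{\cC,\mon}$ for every stratum $c$ of $\bigcap_{i\in J}D_i$ containing $f(\mathbb{P}^1_E)$. After further enlarging $S_1$ to exclude the finitely many primes over which $\cD_i$ differs from the scheme-theoretic closure of $D_i$, the components of $\bigcap_{i\in J}\cD_i$ correspond bijectively via closure to their generic counterparts and inherit the associated membership in $\fM'_{\cC,\mon}$.

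It then remains to check the $\cM_{\mon}$-condition on $\tilde{f}$ at every prime divisor and irreducible component of $\mathbb{P}^1_{\cO_{\tilde{S}'}}$. For horizontal prime divisors, that is, closures of closed points $q\in\mathbb{P}^1_E$, the augmented multiplicity of $\tilde{f}$ agrees with $\mult^{\cC}_q(f)$ under the identification of integral and generic strata, so the condition is inherited from $f$. The unique irreducible component $\mathbb{P}^1_{\cO_{\tilde{S}'}}$ is treated identically via the irreducible-component condition on $\mathbb{P}^1_E$. For a vertical fiber $V_{\mathfrak{p}}$, we have $\mult_{V_{\mathfrak{p}}}(\tilde{f})_i=\infty$ for $i\in J$ since $\tilde{f}(\mathbb{P}^1_{\cO_{\tilde{S}'}})\subseteq\cD_i$, and $\mult_{V_{\mathfrak{p}}}(\tilde{f})_i=0$ for $i\notin J$ precisely when $V_{\mathfrak{p}}\not\subseteq\tilde{f}^{-1}(\cD_i)$. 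For $i\notin J$ the closed subscheme $\tilde{f}^{-1}(\cD_i)$ has generic fiber $f^{-1}(D_i)\subsetneq\mathbb{P}^1_E$, so it has dimension $\leq 1$; being Noetherian, it has only finitely many $1$-dimensional irreducible components, which bounds the number of primes $\mathfrak{p}$ with $V_{\mathfrak{p}}\subseteq\tilde{f}^{-1}(\cD_i)$. An analogous argument, applied to $\tilde{f}^{-1}(c'')$ for each component $c''$ of $\bigcap_{i\in J}\cD_i$ distinct from the unique component $c'$ containing the closure of $f(\mathbb{P}^1_E)$, bounds the primes where $\tilde{f}(V_{\mathfrak{p}})$ enters a stratum other than the closure of the generic one. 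Adjoining all these finitely many primes to $S_1$ gives the desired $S'$, and with these exclusions $\mult^{\cC}_{V_{\mathfrak{p}}}(\tilde{f})=(\bfm_\infty,\overline{c'})$ lies in $\fM'_{\cC,\mon}$.

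For the second assertion, $f(P)\in U$ forces $J=\varnothing$. The valuative criterion extends $P\in\mathbb{P}^1_K(K)$ uniquely to a section $s\colon\spec\cO_{S'}\to\mathbb{P}^1_{\cO_{S'}}$, and Remark \ref{remark: (X,M) as a functor} applied to the composition yields $\tilde{f}\circ s\in(\cX,\cM_{\mon})(\cO_{S'})$. Concretely, each prime divisor $v$ of $\mathbb{P}^1_{\cO_{\tilde{S}'}}$ contributes finitely to the functoriality sum because $J=\varnothing$ makes $\mult_v(\tilde{f})$ finite, except at the horizontal section $\bar{P}=s(\spec\cO_{S'})$ where the coefficient $a_{\bar{P}}=\infty$ multiplies $\mult_{\bar{P}}(\tilde{f})=\mult_P(f)=0$ and hence contributes $\infty\cdot 0=0$ by the convention from Definition \ref{def: split pair}. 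The main obstacle in this plan is the bookkeeping in the second paragraph, namely the careful identification of all finitely many exceptional primes introduced by vertical components of $\tilde{f}^{-1}(\cD_i)$ and by drops of $\tilde{f}(V_{\mathfrak{p}})$ into strictly smaller strata.
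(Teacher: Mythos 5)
Your argument is correct and follows the same overall route as the paper: extend $f$ to an integral morphism $\tilde f$ (you via the valuative criterion over the arithmetic surface $\mathbb{P}^1_{\cO_S}$, the paper via spreading out), then enlarge $S$ to discard the finitely many primes where a vertical multiplicity or stratum goes wrong, and finally obtain the second assertion from the valuative criterion on $\mathbb{P}^1$ together with the functoriality of Remark \ref{remark: (X,M) as a functor}. The one place you improve on the paper's terse proof is the explicit treatment of the index set $J=\{i\mid f(\mathbb{P}^1_E)\subseteq D_i\}$: the paper simply asserts that the pullbacks $\tilde f^*\cD_i$ are effective divisors and then removes their vertical components, which tacitly presupposes $J=\emptyset$, whereas you correctly observe that when $J\neq\emptyset$ every vertical fiber automatically has $\infty$ in the $J$-coordinates and one must instead check that the limiting stratum containing $\tilde f(V_\fp)$ stays equal to the closure of the generic one for all but finitely many $\fp$. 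Your closing unwinding of the functoriality sum (with the $\infty\cdot 0=0$ convention at the horizontal section $\overline P$) is likewise correct, just a spelling out of what the paper leaves to the cited remark.
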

	\begin{proof}
		By spreading out \cite[Theorem 3.2.1]{Poo17}, we can find a set of places $S'$ containing $S$ such that $f\colon \mathbb{P}^1_K\rightarrow X$ lifts to a morphism $\tilde{f}\colon \mathbb{P}^1_{\cO_{S'}}\rightarrow \cX$. The pullbacks $\tilde{f}^*\cD_1,\dots,\tilde{f}^*\cD_n$ are effective divisors on $\mathbb{P}^1_{\cO_{S'}}$. By further enlarging $S'$ if necessary, we can ensure that these divisors have no components supported above a prime $\fp$ in $\cO_{S'}$, which ensures that $\tilde{f}\in (\cX,\cM_{\mon})(\mathbb{P}^1_{\cO_{S'}})$. Let $P\in \mathbb{P}_K^1(K)$. By the valuative criterion of properness, $P$ corresponds to an unique integral point $\tilde{P}\colon \spec \cO_{S'}\rightarrow \mathbb{P}^1_{\cO_{S'}}$. Now it follows from the functoriality discussed in Remark \ref{remark: (X,M) as a functor} that $\tilde{f}\circ \tilde{P}\in (\cX,\cM_{\mon})(\cO_{S'})$.
	\end{proof}
	
	We finish the section by noticing that toric pairs are rationally connected, giving many examples of rationally connected pairs.
\begin{proposition}
	Let $(X,M)$ be a smooth proper pair over a field $K$ such that $X$ is a toric variety and $D_1,\dots,D_n$ are torus-invariant prime divisors. Then $(X,M)$ is rationally connected.% (\textbf{Refer to this in other preprint.})
\end{proposition}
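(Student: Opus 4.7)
The plan is to take $V = T$ to be the open torus of $X$ and, given two points $p_1, p_2 \in T(L)$ over an algebraically closed field $L/K$, construct the desired morphism $f \in (X, M_{\mon})(\mathbb{P}^1_L)$ by combining an affine‐linear parametrization on a toric chart with the $N$-th power map on the torus, where $N$ is chosen so that all boundary multiplicities automatically lie in the monoid $\fM_{\cC,\mon}$.

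First I would exploit properness of $(X,M)$ to fix positive integers $d_1, \dots, d_n$ with $(d_i\bfe_i, D_i) \in \fM_{\cC}$ and set $N = \lcm(d_1, \dots, d_n)$. Working over $L$ (over which $(X,M)$ splits), I extract $N$-th roots $\tilde p_i \in T(L)$ of $p_i$. Since $T$ sits inside the affine chart $U_\sigma$ for any maximal cone $\sigma$ of the fan, and $U_\sigma \cong \mathbb{A}^d_L$ by smoothness, the affine line in $U_\sigma$ through $\tilde p_1$ and $\tilde p_2$ defines a morphism $\mathbb{A}^1_L \to X_L$ which extends by properness to $\tilde f \colon \mathbb{P}^1_L \to X_L$. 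The generic point of $\mathbb{P}^1_L$ maps into $T$ since no coordinate of the parametrization vanishes identically.

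Next I would introduce the toric morphism $\psi_N \colon X_L \to X_L$ induced by multiplication by $N$ on the cocharacter lattice; this is well defined because $N\sigma \subset \sigma$ for every cone $\sigma$ of the fan. The key toric fact I would invoke is $\psi_N^* D_i = N D_i$ for every torus-invariant prime divisor. Setting $f := \psi_N \circ \tilde f$, one gets $f(\tilde v_i) = \psi_N(\tilde p_i) = p_i$, so $C := f(\mathbb{P}^1_L)$ is a rational curve through both points, and $f^* D_i = N\tilde f^* D_i$ so every intersection multiplicity of $f$ with a $D_i$ is a positive multiple of $N$, hence of $d_i$.

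Finally I would verify the $(X, M_{\mon})$-point condition. At the generic point of $\mathbb{P}^1_L$ the augmented multiplicity is $(\mathbf{0}, X) \in \fM_{\cC}$. At a closed point $v$ with $f(v) \in \partial X_L$, let $I_v = \{i : f(v) \in D_i\}$; by the previous paragraph I can write $\mult_v(f) = \sum_{i \in I_v} k_i d_i \bfe_i$ for positive integers $k_i$, and any $c$ appearing in $\mult_v^{\cC}(f)$ is an irreducible component of $\bigcap_{i \in I_v} D_i$ containing $f(v)$. Repeatedly applying the closure axiom of $\fM_{\cC,\mon}$ to the generators $(d_i\bfe_i, D_i)$ and choosing $c$ at the final addition step shows $(\mult_v(f), c) \in \fM_{\cC,\mon}$, as required. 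The main obstacle is that a single one-parameter subgroup will not in general pass through two prescribed torus points, which is why the power-map trick is essential: working upstairs with $\tilde p_i$ gives complete freedom in choosing the connecting rational curve, while the downstairs composition with $\psi_N$ rigidly scales all boundary multiplicities into the monoid generated by the chosen generators.
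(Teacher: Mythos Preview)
Your proof is correct and takes a genuinely different, more elementary route than the paper's.

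The paper reduces to the split monoid case and then invokes the $M$-approximation theorem \cite[Theorem 1.3]{Moe24}: density of $(X,M)(\mathbb{A}^1_L)$ in the product of local pieces $(X,M)(L\lParen t\rParen)\times (X,M)(L\lParen t-1\rParen)$ yields an $f\in (X,M)(\mathbb{A}^1_L)$ through the given points, and a power map on the \emph{source} $\mathbb{P}^1$ is applied afterwards only to force the multiplicity at $\infty$ into $\fM$. Your argument avoids the approximation machinery entirely: you lift the two torus points along the $N$-th power endomorphism $\psi_N$ of the \emph{target} $X$, connect the lifts by an affine line in a smooth affine chart $U_\sigma\cong\mathbb{A}^d$, and then push down. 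Because $\psi_N^*D_i=ND_i$ for every torus-invariant prime divisor, all boundary multiplicities of the composite are automatically divisible by every $d_i$, which is exactly what $\fM_{\cC,\mon}$ requires. This is more direct and self-contained; the paper's route, on the other hand, advertises the link between $M$-approximation and rational connectedness.

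One small remark on your final verification: in the split toric setting the intersection $\bigcap_{i\in I_v}D_i$ of torus-invariant prime divisors is a torus orbit closure, hence irreducible (or empty). Thus $\cC_{\mult_v(f)}$ is always a singleton, and the phrase ``choosing $c$ at the final addition step'' is vacuous here; the inductive application of the closure rule in $\fM_{\cC,\mon}$ goes through without any choice. You might state this explicitly, since it is precisely what makes the stratum bookkeeping painless in the toric case.
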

\begin{proof}
	% Let $m_1,\dots,m_n$ be positive integers such that $m_1\bfe_1,\dots,m_n\bfe_n\in \fM$. If the toric pair $(X,M')$ with $\fM=m_1\mathbb{N}\times\dots\times m_n\mathbb{N}$ is rationally connected, then $(X,M)$ is rationally connected as well.
	
	%Let $Y\rightarrow X$ be the universal torsor of $X$ as defined in Section \ref{section: Cox coordinates}, and let $f\colon Y\rightarrow X$ be the composite of this morphism with the morphism $Y\rightarrow Y$ given by $(x_1,\dots,x_n)\mapsto (x_1^{m_1},\dots, x_n^{m_n})$. For all $i\in \{1,\dots,n\}$ we have $f^* D_i=m_i\{x_i=0\}$, where the prime divisor $\{x_i=0\}$ is the vanishing locus of the $i$-th coordinate. Consequently we see that $f\in (X,M)(Y)$.
	
	Without loss of generality, we can assume that $\fM$ is a monoid and that $X$ is a split toric variety (so that $(X,M)$ is a split pair). Let $L$ be an algebraically closed field containing $K$ and let $L(t)$ be the field of rational functions over $L$ in the variable $t$. Write $(X_{L(t)},M_{L(t)})$ for the pair over $L(t)$ obtained by base changing $X$ and $D_1,\dots, D_n$ to $L(t)$. This pair has the obvious integral model $(X_{\mathbb{P}^1_L},M_{\mathbb{P}^1_L})$ over $\mathbb{P}^1_L$ given by base changing $(X,M)$ to $\mathbb{P}^1_L$. Now \cite[Theorem 1.3]{Moe24} implies that the pair $(X_{L(t)},M_{L(t)})$ satisfies $M_{L(t)}$-approximation off the place $(1:0)\in \mathbb{P}_L^1$. Consequently, the embedding $$(X_{\mathbb{P}^1_L},M_{\mathbb{P}^1_L})(\mathbb{A}_L^1)=(X,M)(\mathbb{A}_L^1)\rightarrow \prod_{v\in \Omega_{L(t)}\setminus \{(1:0)\}} (X,M)(\cO_{v})$$
	has dense image. In particular $(X,M)(\mathbb{A}_L^1)$ has dense image in $$(X,M)(L\lParen t \rParen)\times (X,M)(L\lParen t-1 \rParen).$$ Since $U(L)\subset (X,M)(L\lParen t \rParen)$ and $U(L)\subset (X,M)(L\lParen t-1 \rParen)$, this implies that any two points $p_1,p_2\in U(L)$ are contained in the image of some $f\in(X,M)(\mathbb{A}_L^1)$. As $\fM$ is a monoid and $(X,M)$ is proper, $\fM$ contains $a\mult_{(1:0)}(f)\in \mathbb{N}^n$ for some positive integer $a$. Let $g\colon \mathbb{P}^1_L\rightarrow \mathbb{P}^1_L$ be the map given by $(x:y)\mapsto (x^a:y^a)$. Then $f\circ g\in (X,M)(\mathbb{P}^1_L)$ and it contains the points $p_1$ and $p_2$ in its image. This implies that $(X,M)$ is rationally connected.
\end{proof}
	
	\section{Quasi-Campana points and the log-canonical divisor}
	\label{section: quasi-Campana pairs}
	In this section we specialize the Conjecture \ref{conj: generalized Manin conjecture} to Campana points, weak Campana points and Darmon points as defined in Definition \ref{def: Campana points and Darmon points}. In particular, we will clarify the relation of Conjecture \ref{conj: generalized Manin conjecture} with the conjecture \cite[Conjecture 1.1]{PSTVA21} on Campana points of bounded height.
	In order to uniformly discuss these different kinds of points, we introduce the notion of a quasi-Campana pair.
	\begin{definition}
		Let $m_1,\dots,m_n\in \mathbb{N}\cup \{\infty\}$.
		A pair $(X,M)$ is \textit{quasi-Campana} for the Campana pair $(X,D_{\mathbf{m}})$, where $D_{\mathbf{m}}=\sum_{i=1}^n \left(1-\frac{1}{m_i}\right)D_i$, if the following conditions are satisfied:
		\begin{enumerate}
			\item for all $i\in \{1,\dots,n\}$ with $m_i=\infty$ we have $w_i=0$ for all $(\mathbf{w},c)\in \fM_{\cC}$,
			\item $(m_i\mathbf{e}_i,D_i)\in \fM_{\cC}$ for all $i\in \{1,\dots,n\}$ with $m_i<\infty$, where $\mathbf{e}_i$ is the $i$-th standard basis vector of $\mathbb{Z}^n$,
			\item for all $(\mathbf{w},c)\in \fM_{\cC}\setminus \{(\mathbf{0},X)\}$, $\sum_{i=1}^n \frac{w_i}{m_i}\geq 1.$
		\end{enumerate}
	\end{definition}
	Examples of quasi-Campana pairs are given by the pairs for Campana points and Darmon points, their geometric counterpoints, as well as weak Campana points. In the theory of Campana points, the log-canonical class $K_X+D_{\bfm}$ plays a crucial role. We give an intrinsic definition of the log-canonical class on a pair $(X,M)$ as the ``best approximation from below'' of the canonical class $K_{(X,M)}$ by a $\mathbb{Q}$-divisor class on $X$.
	\begin{definition}
		Let $(X,M)$ be a smooth proper pair over a field $K$ such that $X$ is rationally connected. A $\mathbb{Q}$-divisor class $D$ on $X$ is called the \textit{log-canonical class} for $(X,M)$ if $K_{(X,M)}-\pr_M^*D\in \Eff^1(X,M)$ and for any $\mathbb{Q}$-divisor class $D'$ satisfying the same property, we have $D-D'\in \Eff^1(X)$. We will write $K_{(X,M),\log}$ for the log-canonical class.
	\end{definition}
	Note that since $\Eff^1(X,M)$ is strictly convex by Proposition \ref{prop: effective cone strictly convex}, any two divisors whose classes are log-canonical are $\mathbb{Q}$-linearly equivalent, so the above definition makes sense.
	
	For the pair corresponding to Campana points, the log-canonical divisor is simply $K_X+D_{\bfm}$.
	\begin{proposition}
		Let $(X,M)$ be a smooth proper quasi-Campana pair for the Campana pair $(X,D_{\mathbf{m}})$. Then $K_{(X,M),\log}=K_X+D_{\mathbf{m}}$ is the log-canonical divisor class for $(X,M)$. Furthermore, if $(X,M)$ is a pair corresponding to Darmon points, then $\pr_M^* K_{(X,M),\log}=K_{(X,M)}$.
	\end{proposition}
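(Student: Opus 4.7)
The plan is to verify directly the two defining properties of the log-canonical class for $K_X + D_{\bfm}$, and then to handle the Darmon case by a separate direct computation.

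For effectivity, I would use the natural representative $\pr^*_M K_X + R$ of $K_{(X,M)}$ and compute $K_{(X,M)} - \pr^*_M(K_X + D_{\bfm}) = R - \pr^*_M D_{\bfm}$ as a $\mathbb{Q}$-divisor on $(X,M)$. The $\Div(U)$-parts cancel, and substituting $\pr^*_M D_i = \sum_{(\bfw,c)\in \Gamma_{M,\cC}} w_i\tilde{D}_{\bfw,c}$ shows that the coefficient of $\tilde{D}_{\bfw,c}$ simplifies to $\bigl(-1 + \sum_i w_i\bigr) - \sum_i(1-1/m_i)w_i = -1 + \sum_i w_i/m_i$, where quasi-Campana condition (1) is used to set $w_i = 0$ whenever $m_i = \infty$. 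Quasi-Campana condition (3) then makes this quantity nonnegative for every generator, proving effectivity.

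For maximality, I would take any $D' \in \Pic(X)_\mathbb{Q}$ with $K_{(X,M)} - \pr^*_M D' \in \Eff^1(X,M)$, fix an effective $\mathbb{Q}$-divisor representative $E$, and a representative $F \in \Div(X)_\mathbb{Q}$ of $K_X - D'$ so that $E = \pr^*_M F + R$ in $\Div(X,M)_\mathbb{Q}$. Decomposing $F = F_0 + \sum_j a_j D_j$ with $F_0 \in \Div(U)_\mathbb{Q}$, the strict-transform part of the equation forces $F_0$ to be effective. For each $j$ with $m_j < \infty$, the generator $(m_j\bfe_j, D_j) \in \Gamma_{M,\cC}$ supplied by condition (2) gives the inequality $\mu((m_j\bfe_j, D_j), F) \geq 1 - m_j$. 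Using Definition \ref{def: valuation (m,c)} this multiplicity decomposes as $a_j m_j + \mu((m_j\bfe_j, D_j), F_0)$; the latter term vanishes because the Zariski closure of $F_0$ contains no component equal to $D_j$, so $a_j \geq -1 + 1/m_j$. Thus $F + D_{\bfm}$ is an effective $\mathbb{Q}$-divisor on $X$ representing the class $(K_X + D_{\bfm}) - D'$, which therefore lies in $\Eff^1(X)$. For the Darmon statement, Example \ref{example: generators Darmon and Campana} identifies $\Gamma_M = \{m_i\bfe_i\}_{i=1}^n$, so $\pr^*_M D_i = m_i \tilde{D}_{m_i\bfe_i, D_i}$ and a direct computation gives $\pr^*_M D_{\bfm} = \sum_i(m_i-1)\tilde{D}_{m_i\bfe_i, D_i} = R$, yielding $\pr^*_M(K_X + D_{\bfm}) = \pr^*_M K_X + R = K_{(X,M)}$.

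The main technical point is the vanishing $\mu((m_j\bfe_j, D_j), F_0) = 0$ in the maximality step, which follows from unwinding Definition \ref{def: valuation (m,c)}: since $F_0$ is supported on $U$, its Zariski closure in $X$ has no prime component equal to $D_j$, so the relevant valuation of a local equation is zero. A minor subtlety to flag is the case $m_j = \infty$: condition (1) forces $\pr^*_M D_j = 0$, so the maximality assertion is then only meaningful modulo the kernel of $\pr^*_M$; this ambiguity does not appear in the Darmon case, where all $m_i$ are finite.
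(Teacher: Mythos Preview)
Your proof is correct and follows essentially the same route as the paper. The paper packages the maximality step slightly differently: rather than reading off the coefficients of $\tilde{D}_{m_j\bfe_j,D_j}$ directly, it observes that $\Gamma_{M_{\mathrm{Darmon}},\cC}=\{(m_j\bfe_j,D_j)\}\subset\Gamma_{M,\cC}$ and uses the restriction map $\Pic(X,M)\to\Pic(X,M_{\mathrm{Darmon}})$, which sends effective classes to effective classes and $K_{(X,M)}$ to $K_{(X,M_{\mathrm{Darmon}})}$, thereby reducing to the Darmon case. But this restriction map \emph{is} precisely ``forget all coefficients except those at the Darmon generators'', so your coefficient inequality $a_j m_j + (m_j-1)\geq 0$ is the unpacked version of the same argument.

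One small remark: your flagged subtlety about $m_j=\infty$ cannot occur. The pair is assumed proper, so by Definition~\ref{def: proper pair} some $(d_j\bfe_j,D_j)$ with $d_j>0$ lies in $\fM_\cC$; quasi-Campana condition~(1) would then force $d_j=0$, a contradiction. Hence all $m_j$ are finite and the issue you raise does not arise.
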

	\begin{proof}
		The definition of the log-canonical class $K_{(X,M)}$ immediately implies that $K_{(X,M)}-\pr_M^*(K_X+D_{\mathbf{m}})$ is an effective $\mathbb{Q}$-divisor class on $(X,M)$. Additionally, if $(X,M_{\text{Darmon}})$ is the pair corresponding to Darmon points on $(X,D_{\mathbf{m}})$ then $\pr^*_{M_{\text{Darmon}}} (K_X+D_{\mathbf{m}})= K_{(X,M_{\text{Darmon}})}$, so $K_X+D_{\mathbf{m}}$ is the log-canonical class for $(X,M_{\text{Darmon}})$. For other quasi-Campana pairs for the Campana pair $(X,D_{\mathbf{m}})$, the inclusion $\Gamma_{M_{\text{Darmon}},\cC}=\{(m_1\bfe_1,D_1),\dots,(m_n\bfe_n,D_n)\}\subset \Gamma_{M,\cC}$ induces a group homomorphism $\Pic(X,M)\rightarrow \Pic(X,M_{\text{Darmon}})$ compatible with the pullback homomorphisms $\pr^*_M$ and $\pr^*_{M_{\text{Darmon}}}$. This map sends effective divisor classes to effective divisor classes and $K_{(X,M)}$ to $K_{(X,M_{\text{Darmon}})}$. Thus for any $\mathbb{Q}$-divisor class $D$ on $X$ such that $K_{(X,M)}-\pr^*_M D$ is effective, $K_{(X,M_{\text{Darmon}})}-\pr^*_{M_{\text{Darmon}}} D$ is effective as well. This implies $D_{\bfm}-D\in \Eff^1(X)$, as we saw that $K_X+D_{\mathbf{m}}$ is the log-canonical class for $(X,M_{\text{Darmon}})$. Therefore $K_X+D_{\mathbf{m}}$ is the log-canonical class for $(X,M)$.
	\end{proof}
	There are various other pairs for which the log-canonical class exists, as the next example shows.
	\begin{example}
		Let $(X,M)$ be a smooth proper pair such that $\Pic(X)_{\mathbb{Q}}\cong \mathbb{Q}$ and such that $\Eff^1(X,M)$ is a rational polyhedral cone. Then there exists a log-canonical divisor on $X$ for $(X,M)$. This is because for any nonzero effective divisor $D$ on $X$ there is a largest $a\in \mathbb{Q}$ that satisfies $K_{(X,M)}-\pr_M^*a[D]\geq 0$, so $K_{(X,M),\log}=a[D]$ is the log-canonical class. Here the assumption on the effective cone ensures that $aD$ is a $\mathbb{Q}$-divisor, rather than just an $\mathbb{R}$-divisor.
	\end{example}
	However, the log-canonical class need not exist if the pair is not quasi-Campana, as the next example shows.
	\begin{example} \label{example: no canonical divisor}
		Let $X=\Bl_{(0:0:1)} \mathbb{P}^2$. Let $D_1=D$ be the strict transform of a line passing through $(0:0:1)\in \mathbb{P}^2$ and let $D_2=E$ be the exceptional divisor. Let $(X,M)$ be the divisorial pair for which the set of multiplicities $\fM\subset \mathbb{N}^2$ is the monoid generated by $(3,0)$, $(0,3)$ and $(1,1)$. Then the effective cone of $X$ is generated by $D$ and $E$, and a $\mathbb{Q}$-divisor $D'=(-2-a)D+(-1-b)E=K_X+(1-a)D+(1-b)E$ satisfies $K_{(X,M)}-\pr^*_M D'\geq 0$ if and only if $3a\geq 1$, $3b\geq 1$ and $a+b\geq 1$. There is no solution $(a,b)$ to this system of inequalities with $a$ and $b$ simultaneously minimal. Therefore there does not exist a log-canonical class on $X$ for $(X,M)$. 
	\end{example}
	
	For determining the Fujita invariant of a divisor class, we can use the log-canonical class rather than the canonical class of the pair.
	\begin{proposition} \label{prop: Fujita invariant quasi-Campana}
		Let $(X, M)$ be a smooth proper pair over a field $K$. Assume that there exists a log-canonical class $K_{(X,M),\log}$ for $(X,M)$. Let $L$ be a big and nef $\mathbb{Q}$-divisor class on $X$. Then
		$$a((X,M),L)=\mathrm{inf}\{t\in \mathbb{R}\mid tL+K_{(X,M),\log}\in \overline{\Eff}^1(X)\}.$$
		%Furthermore, if $a((X,M),L)\pr^*L+K_{(X,M)}$ is a rigid $\mathbb{Q}$-divisor on $(X,M)$, then $a((X,M),L)L+K_{(X,M),\log}$ is a rigid $\mathbb{Q}$-divisor on $X$. (\textbf{Does the converse also hold?})
	\end{proposition}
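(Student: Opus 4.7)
Set $a := a((X,M), L)$ and $a' := \inf\{t\in \mathbb{R} \mid tL + K_{(X,M),\log} \in \overline{\Eff}^1(X)\}$; the task is to prove $a = a'$.

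For the inequality $a \leq a'$, the plan is to pull back and then add the effective residual $F := K_{(X,M)} - \pr^*_M K_{(X,M),\log}$, which is effective by the very definition of the log-canonical class. The pullback $\pr^*_M\colon \Pic(X)_\mathbb{R} \to \Pic(X,M)_\mathbb{R}$ is a continuous linear map sending effective classes to effective classes, so it sends $\overline{\Eff}^1(X)$ into $\overline{\Eff}^1(X,M)$. Hence for any $t \geq a'$, applying this to the pseudo-effective class $tL + K_{(X,M),\log}$ and then adding $F$ yields $t\pr^*_M L + K_{(X,M)} \in \overline{\Eff}^1(X,M)$, so $t \geq a$.

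For the reverse inequality $a' \leq a$, the plan is to show that for every rational $t > a$ and every sufficiently small rational $\varepsilon > 0$, the class $(t+\varepsilon) \pr^*_M L + K_{(X,M)}$ is represented by an effective $\mathbb{Q}$-divisor on $(X,M)$. Once this is in place, applying the maximality property of $K_{(X,M),\log}$ to the $\mathbb{Q}$-divisor class $D' := -(t+\varepsilon)L$ gives $(t+\varepsilon)L + K_{(X,M),\log} \in \Eff^1(X)$, and letting first $\varepsilon \to 0$ and then $t \to a$ yields $aL + K_{(X,M),\log} \in \overline{\Eff}^1(X)$, so $a \geq a'$. The effectivity of $(t+\varepsilon)\pr^*_M L + K_{(X,M)}$ I would obtain by writing this class as $(t\pr^*_M L + K_{(X,M)}) + \varepsilon \pr^*_M L$, where the first summand is pseudo-effective by hypothesis and the second is a positive multiple of $\pr^*_M L$. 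The linchpin is that $\pr^*_M L$ lies in the interior of $\overline{\Eff}^1(X,M)$ (``big on the pair''); granting this, the sum is interior + pseudo-effective, which is again interior by the standard convex-geometry fact that $\mathrm{int}(C) + C \subseteq \mathrm{int}(C)$ for a closed convex cone $C$. Because the interior of a closed convex cone coincides with the interior of the cone itself, any rational class in this interior lies in $\Eff^1(X,M)_\mathbb{Q}$, by a Farkas-type rationality argument on the polyhedron of nonnegative coefficient tuples representing the class as a combination of finitely many rational effective generators.

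To prove the linchpin, I use Kodaira's lemma on $X$ to write $L$ as $\mathbb{Q}$-linearly equivalent to $A + E$ with $A$ ample and $E$ effective on $X$; since $\pr^*_M E \in \Eff^1(X,M)_\mathbb{Q}$, it suffices to show $\pr^*_M A$ lies in the interior of $\overline{\Eff}^1(X,M)$. I check this by verifying that for every prime effective divisor $D_0$ on $(X,M)$ there exists a rational $\delta > 0$ with $\pr^*_M A - \delta D_0 \in \Eff^1(X,M)_\mathbb{Q}$. If $D_0$ is a prime divisor on $U$, openness of ampleness on $X$ gives that $A - \delta \overline{D_0}$ is ample for $\delta \ll 1$; pulling back and absorbing the nonnegative contributions from the correction $\overline{D_0} - D_0 = \sum_i a_i D_i$ into the effective combination on $(X,M)$ closes this case. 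If $D_0 = \tilde{D}_{\bfm, c}$ for some $(\bfm, c) \in \Gamma_{M, \cC}$, then for $m \gg 0$ the very ample linear system $|mA|$ contains an effective $\mathbb{Q}$-divisor $A_0$ that passes through the stratum $c$; by the definitions of $v_{(\bfm, c)}$ and $\mu((\bfm, c), -)$ in terms of local equations of the $D_i$'s, the coefficient $\mu((\bfm, c), A_0)$ of $\tilde{D}_{\bfm, c}$ in $\pr^*_M A_0$ is strictly positive, so $\pr^*_M A - \delta \tilde{D}_{\bfm, c} \in \Eff^1(X,M)_\mathbb{Q}$ for small rational $\delta$. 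The main obstacle is precisely this second case of the linchpin: it requires coupling the classical very-ampleness statement on $X$ with the multiplicity bookkeeping intrinsic to the pair $(X,M)$. Once the linchpin is in hand, the rest of the argument is formal.
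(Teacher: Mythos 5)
Your argument is correct, but it takes a heavier route than the paper. The paper's proof is two sentences: first, since $L$ is big (hence interior in $\overline{\Eff}^1(X)$), the infimum defining the right-hand side can be computed over $\Eff^1(X)$ rather than $\overline{\Eff}^1(X)$; second, for every $\mathbb{Q}$-divisor class $D$ on $X$ one has the equivalence $D + K_{(X,M),\log}\in\Eff^1(X)\Longleftrightarrow \pr^*_M D + K_{(X,M)}\in\Eff^1(X,M)$ --- the forward implication by pulling back and adding the effective residual $K_{(X,M)}-\pr^*_M K_{(X,M),\log}$, the reverse by the maximality property of the log-canonical class applied to $D'=-D$ --- and specializing to $D=tL$ transfers the infimum to the pair in one stroke. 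Your two inequalities are exactly the two directions of this equivalence; the difference is that you argue in $\overline{\Eff}^1(X,M)$ rather than $\Eff^1(X,M)$, which is why you must first pass from pseudo-effectivity to effectivity, and this is what forces your ``linchpin'' that $\pr^*_M L$ lies in the interior of $\overline{\Eff}^1(X,M)$. That linchpin is true and your verification is sound (Kodaira's lemma on $X$, then for each generator $(\bfm,c)\in\Gamma_{M,\cC}$ choosing a member of $|mA|$ whose support contains the stratum $c$, so that $\mu((\bfm,c),-)>0$ on it), and it establishes the independently useful fact that $\pr^*_M$ carries big classes on $X$ to interior points of $\overline{\Eff}^1(X,M)$. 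But it is a substantially stronger input than the proposition needs: the paper extracts the equality directly from the defining properties of $K_{(X,M),\log}$, keeping the entire argument at the level of effective cones and making no appeal to ampleness or very ample linear systems.
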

	\begin{proof}
		Since $L$ is big, and thus an effective $\mathbb{Q}$-divisor class, the infimum of all $t\in \mathbb{R}$ such that $tL+K_{(X,M),\log}$ is effective is the same as the infimum of all $t\in \mathbb{R}$ such that $tL+K_{(X,M),\log}$ is pseudo-effective.
		For any $\mathbb{Q}$-divisor class $D$ on $X$, we have $D+K_{(X,M),\log}\in \Eff^1(X)$ if and only if $\pr_M^* D+K_{(X,M)}\in \Eff^1(X,M)$, by the definition of the log-canonical class applied to $D'=-D$. By taking $D=tL$, we find the desired identity for $a((X,M),L)$.
	\end{proof}
	Similarly, we can use the log-canonical class to determine whether a divisor class is adjoint rigid.
	\begin{proposition} \label{prop: rigid quasi-Campana}
		Let $(X,M)$ be a proper quasi-Campana pair and let $L$ be a big and nef $\mathbb{Q}$-divisor on $X$. Then $a((X,M),L)\pr_M^*L+K_{(X,M)}$ is rigid if and only if $a((X,M),L)L+K_X+D_{\bfm}$ is rigid.
	\end{proposition}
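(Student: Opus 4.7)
The strategy is to pin down the exact difference between the adjoint class of the pair and its log version, and then run the rigidity comparison componentwise. Set $a = a((X,M),L)$, $D_X := aL+K_X+D_{\bfm} \in \Pic(X)_{\mathbb{Q}}$ and $D_{(X,M)} := a\pr_M^* L + K_{(X,M)} \in \Pic(X,M)_{\mathbb{Q}}$. Using $\pr_M^* D_i = \sum_{(\bfw,c)\in \Gamma_{M,\cC}} w_i\tilde{D}_{\bfw,c}$ and Definition \ref{def: Canonical divisor}, a direct computation gives
$$D_{(X,M)} - \pr_M^* D_X = R - \pr_M^* D_{\bfm} = \sum_{(\bfw,c)\in \Gamma_{M,\cC}} \left(\sum_{i=1}^n \tfrac{w_i}{m_i} - 1\right)\tilde{D}_{\bfw,c} =: F,$$
which is an effective divisor on $(X,M)$ by condition (3) of quasi-Campana. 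Moreover, by condition (2) each $(m_i\bfe_i,D_i)$ lies in $\Gamma_{M,\cC}$, and the coefficient of $F$ at $\tilde{D}_{m_i\bfe_i,D_i}$ is exactly $-1 + m_i/m_i = 0$.

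For the implication $D_X$ rigid $\Rightarrow$ $D_{(X,M)}$ rigid, let $D'$ be any effective $\mathbb{Q}$-divisor on $(X,M)$ $\mathbb{Q}$-linearly equivalent to $D_{(X,M)}$. Since principal divisors on $(X,M)$ are pullbacks of principal divisors on $X$, one can write $D' = \pr_M^* \tilde{D}_X + F$ with $\tilde{D}_X \sim_{\mathbb{Q}} D_X$. Decompose $\tilde{D}_X = E + \sum_{i=1}^n b_i D_i$ with $E$ having no components among $D_1,\dots,D_n$. The restriction $D'|_U = E|_U$ is effective, so $E$ is effective on $X$. For each $i$, the coefficient of $\tilde{D}_{m_i\bfe_i,D_i}$ in $D'$ is $b_i m_i + \mu((m_i\bfe_i,D_i),E)$, using the vanishing of $F$ at this generator. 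Since $v_{(\bfe_i,D_i)}$ restricts to $\operatorname{ord}_{D_i}$ on $\cO_{X,D_i}$ and $E$ does not contain $D_i$, its local equation is a unit at the generic point of $D_i$, forcing $\mu((m_i\bfe_i,D_i),E)=0$. Effectiveness of $D'$ then yields $b_i m_i \geq 0$, hence $b_i \geq 0$, so $\tilde{D}_X$ is effective. By rigidity of $D_X$, $\tilde{D}_X = D_X$, and thus $D' = D_{(X,M)}$.

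For the converse, assume $D_{(X,M)}$ is rigid. If $\tilde{D}_X \sim_{\mathbb{Q}} D_X$ is any effective $\mathbb{Q}$-divisor on $X$, then $\pr_M^* \tilde{D}_X$ is effective on $(X,M)$ (all coefficients $\mu((\bfw,c),\tilde{D}_X)$ are nonnegative because $\tilde{D}_X$ is effective), and so $\pr_M^* \tilde{D}_X + F$ is an effective $\mathbb{Q}$-divisor $\mathbb{Q}$-linearly equivalent to $D_{(X,M)}$. Rigidity gives $\pr_M^* \tilde{D}_X = \pr_M^* D_X$, and the injectivity of $\pr_M^* \colon \Div(X)\to \Div(X,M)$ for proper pairs (Proposition \ref{prop: rank pic for proper pair}) yields $\tilde{D}_X = D_X$. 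The cases where the classes fail to be effective are subsumed by the same argument: if $D_X$ is not effective, any effective representative of $D_{(X,M)}$ would produce an effective $\tilde{D}_X \sim_{\mathbb{Q}} D_X$ by the decomposition above, contradicting non-effectiveness; hence neither class is rigid.

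The main subtlety lies in the identity $\mu((m_i\bfe_i,D_i),E)=0$, which is what forces the sign condition $b_i \geq 0$ and so allows effectiveness of $D'$ on $(X,M)$ to be promoted back to effectiveness of $\tilde{D}_X$ on $X$. Unwinding Definition \ref{def: valuation (m,c)} along $D_i$ and using that strict normal crossings forces the local equation of $E$ at the generic point of $D_i$ to be a unit resolves this, and the rest of the argument is a clean bookkeeping between $\Div(X)$ and $\Div(X,M)$ via the effective divisor $F$.
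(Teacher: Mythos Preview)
Your proof is correct and follows essentially the same approach as the paper's: both compute the difference $F = R - \pr_M^* D_{\bfm} = \sum_{(\bfw,c)}\bigl(\sum_i w_i/m_i - 1\bigr)\tilde{D}_{\bfw,c}$, observe it is effective and vanishes at each $\tilde{D}_{m_i\bfe_i,D_i}$, and use that $\mu((m_i\bfe_i,D_i),\cdot)$ is $m_i$ times $\operatorname{ord}_{D_i}$ to transfer effectiveness back and forth. The paper packages the two directions and the non-effective case into the single statement ``$D_{\log}+E$ is effective if and only if $D+\pr_M^* E$ is effective for all $E\in\Div(X)_{\mathbb{Q}}$'', whereas you unwind this explicitly, but the substance is identical.
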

	\begin{proof}
		Let $D'\in \Div(X)$ be a representative of the canonical divisor $K_X$, and set $D=\pr^*_M  D'+\sum_{(\bfm,c)\in \Gamma_{M,\cC}} \left(-1+\sum_{i=1}^n m_i\right) \tilde{D}_{\bfm,c}$ and $D_{\log}=D'+D_{\bfm}$. Then $D$, $D_{\log}$ represent the $\mathbb{Q}$-divisor classes $K_{(X,M)}$ and $K_X+D_{\bfm}$ on $(X,M)$ and $X$, respectively. A direct calculation shows that $D-\pr^*_M D_{\log}$ is an effective divisor which vanishes on $U$ and on the divisors corresponding to the elements $(m_i\mathbf{e}_i,D_i)\in \fM_{\cC}$. Thus if $E\in \Div(X)_{\mathbb{Q}}$ and $\tilde{D}$ is either a prime divisor on $U$ or $\tilde{D}= \tilde{D}_{m_i\bfe_i}$ for $i\in \{1,\dots, n\}$, then the coefficient of $\tilde{D}$ in $\pr_M^*(D_{\log}+E)$ is equal to the coefficient of $\tilde{D}$ in $D+\pr_M^* E$. Since the coefficient of $\tilde{D}_{m_i\bfe_i}$ in $\pr_M^*(D_{\log}+E)$ is $m_i$ times the coefficient of $D_i$ in $D_{\log}+E$ for all $i\in \{1,\dots,n\}$, this implies that $D_{\log}+E$ is effective if and only if $D+\pr_M^*E$ is effective. Therefore, $D_{\log}+E$ is rigid if and only if $D+\pr^*_M E$ is rigid.
	\end{proof}
	However, in general the log-canonical class need not be adjoint rigid when it exists.
	\begin{example}
		Let $X=\mathbb{P}^2_K$ and let $D_1$ and $D_2$ be two distinct lines in $\mathbb{P}_K^2$. As in Example \ref{example: no canonical divisor} we let $\fM\subset \mathbb{N}^2$ be the monoid generated by $(3,0)$, $(0,3)$ and $(1,1)$. The canonical divisor class $K_{(X,M)}$ of the divisorial pair $(X,M)$ is represented by the divisor $$\tilde{D}=-\pr^*_M (-2D_1-D_2+D_1+D_2)-\tilde{D}_{(3,0)}-\tilde{D}_{(0,3)}-\tilde{D}_{(1,1)}= -4\tilde{D}_{(3,0)}-\tilde{D}_{(0,3)}-2\tilde{D}_{(1,1)}.$$
		The Picard group of the pair, $\Pic(X,M)\cong \mathbb{Z}^2\times \mathbb{Z}/3\mathbb{Z}$, is generated by the divisors $\tilde{D}_{(3,0)},\tilde{D}_{(0,3)},\tilde{D}_{(1,1)}$, where $3\tilde{D}_{(3,0)}$ is linearly equivalent to $3\tilde{D}_{(0,3)}$.
		Both $E_1=-\tfrac{4}{3}D_1-\tfrac{2}{3}D_2$ and $E_2=-\tfrac{5}{3}D_1-\tfrac{1}{3}D_2$ represent the log-canonical class $K_{(X,M),\log}$. Since $\pr_M^*E_1+\widetilde{D}$ and $\pr_M^*E_2+\widetilde{D}$ are effective, $E_1$ and $E_2$ are not adjoint rigid for the pair $(X,M)$.
	\end{example}
	%	\begin{example} (This example does not work, we can use other lines than $D_1$ in log-canonical.)
		%		Now let $(X,M)$ be as above, except we let $D_2$ be a smooth conic instead of a line intersecting $D_1$ in two distinct points $c_1,c_2\in \mathbb{P}^2(K)$. The canonical divisor class of $(X,M)$ is represented by the divisor $\widetilde{D}=-\tilde{D}_{(3,0)}-\tilde{D}_{(0,3)}-\tilde{D}_{(1,1),c_1}-\tilde{D}_{(1,1),c_2}$, which is $\mathbb{Q}$-linearly equivalent to $-3\tilde{D}_{(3,0)}-\frac{4}{3} \tilde{D}_{(1,1),c_1}-\frac{4}{3} \tilde{D}_{(1,1),c_2}$, and % -4/3 D_1  works for log-canonical?$-\tfrac{1}{3}D_1-\tfrac{2}{3}D_2$ is the unique effective representative of the log-canonical class since $D_2\sim 2D_1$. Thus the log-canonical class $K_{(X,M),\log}$ is adjoint rigid.
		%	\end{example}
	
	\begin{remark} \label{remark: polyhedron quasi-Campana}
		For a quasi-Campana pair, the polyhedron in Remark \ref{remark: reduction to small generators divisorial} is simply given by
		$$P=\left\{\bfx\in [0,\infty)^{\Gamma_{M,\cC}}\; \middle\vert \; \sum_{i=1}^n \frac{x_i}{m_i}\geq 1\right\}.$$
	\end{remark}
	
	This description of the polyhedron implies that for quasi-Campana pairs the $b$-invariant can be computed using the effective cone of $X$, rather than having to use the full effective cone of $(X,M)$. This is done by replacing the canonical class with the log-canonical class, and by adding a correction factor to the $b$-invariant.
	\begin{proposition} \label{prop: b-invariant quasi-Campana}
		Let $(X,M)$ be a smooth proper quasi-Campana pair, where $X$ is a rationally connected variety. Assume that at least one of the following conditions hold:
		\begin{enumerate}
			\item $\Eff^1(X)=\overline{\Eff}^1(X)$,
			\item $(X,M)$ is a pair corresponding to geometric Darmon points or geometric Campana points.
		\end{enumerate}
		Let $L$ be a big and nef $\mathbb{Q}$-divisor class on $X$, and let $G=\gal(E/K)$, where $E$ is the splitting field of $(X,M)$. Then $$b(K,(X,M),L)=b(K,(X,D_{\bfm}),L)+b'(K,(X,M),L),$$
		where $b(K,(X,D_{\bfm}),L)$ is the codimension of the minimal face $\mathcal{F}$ of $\overline{\Eff}^1(X)$ containing $A=a((X,M),L)L+K_X+D_{\bfm}$, and
		$$b'(K,(X,M),L)=\# I_{M,\cC},$$%\sum_{\mathbf{w}\in I} \#\cC_{\mathbf{w}}.$$
		where $I_{M,\cC}$ is the set of $(\mathbf{w},c)\in \Gamma_{M,\cC}/G$ satisfying $\sum_{i=1}^n \frac{w_i}{m_i}=1$, $\mathbf{w}\neq m_1\bfe_1,\dots, \mathbf{w}\neq m_n\bfe_n$ and such that the coefficient of $\tilde{D}_{(\mathbf{w},c)}$ in $\pr^*_M D$ is zero for all effective divisors $D$ whose class lies in $\mathcal{F}$.
		In particular, if $L= -K_X-D_{\bfm}$, then
		$$b(K,(X,M),L)=\rank \Pic(X)+\#\bigg\{(\mathbf{w},c)\in \Gamma_{M,\cC}/G \bigg\vert \sum_{i=1}^n \frac{w_i}{m_i}=1, \mathbf{w}\neq m_1\bfe_1,\dots, \mathbf{w}\neq m_n\bfe_n\bigg\}.$$
		% Earlier assumed $\overline{\Eff}^1(X)=\Eff^1(X)$.
		%    If $\Eff^1(X)$ is generated by the divisors $D_1,\dots, D_n$ and $\Eff^1(X,M)$ is generated by $\{\tilde{D}_{\mathbf{w},c}\mid (\mathbf{w},c)\in \Gamma_{M,\cC}\}$, then the condition on $\mu$ is satisfied if and only if $w_i=0$ for all $i=1,\dots, n$ with $D_i\in \mathcal{F}$.
		%the cardinality of the set
		%\begin{align*}
		%\left\{\mathbf{w}\in \fM_{\red}\middle\vert \sum_{i=1}^n\frac{w_i}{m_i}=1, w_i=0 \text{ for all } i \text{ with } D_i\in \mathbcal{F}, \mathbf{w}\neq m_j\mathbf{e}_j \text{ for all } j\right\}.
		%\end{align*}
	\end{proposition}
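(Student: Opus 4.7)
The plan is to directly identify the minimal face $\mathcal{F}'$ of $\overline{\Eff}^1(X,M)$ containing the adjoint divisor class, then compute its codimension. First I would apply Lemma \ref{lemma: reduction to small generators}, which together with Remark \ref{remark: polyhedron quasi-Campana} lets us replace $(X,M)$ by the sub-pair $(X,M')$ whose generators lie on the hyperplane $\sum_i w_i/m_i = 1$ without affecting $a$ or $b$. A direct expansion using $K_{(X,M')} = \pr^*_M K_X + R$ together with the formula for $\pr^*_M D_{\bfm}$ gives
\[
a\,\pr^*_M L + K_{(X,M')} = \pr^*_M A + \sum_{(\bfw,c)\in \Gamma_{M',\cC}}\Big(\sum_i \tfrac{w_i}{m_i}-1\Big)\tilde{D}_{\bfw,c} = \pr^*_M A,
\]
where $A = aL + K_X + D_{\bfm}$, since every surviving generator satisfies $\sum_i w_i/m_i = 1$. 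By Proposition \ref{prop: Fujita invariant quasi-Campana}, $a = a((X,D_{\bfm}),L)$, and $A$ lies in the relative interior of the minimal face $\mathcal{F}$ of $\overline{\Eff}^1(X)$ of codimension $b(K,(X,D_{\bfm}),L)$.

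Next, using Proposition \ref{prop: rank pic for proper pair} together with the identity $m_i\,\tilde{D}_{m_i\bfe_i,D_i} = \pr^*_M D_i - \sum_{(\bfw,c)\in \Gamma_{\mathrm{ex}}} w_i\,\tilde{D}_{\bfw,c}$ (to eliminate the Darmon classes), I would split
\[
\Pic(X,M')_{\mathbb{Q}} = \pr^*_M \Pic(X)_{\mathbb{Q}} \oplus \bigoplus_{(\bfw,c)\in \Gamma_{\mathrm{ex}}}\mathbb{Q}[\tilde{D}_{\bfw,c}],
\]
where $\Gamma_{\mathrm{ex}} = (\Gamma_{M',\cC}\setminus \{(m_i\bfe_i,D_i)\})/G$. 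Writing $V$ for the linear span of $\mathcal{F}'$, the target claim is $V = \pr^*_M(\mathrm{span}\,\mathcal{F}) \oplus \bigoplus_{(\bfw,c)\in \Gamma_{\mathrm{ex}}\setminus I_{M,\cC}}\mathbb{Q}[\tilde{D}_{\bfw,c}]$. For the pullback directions, the restriction $\Pic(X,M')_{\mathbb{Q}}\twoheadrightarrow \Pic(X,M_{\mathrm{Darmon}})_{\mathbb{Q}}\cong \Pic(X)_{\mathbb{Q}}$ (via Example \ref{example: Picard group Darmon points}) preserves effectivity and retracts $\pr^*_M$, so $\pr^*_M\beta\in V \iff \beta\in \mathrm{span}(\mathcal{F})$. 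For the easy half on the extras, if $(\bfw,c)\notin I_{M,\cC}$ then there is an effective $D$ on $X$ with $[D]\in\mathcal{F}$ and $\mu((\bfw,c),D)>0$; decomposing $A = (A-\epsilon[D])+\epsilon[D]$ with $A-\epsilon[D]$ still effective in $\mathcal{F}$ (using assumption (1) or (2) to supply an effective representative) and then reducing the $\tilde{D}_{\bfw,c}$-coefficient of the effective divisor $\epsilon\pr^*_M D$ by a small amount produces $[\tilde{D}_{\bfw,c}]\in V$.

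The main obstacle is the converse: for $(\bfw,c)\in I_{M,\cC}$, to show $[\tilde{D}_{\bfw,c}]\notin V$. Any effective representative on $(X,M')$ of a class of the form $\pr^*_M A -\delta[\tilde{D}_{\bfw,c}]+\eta\pr^*_M[H]$ (for ample $H$ and small $\delta,\eta>0$) decomposes, under the splitting above, into an effective representative $F$ of $A+\eta[H]$ on $X$ with $\mu((\bfw,c),F)\geq \delta$. Under assumption (1), the rational polyhedrality of $\Eff^1(X)$ (from the standing hypothesis of Theorem \ref{theorem: invariants quasi-Campana}) lets us write $F$ as a non-negative combination of finitely many extremal prime divisors: components with class in $\mathcal{F}$ contribute zero to $\mu$ by the defining condition of $I_{M,\cC}$, while components outside $\mathcal{F}$ must have coefficients tending to $0$ as $\eta\to 0$ (since $[F]\to A\in\mathrm{relint}(\mathcal{F})$, which is a face), forcing $\mu((\bfw,c),F) = O(\eta)$ and contradicting $\delta>0$ fixed. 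Under assumption (2), $\Gamma_{\mathrm{ex}}=\emptyset$ after the reduction (as $k\bfe_i\in \Gamma_{M'}$ forces $k=m_i$ in the geometric Darmon/Campana cases), so the converse is vacuous. Combining all contributions, $\dim V = \dim \mathcal{F} + \#\Gamma_{\mathrm{ex}} - \#I_{M,\cC}$, whence $\mathrm{codim}\,\mathcal{F}' = b(K,(X,D_{\bfm}),L)+\#I_{M,\cC}$, as claimed; the specialization to $L=-K_X-D_{\bfm}$ drops out since then $A=0$, $\mathcal{F}=\{0\}$ and the $I_{M,\cC}$-condition is automatic.
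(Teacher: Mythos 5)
Your proposal follows the same overall route as the paper through the first several steps: reducing to generators on the polytope facet $\sum_i w_i/m_i = 1$, identifying $a\pr^*_M L + K_{(X,M')} = \pr^*_M A$, splitting $\Pic(X,M')_{\mathbb{Q}}$ into a pullback part and extra directions, and showing for $(\bfw,c)\notin I_{M,\cC}$ that $[\tilde{D}_{\bfw,c}]$ lies in the minimal face $\mathcal{F}'$. Those parts are essentially correct and parallel the paper's argument.

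The converse step -- showing $[\tilde{D}_{\bfw,c}]\notin\mathcal{F}'$ for $(\bfw,c)\in I_{M,\cC}$ -- is where you diverge, and it has real gaps. First, you invoke rational polyhedrality of $\Eff^1(X)$, which is the hypothesis of Theorem~\ref{theorem: invariants quasi-Campana}, not of Proposition~\ref{prop: b-invariant quasi-Campana}; the latter assumes only $\Eff^1(X)=\overline{\Eff}^1(X)$. Second, and more seriously, the $O(\eta)$ bound conflates divisor-level and class-level data. An effective representative $F$ of $A+\eta[H]$ decomposes as $F=\sum_D a_D D$ over its own prime components, which vary with $\eta$; the fact that $[F]\to A\in\operatorname{relint}\mathcal{F}$ bounds $\sum_D a_D\phi_k([D])$ by $O(\eta)$ for supporting functionals $\phi_k$ of $\mathcal{F}$, but does not bound $\sum_D a_D\,\mu((\bfw,c),D)$, since $\mu((\bfw,c),D)$ depends on the actual divisor $D$ (its order of vanishing along $c$) and not just on $[D]$, and has no uniform comparison to $\phi_k([D])$ as the components change. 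Third, the passage from "$\pr^*_M A-\delta[\tilde{D}_{\bfw,c}]$ pseudo-effective" to an \emph{effective} representative by adding $\eta\pr^*_M[H]$ is unjustified: $\pr^*_M[H]$ lies in the proper subspace $\pr^*_M\Pic(X)_{\mathbb{R}}$ and is not in the interior of $\Eff^1(X,M')$ when $\Gamma_{\mathrm{ex}}\neq\emptyset$, so adding it does not push a boundary class into the effective cone.

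The paper's argument for this step is shorter and avoids all of this. Every effective representative of the adjoint class $\pr^*_M A$ is of the form $\pr^*_M A'$ with $A'$ an effective $\mathbb{Q}$-divisor on $X$ representing $A$; since $[A']=A\in\mathcal{F}$, the defining property of $I_{M,\cC}$ gives $\mu((\bfw,c),A')=0$ directly. Hence the $\tilde{D}_{\bfw,c}$-coefficient is zero in \emph{every} effective representative of $\pr^*_M A$, so no class of the form $\pr^*_M A - \epsilon[\tilde{D}_{\bfw,c}]$ with $\epsilon>0$ is effective and $[\tilde{D}_{\bfw,c}]\notin\tilde{\mathcal{F}}_E$. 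No auxiliary ample perturbation, polyhedral decomposition, or $O(\eta)$ estimate is needed. You should replace the limiting argument with this direct computation of $\mu$ along the face $\mathcal{F}$.
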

	
	\begin{remark} \label{remark: description I_C}
		Note that if $\mathcal{F}\subset \overline{\Eff}^1(X)$ is contained in the cone generated by the classes of the divisors $D_1,\dots, D_n$, then an element $(\mathbf{w},c)\in \Gamma_{M,\cC}$ lies in $I_{M,\cC}$ if and only if 
		\begin{itemize}
			\item $\sum_{i=1}^n \frac{w_i}{m_i}=1$,
			\item $\mathbf{w}\neq m_1\bfe_1,\dots, \mathbf{w}\neq m_n\bfe_n$, and
			\item $w_i=0$ for all $i=1,\dots, n$ with $D_i\in \mathcal{F}$.
		\end{itemize}
		In particular, this is satisfied when $X$ is a toric variety and the divisors $D_1,\dots,D_n$ include the torus-invariant divisors on $X$.
	\end{remark}
	
	\begin{remark}
		The assumption on the effective cone in the Proposition is satisfied for many varieties $X$, including Mori dream spaces such as Fano varieties \cite[Corollary 1.3.2]{BCHM10}.
	\end{remark}
	
	\begin{proof}
		Let $E$ be the splitting field of $(X,M)$.
		By Lemma \ref{lemma: reduction to small generators} and the description of the polyhedron given in Remark \ref{remark: polyhedron quasi-Campana}, we can without loss of generality assume that $\sum_{i=1}^n \frac{w_i}{m_i}=1$ for all $(\mathbf{w},c)\in\Gamma_{M,\cC}$. As a consequence of this assumption, we have $\pr^*_M (K_X+D_{\bfm})=K_{(X,M)}$, as $\pr^*_M D_{\bfm}-\sum_{(\mathbf{w},c)\in \Gamma_{M,\cC}} \tilde{D}_{\mathbf{w},c}=0$.
		
		We denote by $\tilde{\mathcal{F}}_E$ the minimal face of $\overline{\Eff}^1(X_E,M)$ containing $a((X,M),L)\pr^*_M L+K_{(X,M)}$ and we let $\tilde{\mathcal{F}}$ be its restriction to $\overline{\Eff}^1(X,M)$. Similarly, we write $\mathcal{F}_E$ for the minimal face of $\overline{\Eff}^1(X_E)$ containing $A$.
		The group homomorphism $\pr^*_M\colon \Pic(X)\rightarrow \Pic(X,M)$ induces a map $\overline{\Eff}^1(X)\rightarrow \overline{\Eff}^1(X,M)$, which we will also denote by $\pr^*_M$. As $(X,M)$ is proper, this map is injective by Proposition \ref{prop: rank pic for proper pair}. This fact combined with the equality $\pr^*_M (K_X+D_{\bfm})=K_{(X,M)}$ implies that an $D\in\Eff^1(X)$ lies in $\mathcal{F}$ if and only if $\pr^*_M D\in \Eff^1(X,M)$. 
		For any $(\mathbf{w},c)\in \Gamma_{M,\cC}$ such that $\mu((\mathbf{w},c),D)>0$ for some effective divisor $D\in \Div(X_E,M)$ whose class is contained in $\mathcal{F}_E$, the divisor class $[\tilde{D}_{\mathbf{w},c}]\in \Pic(X_E,M)_{\mathbb{Q}}$ lies in $\tilde{\mathcal{F}}_E$, as $\pr^*_M [D_i]\in \pr^*_M \mathcal{F}_E\subset \tilde{\mathcal{F}}_E$.
		
		%For any $(\mathbf{w},c)\in \Gamma_{M,\cC}$ with $\sum_{i=1}^n \frac{w_i}{m_i}>1$, Lemma \ref{lemma: reduction to small generators} shows that the divisor $\tilde{D}_{(\mathbf{w},c)}$ lies in $\tilde{\mathcal{F}}$.
		
		%Let $V$ be the quotient of $\Pic(X,M)_{\mathbb{R}}$ by the vector space generated by the face $\tilde{\mathcal{F}}$. Then $b(K,(X,M),L)=\dim V$. Using this fact, we will prove the identity
		%$$b(K,(X,M),L)=b(K,(X,D_{\bfm}),L)-n'+\# I_{\cC},$$
		%by a similar argument as given in Proposition \ref{prop: rank pic for proper pair}.
		
		We write $\langle \mathcal{F} \rangle$ and $\langle \tilde{\mathcal{F}}\rangle$ for the vector spaces generated by the corresponding cones, and consider the linear map $$f\colon \Pic(X)_{\mathbb{R}}/\langle \mathcal{F} \rangle \rightarrow \Pic(X,M)_{\mathbb{R}}/\langle \tilde{\mathcal{F}}\rangle,$$
		induced by $\pr^*_M$. Since the inverse image of $\tilde{\mathcal{F}}$ under $\pr^*_M$ is $\mathcal{F}$, $f$ is an injective map. Since $$\dim \Pic(X)_{\mathbb{R}}/\langle \mathcal{F} \rangle=b(K,(X,D_{\bfm}),L)$$ and
		$$\dim \Pic(X,M)_{\mathbb{R}}/\langle \tilde{\mathcal{F}}\rangle=b(K,(X,M),L),$$
		we have
		$$b(K,(X,M),L)=b(K,(X,D_{\bfm}),L)+\dim \coker(f).$$
		If the only solutions $(\mathbf{w},c)\in \Gamma_{M,\cC}$ to $\sum_{i=1}^n \frac{w_i}{m_i}=1$ are of the form $\mathbf{w}=m_i\bfe_i$, then $\pr^*_M$ gives an isomorphism $\Pic(X)_{\mathbb{R}}\rightarrow \Pic(X,M)_{\mathbb{R}}$, so $f$ is an isomorphism, giving $$b(K,(X,M),L)=b(K,(X,D_{\bfm}),L).$$ In particular, this proves the lemma if the pair corresponds to geometric Campana points or geometric Darmon points.
		
		Now we assume that $\Eff^1(X)=\overline{\Eff}^1(X)$, and we will show that the dimension of the cokernel of $f$ is $\# I_{M,\cC}$.
		
		For every $i\in \{1,\dots,n\}$, we have $\pr^*_M D_i=\sum_{(\mathbf{w},c)\in \Gamma_{M,\cC}} w_i \tilde{D}_{\mathbf{w},c}$, which implies $$m_i \tilde{D}_{m\bfe_i}=-\sum_{\substack{(\mathbf{w},c)\in \Gamma_{M,\cC}\\ \mathbf{w}\neq m\bfe_i}} w_i \tilde{D}_{\mathbf{w},c}$$
		in $\coker(f)$. Thus, the cokernel has $\left\{\left[\sum_{\sigma\in G}\tilde{D}_{\sigma(\mathbf{w},c)}\right]\mid (\mathbf{w},c)\in I_{M,\cC}\setminus\{m_1\bfe_1,\dots, m_n\bfe_n\}\right\}$ as a generating set as a vector space. We will now show that this set is a basis.
		
		First we will show that none of these generators lie in $\tilde{\mathcal{F}}_E$.
		Consider $(\mathbf{w},c)\in \Gamma_{M,\cC}$ satisfying $\mu((\mathbf{w},c),D')=0$ for all effective divisors $D'$ whose class is contained in $\mathcal{F}$. If $D, D_X\in \Div(X)_{\mathbb{Q}}$ are representatives of $L$ and $K_X$ such that $a((X,M),L)D+D_X+D_{\bfm}$ is effective, then the coefficient of $\tilde{D}_{(\mathbf{w},c)}$ in $\pr^*_M (a((X,M),L)D+D_X+D_{\bfm})\in \Div(X,M)_{\mathbb{R}}$ is $$\mu((\mathbf{w},c), a((X,M),L)D+D_X+D_{\bfm})=0.$$
		As $\pr^*_M (K_X+D_{\bfm})=K_{(X,M)}$, this implies that the coefficient of $\tilde{D}_{\mathbf{w},c}$ is zero in every effective representative of $a((X,M),L)\pr^*_M L+K_{(X,M)}$, and we see in particular that $[\tilde{D}_{\mathbf{w},c}]$ does not lie in $\tilde{\mathcal{F}}_E$.
		
		Suppose that $$D'=\sum_{(\mathbf{w},c)\in I_{M,\cC}\setminus\{m_1\bfe_1,\dots, m_n\bfe_n\}}a_{\mathbf{w},c}\tilde{D}_{\mathbf{w},c}$$
		is a $\mathbb{Q}$-divisor on $(X,M)$ which can be written as
		$\pr^*_M E'+E_1-E_2$ for a $\mathbb{Q}$-divisor $E'$ on $X$ and two effective $\mathbb{Q}$-divisors $E_1,E_2$ on $(X,M)$ such that $[E_1],[E_2]\in \tilde{\mathcal{F}}$. By modifying $E'$ if necessary, we can assume that the restriction of $E_1,E_2$ to $\Div(U)_{\mathbb{Q}}\times \oplus_{i=1}^n \mathbb{Q}(\tilde{D}_{m_i\bfe_i,D_i})$ is trivial. Since the restriction of $E$ to $\Div(U)_{\mathbb{Q}}\times \oplus_{i=1}^n \mathbb{Q}(\tilde{D}_{m_i\bfe_i,D_i})$ is trivial as well, we must have $E'=0$. For any $(\mathbf{w},c)\in I_{M,\cC}$ and any effective $\mathbb{Q}$-divisor $D''$ on $(X,M)$ whose class lies in $\tilde{\mathcal{F}}$, the coefficient of $\tilde{D}_{\mathbf{w},c}$ in $D'$ is zero, as we have shown earlier in the proof. This implies $E_1,E_2=0$ so $E=0$. Thus $I_{M,\cC}$ is a basis of $\coker(f)$, so we have shown
		$$b(K,(X,M),L)=b(K,(X,D_{\bfm}),L)+\# I_{M,\cC},$$
		as desired.
	\end{proof}

	For Darmon points, Campana points and weak Campana points on a Campana pair $(X,D_{\bfm})$, the $b$-invariant will generally be larger than $b(K,(X,D_{\bfm}),L)$. As usual, we write $D_{\bfm,E}=\sum_{i=1}^n \left(1-\frac{1}{m_i}\right)D_i$ for smooth prime divisors $D_1,\dots,D_n$. Furthermore, we write $D_{\bfm}=\sum_{i=1}^{n'} \left(1-\frac{1}{m'_i}\right)D'_i$ for prime divisors
	$D'_1,\dots, D'_{n'}$ on $X$ and positive integers $m'_1,\dots, m'_{n'}$. For $i\in \{1,\dots,n'\}$, let $D'_{i,E}=\sum_{\alpha\in \cA_i} D'_{i,\alpha}$ be the decomposition of $D'_i$ into geometrically irreducible components over the splitting field $E$.
	\begin{itemize}
		\item For weak Campana points, $I_{M,\cC}$ is the set of all pairs $(\bfw,c)\in \mathbb{N}^n_{\cC}/G$ such that $\sum_{i=1}^n \frac{w_i}{m_i}=1$, $\bfw\neq m_1\bfe_1,\dots, m_n\bfe_n$, and $w_i=0$ for every divisor $D_i$ appearing in the support of the log-adjoint divisor $A$.
		\item For Campana points and Darmon points, the set $I_{M,\cC}$ is in bijection with $$\bigsqcup_{\substack{i=1 \\ [D_i]\not\in \mathcal{F}}}^{n'} I^i_{M,\cC}/G,$$
		where $I^i_{M,\cC}$ is the set of all $(\bfw,c)$, where $\bfw\in \mathbb{N}^{\cA_i}$ such that $\sum_{\alpha\in \cA_i} \bfw_{\alpha}=m'_i$ and $\bfw\neq m'_i\bfe_{\alpha}$ for any $\alpha\in \cA_i$ and $c$ is a component of $\cap_{\substack{\alpha\in \cA_i \\ w_{\alpha}>0}} D'_{i,\alpha}$. In particular, $I_{M,\cC}\neq \emptyset$ unless every divisor $D_i'$ is smooth or satisfies $[D_i']\in \mathcal{F}$.
	\end{itemize}
%	From Theorem \ref{theorem: b-invariant Campana}, we see that the $b$-invariant in Conjecture \ref{conj: generalized Manin conjecture} agrees with the $b$-invariant in the conjecture \cite[Conjecture 1.1]{PSTVA21} on Campana points of bounded height.
%	Nevertheless, it is not directly clear whether the former conjecture implies the latter, as the conjectures impose different geometric conditions on the Campana pair. In the conjecture on Campana points, the assumption is made that the Campana pair $(X,D_{\mathbf{m}})$ is log Fano, meaning that $-(K_X+D_{\bfm})$ is ample, while the condition imposed in Conjecture \ref{conj: generalized Manin conjecture} is that the pair $(X,M)$ is rationally connected. The following conjecture relates the two conditions.
Finally we predict that a quasi-Campana pair $(X,M)$ corresponding to a log Fano Campana pair $(X,D_{\bfm})$ is rationally connected.
	\begin{conjecture} \label{conjecture: Fano rationally connected}
		Let $(X,D_{\mathbf{m}})$ be a Campana pair for which the log-anticanonical divisor is ample. Then any proper quasi-Campana pair $(X,M)$ for $(X,D_{\mathbf{m}})$ is rationally connected.% \textbf{(In the non-proper setting need to assume that all global sections are constant.)}
	\end{conjecture}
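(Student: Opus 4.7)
The plan is to reduce Conjecture \ref{conjecture: Fano rationally connected} to the case of Darmon points, which will then be essentially a reformulation of Campana's conjecture \cite[Conjecture 9.10]{Cam11} on the rational connectedness of Fano orbifold pairs.

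The first step is a monotonicity observation: if $(X,M_1)\subset (X,M_2)$ are proper smooth pairs and $(X,M_1)$ is rationally connected, then so is $(X,M_2)$. Indeed, the inclusion of parameter sets induces an inclusion $(X,M_{1,\mon})(\mathbb{P}^1_L)\subset (X,M_{2,\mon})(\mathbb{P}^1_L)$ for every algebraically closed $L/K$, so every rational curve witnessing the rational connectedness of $(X,M_1)$ also witnesses it for $(X,M_2)$. Consequently, among all proper quasi-Campana pairs for $(X,D_{\bfm})$ it suffices to prove rational connectedness of the smallest one.

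The second step is to identify this minimal pair. The conditions defining a quasi-Campana pair force $(m_i\mathbf{e}_i,D_i)\in \fM_{\cC}$ whenever $m_i<\infty$ and $w_i=0$ whenever $m_i=\infty$, so the monoid closure $\fM_{\mon}$ of any quasi-Campana $\fM$ contains the submonoid of $\mathbb{N}^n$ generated by the $m_i\mathbf{e}_i$. This submonoid equals $m_1\mathbb{N}\times\cdots\times m_n\mathbb{N}$, which is exactly the Darmon multiplicity set from Definition \ref{def: Campana points and Darmon points}. Thus the minimal proper quasi-Campana pair for $(X,D_{\bfm})$ is the Darmon pair $(X,M_{\mathrm{Darmon}})$.

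The third step is to translate rational connectedness of $(X,M_{\mathrm{Darmon}})$ into rational connectedness of the root stack $(X,\sqrt[\bfm]{D})$: by Remark \ref{remark: Darmon points and root stacks} the $M_{\mathrm{Darmon}}$-points on $\mathbb{P}^1_L$ are precisely the representable morphisms $\mathbb{P}^1_L\to (X,\sqrt[\bfm]{D})$, and the condition that $-K_X-D_{\bfm}$ is ample says precisely that the root stack is Fano (equivalently, that the Campana pair $(X,D_{\bfm})$ is a klt log Fano pair). The conclusion then follows from Campana's conjecture for smooth klt Fano orbifold pairs, stated in \cite[Conjecture 9.10]{Cam11}: such pairs should be rationally connected in the sense that two general points are joined by an orbifold rational curve, which is exactly a morphism from $\mathbb{P}^1$ to the corresponding root stack.

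The main obstacle is of course Campana's conjecture itself, which is still open in general and is only known in restricted cases (such as toric Fano orbifolds, or when the support of $D_{\bfm}$ is empty, where it reduces to the classical theorem of Campana--Koll\'ar--Miyaoka--Mori on smooth Fanos). An unconditional proof of Conjecture \ref{conjecture: Fano rationally connected} would likely require producing rational curves with prescribed ramification over the boundary via deformation-theoretic or MMP techniques adapted to the orbifold setting, and this appears to be out of reach at present; thus the strongest statement one can reasonably hope for here is a conditional reduction to \cite[Conjecture 9.10]{Cam11}.
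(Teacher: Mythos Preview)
The statement in question is a \emph{conjecture} in the paper, not a theorem; the paper offers no proof. What the paper does provide is a single remark following the conjecture: that Campana's conjecture \cite[Conjecture 9.10]{Cam11} on Campana rational connectedness implies Conjecture \ref{conjecture: Fano rationally connected} for the pair $(X,M)$ corresponding to \emph{Campana points} on $(X,D_{\bfm})$. You correctly recognise at the end that an unconditional proof is out of reach and that the best one can do is a conditional reduction.

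Your monotonicity reduction (Steps 1--2) is correct and is a genuine sharpening of the paper's remark. Since rational connectedness is defined via $(X,M_{\mon})(\mathbb{P}^1_L)$, and every proper quasi-Campana $\fM_{\mon}$ contains the Darmon monoid $m_1\mathbb{N}\times\cdots\times m_n\mathbb{N}$, rational connectedness of the Darmon pair would indeed imply it for every proper quasi-Campana pair. The paper does not make this observation.

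The gap is in Step 3. The paper reads \cite[Conjecture 9.10]{Cam11} as asserting \emph{Campana} rational connectedness, i.e.\ rational curves whose local multiplicities with $D_i$ are $0$ or $\geq m_i$, not the stricter divisibility condition needed for Darmon points / morphisms to the root stack. Under that reading, Campana's conjecture gives rational connectedness of $(X,M_{\text{Campana}})$, and by your monotonicity this propagates upward to weak Campana and larger pairs, but \emph{not} downward to the Darmon pair. So your reduction to the Darmon case, while valid, reduces to a statement \emph{stronger} than what \cite[Conjecture 9.10]{Cam11} is usually taken to assert; the identification of Campana's orbifold rational curves with representable maps to the root stack is precisely the point in dispute. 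The paper's more modest claim (Campana's conjecture $\Rightarrow$ the Campana-points case only) reflects this.
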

	This conjecture is a related to a conjecture by Campana \cite[Conjecture 9.10]{Cam11} on Campana rational connectedness for Campana pairs (see also \cite[Conjecture 1.4]{CLT24}), which implies Conjecture \ref{conjecture: Fano rationally connected} for any pair $(X,M)$ corresponding to the Campana points on $(X,D_{\mathbf{m}})$.% Add examples where conjecture is true, as they did in example 1.5, and Campanas examples. Mention Beukers results show it is true for curves.

\section{The fundamental group and the Brauer group of a pair} \label{section: fundamental group and Brauer group}
In this section we introduce the fundamental group and the Brauer group of smooth pairs over a field of characteristic $0$. We use this Brauer group in our formulation of the leading constant in Conjecture \ref{conj: generalized Manin conjecture}.

\subsection{The fundamental group}
We will introduce the fundamental group of a smooth pair using the valuations $v_{(\bfm,c)}$ introduced in Definition \ref{def: valuation (m,c)}. For a smooth variety $X$ over a field $K$, the étale fundamental group $\pi_1(X)$ admits a description using the valuations of the function field. Indeed, if $f\colon Y\rightarrow X$ is a finite morphism of smooth varieties, then purity of the ramification locus \cite[Tag 0BMB]{Stacks} implies that $f$ is étale if and only if it is unramified at every prime divisor $D$ of $X$.
\begin{definition}
	Let $L/K$ be a field extension and let $v$ be a discrete valuation on $K$ with uniformizer $\pi$. In the integral closure $A$ of $\cO_v$ in $L$, the ideal $(\pi)$ factors as $$(\pi)=\prod_{i=1}^k \mathfrak{p}_i^{e_i}$$ for maximal ideals $\mathfrak{p}_i$ of $A$ and positive integers $e_i$. We call the integers $e_1,\dots,e_k$ the \textit{ramification indices of $L/K$ with respect to the valuation $v$}. If $e_1=\dots=e_k=1$, then we say that $L/K$ is \textit{unramified} with respect to $v$.
\end{definition}

Using this definition, the fundamental group $\pi_1(X)$ is the inverse limit $\varprojlim_L\gal(L/K(X))$, where $L$ runs over all extensions of $K(X)$ which are unramified with respect to (the valuations corresponding to) the prime divisors $D$ on $X$.

Inspired by this description, we introduce the fundamental group of a smooth pair using Galois extensions with restricted ramification.
\begin{definition}
	For a smooth pair $(X,M)$ over a field $K$, we say that a separable field extension $L/K(X)$ is \textit{unramified} with respect to $(X,M)$ if
	\begin{enumerate}
		\item it is unramified with respect to all valuations on $K(X)$ corresponding to prime divisors on $U$, and
		\item for every $(\bfm,c)\in \Gamma_{M,\cC}$, the ramification indices with respect to the valuation $v_{(\bfm/\gcd(\bfm),c)}$ divide $\gcd(\bfm)$.
	\end{enumerate}
\end{definition}
We define the fundamental group using these unramified extensions.
\begin{definition} \label{def: fundamental group pair}
	Let $(X,M)$ be a smooth pair over a field $K$ of characteristic $0$. The \textit{(étale) fundamental group} of $(X,M)$ is
	$$\pi_1(X,M)=\varprojlim_L \gal(L/K(X)),$$
	where $L/K(X)$ runs over all Galois extensions which are unramified with respect to $(X,M)$.
\end{definition}
If $(X,M)$ is a smooth pair corresponding to Darmon points, then \cite[Lemma 8.7.]{Moe24} and purity of the branch locus \cite[Tag 0BMB]{Stacks} together imply that the fundamental group $\pi_1(X,M)$ is the étale fundamental group of the corresponding root stack.

\subsection{The Brauer group of a pair}
In \cite{MiNaSt22}, Mitankin, Nakahara and Streeter introduced the Brauer group $\Br(X,D_{\bfm})$ for Campana pairs $(X,D_{\bfm})$ in their study of Darmon points. This Brauer group was then used by Chow, Loughran, Takloo-Bighash and Tanimoto in their prediction \cite[Conjecture 8.3]{CLTT24} for the leading constant in the analogue of Manin's conjecture for Campana points. This group is defined as
$$\Br(X, D_{\bfm})=\{b\in \Br(X)\mid m_i\partial_{D_i} (b)=0 \text{ for all }i\in \{1,\dots,n\} \text{ with }m_i<\infty\},$$
where $\partial_{D_i}$ is the Witt residue as defined in \cite[Definition 1.4.11(ii)]{CoSk21}. Note that this is the Brauer group of the corresponding root stack \cite[Remark 3.15]{MiNaSt22}.

We now introduce the Brauer group of a smooth pair $(X,M)$, which will generalize the Brauer groups of Campana pairs.
\begin{definition} \label{def: Brauer group pair}
	Let $(X,M)$ be a smooth pair over a field of characteristic $0$.
	For every $(\bfm,c)\in \mathbb{N}^n_{\cC}$, the \textit{residue at $\tilde{D}_{(\bfm,c)}$} $$\partial_{\bfm,c}\colon \Br K(X)\rightarrow H^1(K_{(\bfm/\gcd(\bfm),c)},\mathbb{Q}/\mathbb{Z})$$
	is $\partial_{\bfm,c}=\gcd(\bfm)\cdot \partial_{\bfm/\gcd(\bfm),c}$, where $K_{(\bfm/\gcd(\bfm),c)}$ is the residue field of the valuation $v_{(\bfm/\gcd(\bfm),c)}$ and $\partial_{\bfm/\gcd(\bfm),c}$ is the residue corresponding to the valuation $v_{\bfm/\gcd(\bfm),c}$, as defined in \cite[Definition 1.4.11(ii)]{CoSk21}.
\end{definition}

\begin{definition}
	Let $(X,M)$ be a smooth pair over a field $K$ of characteristic $0$. The \textit{Brauer group} of the pair $(X,M)$ is
	
	$$\Br(X,M)=\{b\in \Br(U)\mid \partial_{\bfm,c} (b)=0 \,\text{ for all } (\bfm,c)\in \Gamma_{M, \cC}\}$$
	and its \textit{algebraic Brauer group} is
	$$\Br_1(X,M)=\Br(X,M)\cap \Br_1 U,$$
	where $\Br_1 U:=\ker\left(\Br U\rightarrow \Br U_{\overline{K}}\right)$ is the algebraic Brauer group of $U$.
\end{definition}
\begin{remark}
	Note that when $(X,M)$ is the smooth pair corresponding to geometric Darmon points on a pair $(X,D_{\bfm})$, then
	$$\Br(X,M)=\Br(X,D_{\bfm}).$$
\end{remark}

\subsection{The algebraic Brauer group for pairs over toric varieties}
In this section we describe the algebraic Brauer group for pairs over toric varieties in terms of automorphic characters on the open torus, which we will use in Section \ref{section: powerful values norm forms} and in \cite{Moe25toric}.

Let $(X,M)$ be a smooth pair such that $X$ is a toric variety over a number field and such that the divisors $D_1,\dots,D_n$ are the torus-invariant prime divisors, defined over the splitting field $E/K$ of the dense torus. Note that such a pair is always divisorial, as any nonempty intersection of torus-invariant prime divisors is a toric variety itself and thus irreducible.
By using the correspondence between Brauer group elements on a torus and automorphic characters as in \cite{Lou18}, we will describe the algebraic Brauer group $\Br_1(X,M)$ in terms of characters if $X$ is a (rational) toric variety and the divisors are the torus-invariant divisors.

\begin{definition}
	An \textit{automorphic character} on a torus $T$ over a field $K$ is a continuous homomorphism $T(\mathbf{A}_K)/T(K)\rightarrow S^1$. We denote the set of automorphic characters of finite order on $T$ by $(T(\mathbf{A}_K)/T(K))^{\sim}$.
\end{definition}

Let $G=\gal(E/K)$ and let $N$ be the cocharacter lattice of the split torus $U_E\cong \mathbb{G}_{m,E}^d$, viewed as a $G$-module. Then $N^\vee=\Hom(U_E,\mathbb{G}_m)$, which maps to $\Div(X_E)$ by sending a rational function on $X_E$ to its divisor. 
The homomorphism $$N^\vee\rightarrow \Div(X_E)\xrightarrow{\pr^*_M} \Div(X_E,M)$$ of $G$-modules induces a homomorphism $T_{(X,M)}\rightarrow U$ of tori, where $T_{(X,M)}$ is the torus corresponding to the $G$-module $\oplus_{\bfm\in \Gamma_M} \mathbb{Z}(\tilde{D}_{\bfm})$. The torus $T_{(X,M)}$ splits up as $T_{(X,M)}=\bigoplus_{\bfm\in\Gamma_M/G} T_{\bfm}$, where $\bfm$ corresponds to the $G$-submodule of $\Div(X,M)$ generated by $\tilde{D}_{\bfm}$. Note that the torus $T_\bfm$ is isomorphic to the Weil restriction of $\mathbb{G}_m$ along the field extension $K_\bfm/K$, where $K_\bfm$ is the smallest field containing $K$ over which $\tilde{D}_\bfm$ is defined.

In particular, given an automorphic character $\chi\colon U(\mathbf{A}_K)\rightarrow S^1$, we obtain an automorphic character $\chi_\bfm\colon T_{\bfm}(\mathbf{A}_K)\rightarrow S^1$ for every $\bfm\in \mathbb{N}^n_{\red}/G$ (here we recall that $\mathbb{N}^n_{\red}$ is the set of all $\bfm$ such that $\cC_{\bfm}\neq \emptyset$). The following lemma generalizes \cite[Lemma 3.25]{ShSt24}.
\begin{lemma} \label{lemma: Brauer group toric pair}
	Let $X$ be a smooth toric variety over a number field $K$ with dense torus $U$. Let $(X,M)$ be a smooth pair such that the divisors $D_1,\dots,D_n$ are the geometric components of the boundary $X\setminus U$. Then
	$$\Br_1(X,M)/\B(U)\cong \Br K\times \{\chi\in (U(\mathbf{A}_K)/U(K))^{\sim}\mid \chi_{\bfm}=1\, \forall \bfm\in \Gamma_M/G\}.$$
	Here $\B(U)=\ker(\Br_1(U)\rightarrow \prod_{v\in \Omega_K}\Br_1(U_{K_v}))$, which is trivial when $X$ is rational. This isomorphism identifies the Brauer--Manin pairing with the pairing between $U(\mathbf{A}_K)$ and the corresponding automorphic characters.
\end{lemma}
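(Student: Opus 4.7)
The plan is to leverage Loughran's identification of algebraic Brauer elements on a torus with automorphic characters, and then translate the residue conditions defining $\Br_1(X,M)$ into conditions on the restricted characters on the subtori $T_\bfm$. As a first step, I would invoke the standard description for the torus $U$ itself (see e.g.\ \cite{Lou18}): using the Brauer-Manin pairing, there is a canonical isomorphism
\[
\Br_1(U)/\B(U)\;\cong\;\Br K\;\oplus\;\bigl(U(\mathbf{A}_K)/U(K)\bigr)^{\sim},
\]
where the $\Br K$ summand is split off via the identity section $\spec K \to U$, and the pairing between a Brauer element and an adelic point equals the evaluation of the associated character.

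Next, for every $\bfm\in \Gamma_M$, I would exhibit the explicit map $j_{\bfm}\colon T_{\bfm}\to U$ of $K$-tori arising by taking the dual of the embedding of character lattices $\mathbb{Z}(\tilde{D}_\bfm)\hookrightarrow N^{\vee}$ (and then Galois-descending across the splitting field $E$). Pullback along $j_{\bfm}$ gives a commutative diagram
\[
\begin{array}{ccc}
\Br_1(U)/\B(U) & \longrightarrow & \Br_1(T_\bfm)/\B(T_\bfm) \\
\downarrow & & \downarrow \\
\bigl(U(\mathbf{A}_K)/U(K)\bigr)^{\sim} & \longrightarrow & \bigl(T_\bfm(\mathbf{A}_K)/T_\bfm(K)\bigr)^{\sim}
\end{array}
\]
where the bottom map is $\chi\mapsto \chi_\bfm$. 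Then the technical heart of the argument is the identification of the residue homomorphism $\partial_{\bfm,c}$ with this pullback: I would show that an element $b\in \Br_1(U)$ satisfies $\partial_{\bfm,c}(b)=0$ for all strata $c$ above $\bfm$ in its Galois orbit if and only if $j_\bfm^*b\in \Br K\cdot \B(T_\bfm)$, equivalently if and only if its automorphic character restricts to the trivial character $\chi_\bfm=1$ on $T_\bfm$.

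This reduction to subtori is where I expect the main obstacle to lie. To carry it out, I would compute residues in local coordinates: by passing to an affine toric open subset $X'\subset X_E$ where $\tilde{D}_\bfm$ is principal (and all other relevant divisors avoided), the residue $\partial_{\bfm,c}$ is given by the standard tame-symbol / Witt residue for the valuation $v_{\bfm/\gcd(\bfm),c}$, and the normalization factor $\gcd(\bfm)$ in Definition~\ref{def: Brauer group pair} accounts for the index of the character lattice of $T_\bfm$ inside its saturation. Writing a general element of $\Br_1(U_E)/\Br E$ as a sum of cyclic algebras $(L/E,\chi,f)$ with $f\in E(U)^\times$, the residue is expressed in terms of $v_{\bfm/\gcd(\bfm),c}(f)$, which via the perfect Kummer-theoretic pairing between characters and cocharacters of $T_\bfm$ corresponds precisely to the evaluation of $\chi_\bfm$. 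Descending from $E$ back to $K$ using Galois invariance (noting that residues at divisors in a single $G$-orbit vanish simultaneously) produces the orbit-indexed condition $\chi_\bfm=1$ for $\bfm\in\Gamma_M/G$.

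Putting these pieces together gives the claimed isomorphism: the $\Br K$ factor is the image of the constant Brauer elements (which have trivial residues and trivial automorphic character), and the second factor is the subgroup of characters cut out by the residue vanishing conditions. Finally, the claim that $\B(U)$ is trivial when $X$ is rational follows from the Hasse principle for $\Br_1$ of rational varieties over number fields; this takes care of the exceptional coset and completes the identification.
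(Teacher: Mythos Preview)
Your overall strategy matches the paper's: use Loughran's identification $\Br_1(U)/\B(U)\cong \Br K\times (U(\mathbf{A}_K)/U(K))^\sim$, then show that $\partial_{\bfm}(b)=0$ is equivalent to $\chi_\bfm=1$. Two points deserve correction. First, a minor one: the map $T_\bfm\to U$ is dual to the homomorphism of character lattices $N^\vee\to\mathbb{Z}(\tilde D_\bfm)$ obtained from $\pr_M^*$ followed by projection, not to an embedding $\mathbb{Z}(\tilde D_\bfm)\hookrightarrow N^\vee$ as you write.

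The more serious issue is your proposed execution of the ``technical heart''. You suggest base-changing to the splitting field $E$ and writing a general element of $\Br_1(U_E)/\Br E$ as a sum of cyclic algebras. But $U_E$ is a split torus, so $\Pic U_{\overline K}=0$ and $\Br_1(U_E)/\Br E\cong H^1(E,\Pic U_{\overline K})=0$: every algebraic Brauer element on $U$ becomes constant over $E$, and all its residues there vanish. The residues $\partial_\bfm$ you need to control are taken over $K$ and are governed entirely by the Galois twist, so a cyclic-algebra computation over $E$ carries no information. This is a genuine gap, not just a missing detail.

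The paper avoids this via a toric trick you do not mention. For $\bfm$ with $\gcd(\bfm)=1$, one refines the fan of $X$ (equivariantly for $G$) so that it contains the $G$-orbit of the ray through $\bfm$; this yields a toric variety $X'$ over $K$ on which $v_{\bfm}$ is the honest divisorial valuation along a torus-invariant prime divisor $D$. Then \cite[Lemma~4.7]{Lou18} applies directly to $X'$ and gives, up to sign, the commutativity linking the pullback $\Br_e(U)\to\Br_e(T_\bfm)$ with the residue $\partial_\bfm\colon\Br_1(U)\to H^1(K_\bfm,\mathbb Q/\mathbb Z)$, which is exactly $\partial_\bfm(b)=0\Leftrightarrow\chi_\bfm=1$. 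General $\bfm$ is reduced to this case via $\chi_\bfm=\chi_{\bfm/\gcd(\bfm)}^{\gcd(\bfm)}$, matching the factor $\gcd(\bfm)$ in the definition of $\partial_\bfm$. The vanishing of $\B(U)$ for rational $X$ is simply quoted from \cite[Corollary~4.6]{Lou18}.
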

\begin{proof}
	For a torus $T$ over $K$ we write $\Br_e U=\{b\in \Br_1 U\mid 1_T^*b=0\}$, where $1_T\colon \spec K\rightarrow T$ is the identity element, so that we have $\Br_1 T\cong \Br K\times\Br_e T$. We similarly write $\Br_e(X,M)=\Br(X,M)\cap \Br_1 U$.
	
	For $\bfm\in \mathbb{N}^n_{\red}$ there is an isomorphism of tori $T_{\bfm}\rightarrow T_{\bfm/\gcd(\bfm)}$ given by mapping $\tilde{D}_{\bfm}$ to $\tilde{D}_{\bfm/\gcd(\bfm)}$, and under this identification we have $\chi_{\bfm}=\chi_{\bfm/\gcd(\bfm)}^{\gcd(\bfm)}$.
	For $\bfm\in \mathbb{N}^n_{\red}$ with $\gcd(\bfm)=1$ we can consider a smooth refinement of the fan of $X$ containing the ray $\mathbb{R}_{\geq 0}\phi(\bfm)$, giving a toric variety $X'$. If $D$ is the prime divisor on $X'$ corresponding to this ray, then the corresponding residue $\partial_{D}$ coincides with the residue $\partial_{\bfm}$. Similarly, the Galois module in $\Div(X'_{\overline{K}})$ generated by the geometric irreducible components of $D$ is naturally isomorphic to the $G$-module in $\Div(X_{\overline{K}},M)$ generated by the Galois conjugates of $\tilde{D}_{\bfm}$ and this isomorphism respects the pullback morphisms $\Div(X_{\overline{K}})\rightarrow \Div(X'_{\overline{K}})$ and $\Div(X_{\overline{K}})\rightarrow \Div(X_{\overline{K}},M)$.
	
	These observations combine with \cite[Lemma 4.7]{Lou18} to give a commutative diagram
	$$
	\begin{tikzcd}
		0 \arrow[r] & \B(U) \arrow[d] \arrow[r] & \Br_e(U) \arrow[r] \arrow[d]                                   & (U(\mathbf{A}_K)/U(K))^{\sim} \arrow[r] \arrow[d]                                & 0 \\
		& 0 \arrow[r]               & \bigoplus_{\bfm\in \Gamma_M/G} \Br_e(T_{\bfm}) \arrow[r, "\sim"] & \bigoplus_{\bfm\in \Gamma_M/G} (T_\bfm(\mathbf{A}_K)/T_\bfm(K))^{\sim} \arrow[r] & 0
	\end{tikzcd}$$
	with exact rows, as well as a diagram
	$$\begin{tikzcd}
		\Br_e(U) \arrow[d] \arrow[r]                 & \bigoplus_{\bfm\in \Gamma_M/G}\Br_e(T_\bfm) \arrow[d, "\sim"]                  \\
		\Br_1(U) \arrow[r, "\bigoplus\partial_\bfm"] & {\bigoplus_{\bfm\in \Gamma_M/G}H^1(K_{\bfm/\gcd(\bfm)},\mathbb{Q}/\mathbb{Z})},
	\end{tikzcd}$$
	which commutes up to sign. These diagrams together imply that for every $b\in\Br_e(U)$ and $\bfm\in \Gamma_M/G$, the residues $\partial_{\bfm}(b)$ vanishes if and only if the character $\chi_\bfm$ is trivial, where $\chi$ is the character on $U$ induced by $b$.
	
	Finally, the triviality of $\B(U)$ when $X$ is rational is proven in \cite[Corollary 4.6]{Lou18}.
\end{proof}
\section{The leading constant} \label{section: leading constant general}
In this section we give a description for the leading constant in Conjecture \ref{conj: generalized Manin conjecture} when the divisor class $L$ is adjoint rigid and the fundamental group $\pi(X_{\overline{K}},M)$ of the pair is abelian. Our expression for the leading constant will involve the Fujita invariant and the $b$-invariant, as well as analogues of the $\alpha$-constant and the Tamagawa constant for pairs. In Manin's conjecture, the complement of the support of the adjoint divisor is used to define several of these invariants.

The $\alpha$-constant and the Tamagawa constant in Manin's conjecture, as formulated in \cite{BaTs98}, are defined using the complement $X^\circ$ of the adjoint divisor. Our conjecture replaces the role of this open subvariety with a pair $(X^\circ,M^\circ)\subset (X,M)$.
\begin{definition}
	In the setup of Conjecture \ref{conj: generalized Manin conjecture} such that $L$ is adjoint rigid, let $A\in \Eff^1(X,M)$ be the unique effective representative of the adjoint divisor class $a((X,M),L)\pr^*_M L+K_{(X,M)}$. We define $X^\circ$ to be the open subvariety of $X$ given as the complement of the closed subset $\overline{\Supp(A)|_U}$ in $X$ and let $M^\circ$ be given such that $\Gamma_{M^\circ,\cC}$ is the set of all $(\bfm,c)\in \Gamma_{M,\cC}$ such that the coefficient of $D_{(\bfm,c)}$ in $A$ is $0$.
\end{definition}
Note that $a((X,M),L)=a((X^\circ,M^\circ),L)$ and $b(K,(X,M),L)=b(K,(X^\circ,M^\circ),L)=\rank \Pic(X^\circ,M^\circ)$ as $\pr^*_{M^\circ} L$ is a multiple of the canonical divisor class $K_{(X^\circ,M^\circ)}$.
\begin{definition} \label{def: alpha-constant}
	The \textit{$\alpha$-constant} of the pair $(X,M)$ with respect to $L$ is
	$$\alpha((X,M),L):=\frac{1}{\#\Pic(X,M^{\circ})_{\mathrm{torsion}}} \int_{\Lambda^\vee} e^{-\langle \pr_{M^\circ}^*(L),\bfx\rangle}\d \bfx,$$
	where $\Lambda^\vee\subset \Pic(X,M^\circ)^\vee_{\mathbb{R}}$ is the dual of the effective cone $\Lambda=\Eff^1(X,M^\circ)$ and the integral is taken with respect to the Lebesgue measure on $\Pic(X,M^\circ)^\vee_{\mathbb{R}}$, normalized by the lattice $\Pic(X,M^\circ)^\vee\subset \Pic(X,M^\circ)^\vee_{\mathbb{R}}$.
\end{definition}

We will define the Tamagawa constant using the \textit{space of adelic $S$-integral $\cM$-points}
$$(\cX,\cM)(\mathbf{A}_{\cO_S}):=\prod_{v\in \Omega_K\setminus S} (\cX,\cM)(\cO_v)\times \prod_{v\in S} (X,M)(K_v),$$
and the $\gal(\overline{K}/K)$-module $\Pic(X^\circ_{\overline{K}},M^\circ)$. Let $L(\Pic(X^\circ_{\overline{K}},M^\circ),s)$ be associated the Artin $L$-function with Euler product $L(\Pic(X^\circ_{\overline{K}},M^\circ),s)=\prod_{v\in \Omega_K} L_v(\Pic(X^\circ_{\overline{K}},M^\circ),s)$, where we set $L_v(\Pic(X^\circ_{\overline{K}},M^\circ),s)=1$ for every infinite place $v$. We write $$L^*(\Pic(X^\circ_{\overline{K}},M^\circ),1)=\lim_{s\rightarrow 1} (s-1)^{b(K,(X,M),L)} L(\Pic(X^\circ_{\overline{K}},M^\circ),s),$$
which is a positive real number, and we also write $\lambda_v=L_v(\Pic(X^\circ_{\overline{K}},M^\circ),1)$.
Fix an adelic metrisation $\cK_X$ of the canonical divisor class $K_X$.
We endow the space of adelic $S$-integral $\cM$-points with the measure
\begin{equation}
	\tau_{(X^\circ,M^\circ)}:= L^*(\Pic(X^\circ_{\overline{K}},M^\circ),1) \prod_{v\in \Omega_K} (\lambda_v^{-1} \tau_{X,v}),
\end{equation}
where $\tau_{X,v}$ is the measure on $X(K_v)$ as defined in \cite[\S 2.1.8]{CLTs10} induced by the chosen metrisation of $K_X$.
Note that the measure $\tau_{(X^\circ,M^\circ)}$ can be viewed as an analogue of the Tamagawa measure defined in \cite[Theorem 1.1]{CLTs10}.

As in \cite[Conjecture 8.3]{CLTT24}, we define the Tamagawa constant as a sum over Brauer classes in $\Br(X^\circ,M^\circ)/\Br K$.
\begin{definition} \label{def: Tamagawa constant}
	For each $b\in \Br(X^\circ,M^\circ)$, we set
	$$\hat{\tau}(b)=L^*(\Pic(X^\circ_{\overline{K}},M^\circ),1) \prod_{v\in \Omega_K} \lambda_v^{-1}\int_{x_v\in (\cX,\cM)(\cO_v)} \frac{e^{2\pi i \inv_v b(x_v)}\d \tau_{X,v}}{H_{v,a((X,M),L)\cL+\cK_X}(x_v)},$$
	where $\inv_v\colon \Br K_v\rightarrow \mathbb{Q}/\mathbb{Z}$ is the local invariant.
	The \textit{Tamagawa constant} is
	$$\tau(K,S,(\cX,\cM),\cL)=\sum_{b\in \Br_1(X^\circ,M^\circ)/\Br K} \hat{\tau}(b).$$
\end{definition}
Note that constant $\hat{\tau}(b)$ only depends on the class of $b$ in $\Br(X^\circ,M^\circ)/\Br K$, so the summands in the Tamagawa constant are well defined.

Similarly to the constant in \cite[Conjecture 8.3]{CLTT24}, it is not clear whether the sum defining the Tamagawa constant converges, and if it does, whether it converges absolutely. In the case that the sum does not converge absolutely, the sum $\sum_{b\in \Br(X^\circ,M^\circ)/\Br K}\hat{\tau}(b)$ should be interpreted as the limit $$\lim_{B\subset \Br(X^\circ,M^\circ)/\Br K}\sum_{b\in B}\hat{\tau}(b),$$ where the limit ranges over all finite subgroups $B$ of $\Br(X^\circ,M^\circ)/\Br K$.

\begin{remark} \label{remark: adelic integral U}
	Note that the adelic integral $\hat{\tau}(b)$ can alternatively be written as
	\begin{equation}
		\hat{\tau}(b)=\int_{x\in U(\mathbf{A}_K)\cap (\cX,\cM)(\mathbf{A}_{\cO_S})} \frac{e^{2\pi i \ev_b(x)}\d \tau_{(X^\circ,M^\circ)}}{H_{a((X,M),L)\cL+\cK_X}(x)},
	\end{equation}
	where $\ev_b(x)=\sum_{v\in \Omega_K} \inv_v(b(x))$ is the Brauer--Manin pairing of $x$ and $b$.
	For a finite subgroup $B\subset \Br(U)$, we write $U(\mathbf{A}_K)^B$ for the set of adelic points on $U$ which pair trivially with all elements in $B$.
	Using this pairing together with character orthogonality, we can give an alternative description of the Tamagawa constant:
	\begin{align*}
		\tau(K,S,(\cX,\cM),\cL)=& L^*(\Pic(X^\circ_{\overline{K}},M^\circ),1)\times\\ & \lim_{B\subset \Br_1(X^\circ,M^\circ)/\Br K} \#B \int_{x\in U(\mathbf{A}_K)^B \cap (\cX,\cM)(\mathbf{A}_{\cO_S})} \frac{\d \tau_{(X^\circ,M^\circ)}}{H_{a((X,M),L)\cL+\cK_X}(x)}.
	\end{align*}
\end{remark}
\begin{comment}
	\begin{remark}
		As we will show in \textbf{refer to upcoming preprint factorisation stacks}, there is a well defined Brauer-Manin pairing $$(X,M)(\mathbf{A}_K)\times\Br(X,M)/\Br K\rightarrow \mathbb{Q}/\mathbb{Z}$$ given by sending $(x,b)$ to $\sum_{v\in \Omega_K} \inv_v b(x_v)$. Here $(X,M)(\mathbf{A}_K)$ is the space of adelic $M$-points, which is defined as the restricted product
		$$(X,M)(\mathbf{A}_K)= \prod_{v\in \Omega_K}\left((X,M)(K_v), (\cX,\cM)(\cO_v) \right).$$
		Analogously to Remark \ref{remark: adelic integral U}, we write $(X,M)(\mathbf{A}_K)^B$ for the set of points which pair trivially with all elements in $B$.
		Using this pairing together with character orthogonality, we can give an alternative description of the Tamagawa constant:
		\begin{align*}
			\tau(K,S,(\cX,\cM),\cL)=& L^*(\Pic(X^\circ_{\overline{K}},M^\circ),1)\times\\ & \lim_{B\subset \Br_1(X^\circ,M^\circ)/\Br K} \#B \int_{x\in (X^\circ,M^\circ)(\mathbf{A}_K)^B \cap (\cX,\cM)(\mathbf{A}_{\cO_S})} \frac{\d \tau_{(X^\circ,M^\circ)}}{H_{a((X,M),L)\cL+\cK_X}(x)},
		\end{align*}
		where the limit is over all finite subgroups $B$ of $\Br_1(X^\circ,M^\circ)/\Br K.$
	\end{remark}
\end{comment}
\begin{definition}
	The constant in Conjecture \ref{conj: generalized Manin conjecture} is defined to be
	\begin{equation} \label{equation: leading constant}
		c(K,S,(\cX,\cM),\cL)=\frac{\alpha((X,M),L)\tau(K,S,(\cX,\cM),\cL)}{a((X,M),L)(b(K,(X,M),L)-1)!}
	\end{equation}
\end{definition}

\begin{remark} \label{remark: alpha constant}
	In \cite{Pey95}, Peyre used a variant of the $\alpha$-constant, which is used in various articles on Manin's conjecture such as the work on toric varieties by Salberger, Pieropan and Schindler \cite{Sal98,PiSc23}. Rather than taking an exponential integral over the dual of the effective cone, this constant is instead defined as the volume of a slice of this cone. We define a generalization of Peyre's $\alpha$-constant, which we use in \cite{Moe25toric} to prove Conjecture \ref{conj: generalized Manin conjecture} for toric pairs. 
	
	Recall the cone $\Lambda=\Eff^1(X,M^\circ)$, and let $\Lambda^\vee_1\subset \Lambda^\vee$ be the collection of all linear functions in $\Pic(X^\circ,M^\circ)^\vee$ which evaluate to $1$ at the class $L$ and let $(\Lambda^\vee)^\circ$ be the interior of $\Lambda^\vee$. Then $ \mathbb{R}_{>0}\times \Lambda^\vee_1\rightarrow (\Lambda^\vee)^\circ$ given by $(c,f)\mapsto cf$ is an analytic isomorphism. We endow $\Lambda^\vee_1$ with the unique measure $\mu$ such that the measure on $\mathbb{R}_{>0}\times E$ corresponds to the measure on $\Lambda^\vee$ under this isomorphism, where we take the measure on $\mathbb{R}_{>0}$ to be the standard Lebesgue measure.
	If we write
	$$\alpha_{\mathrm{Peyre}}((X,M),L)= \frac{\Volume(\Lambda^\vee_1)}{\#\Pic(X,M^{\circ})_{\mathrm{torsion}}},$$
	then a general result on cones \cite[Proposition 2.4.4]{BaTs95} implies
	$$\alpha((X,M),L)=a((X,M),L)(b(\mathbb{Q},(X,M),L)-1)!\alpha_{\mathrm{Peyre}}((X,M),L).$$
\end{remark}

Using this alternate $\alpha$-constant, the constant in the conjecture is equal to
\begin{equation}
	c(K,S,(\cX,\cM),\cL)=\alpha_{\mathrm{Peyre}}((X,M),L)\tau(K,S,(\cX,\cM),\cL).
\end{equation}

\section{Compatibility with previous results} \label{section: Compatibility}
In this section we verify that Conjecture \ref{conj: generalized Manin conjecture} is compatible with various results in the literature.
\subsection{Compatibility with conjecture for Campana points}
First we show that Conjecture \ref{conj: generalized Manin conjecture} agrees with the conjecture \cite[Conjecture 1.1]{PSTVA21} by Pieropan, Smeets, Tanimoto and Várilly-Alvarado with the modified prediction for the leading constant given by Chow, Loughran, Takloo-Bighash and Tanimoto \cite[Conjecture 8.3]{CLTT24}.

Let $(X,D_{\bfm})$ be a Campana pair where $X$ is a smooth projective Fano variety over a number field such that the support of $D_{\bfm}$ is a strict normal crossings divisor, and let $(X,M)$ be the smooth split pair corresponding to the Campana points on this Campana pair. Let $S$ be a finite set of places and let $\cX$ be a regular integral model of $X$ over $\cO_S$. Let $(\cX,\cM)$ be the induced integral model of $(X,M)$ and let $(\cX,\cD_{\bfm})$ be the corresponding Campana pair.
By Proposition \ref{prop: Fujita invariant quasi-Campana} and Proposition \ref{prop: b-invariant quasi-Campana} it follows that $a((X,M),L)=a((X,D_{\bfm}),L)$ and $b(K,(X,M),L)=b(K,(X,D_{\bfm}),L)$ for any big and nef divisor class $L$, where the right hand sides are defined as in \cite[Conjecture 1.1]{PSTVA21}. Therefore Conjecture \ref{conj: generalized Manin conjecture} agrees with \cite[Conjecture 1.1]{PSTVA21}, up to a differing prediction for the leading constant.
Assume as in \cite{CLTT24} that the support of $A=a((X,M),L)L+K_X+D_{\bfm}$ is contained in the support of $D_{\bfm}$. Then $(X^\circ,M^\circ)=(X,M^\circ)$ is the pair corresponding to Darmon points on $(X\setminus \Supp(A), D_{\bfm}|_{X\setminus \Supp(A)})$, where $\Supp(A)$ is the support of $A$. In particular, it follows that $\Br_1(X,M^\circ)=\Br_1((X,D_{\bfm}),L)$, where the right hand side is defined as in \cite[Definition 8.1]{CLTT24}. Furthermore, a direct calculation of the $\alpha$-constants show that $\alpha((X,M),L)=\alpha((X,D_{\bfm}),L)$, where the right hand side is defined as in \cite[\S 3.3]{PSTVA21}. These equalities together combine imply that our prediction of the leading constant agrees with the one given in \cite[Conjecture 8.3]{CLTT24}.
\subsection{Darmon points on vector group compactifications}
In \cite[Theorem 1.2]{Ito25}, Ito proves Conjecture \ref{conj: generalized Manin conjecture} for Darmon points on vector group compactifications, where the divisors are chosen to be in the complement of the vector group, and $L$ is assumed to be adjoint rigid with respect to the pair. Although the expression for the leading constant given in his theorem does not precisely match with our description of the leading constant, a direct analogue of \cite[Lemma 9.3]{PSTVA21} shows that they are the same.

\subsection{Darmon points and generalized Fermat equations}
In \cite{Ara25}, Arango-Piñeros counted Darmon points of bounded height on projective space, motivated by the study of primitive solutions of generalized Fermat equations
$$Ax^a+By^b+Cz^c=0,$$
for positive integers $a,b,c$ satisfying
$$\tfrac{1}{a}+\tfrac{1}{b}+\tfrac{1}{c}>1.$$
The Campana pair that he studied is $(\mathbb{P}^1_{\mathbb{Z}},(1-\tfrac{1}{a})D_1+(1-\tfrac{1}{b})D_2+(1-\tfrac{1}{c})D_3),$
where $D_1, D_2, D_3$ are the points $(0:1),(1:1),(1:0)$ and the integers $a,b,c$ are as above. The $\mathbb{Z}$-Darmon points on this pair are the primitive tuples $(x:y)$ such that $|x|$ is an $a$-th power, $|x-y|$ is a $b$-th power and $|y|$ is a $c$-th power.
By \cite[Theorem 1.2]{Ara25}, the number of these points up to Weil height $B$ tends to
$$CB^{1-1/a-1/b-1/c},$$
for an explicit constant $C>0$.
This result can be viewed as a special case of Conjecture \ref{conj: generalized Manin conjecture}, as the log-canonical divisor is $$-K_{(\mathbb{P}^1_{\mathbb{Q}},D_{\bfm})}=-K_{\mathbb{P}^1_{\mathbb{Q}}}-\left(1-\tfrac{1}{a}\right)[D_1]-\left(1-\tfrac{1}{a}\right)[D_2]-\left(1-\tfrac{1}{c}\right)[D_3]=\left(1-\tfrac{1}{a}-\tfrac{1}{b}-\tfrac{1}{c}\right)[D_1],$$
so the Fujita invariant with respect to $[D_1]$ is $1-\tfrac{1}{a}-\tfrac{1}{b}-\tfrac{1}{c}$ while the $b$-invariant is $1$. Thus his result agrees with Conjecture \ref{conj: generalized Manin conjecture}, up to possibly having a different leading constant.

\subsection{Geometric Campana and geometric Darmon points on Toric varieties} \label{section: geometric Campana toric}
In \cite{ShSt24}, Shute and Streeter consider geometric Campana points and geometric Darmon points for Campana pairs $(X,D_{\bfm})$, where $X$ is a toric variety and $D_{\bfm}$ is torus-invariant. For such pairs, they prove an analogue of Manin's conjecture on the number of geometric Campana points and geometric Darmon points of bounded log-anticanonical height. The invariants appearing in their asymptotic are direct analogues of those appearing in the conjecture \cite[Conjecture 8.3]{CLTT24} for Campana points. In particular, their result \cite[Theorem 1.1]{ShSt24} agrees with the prediction given in Conjecture \ref{conj: generalized Manin conjecture}.

\subsection{Powerful values of norm forms} \label{section: powerful values norm forms}
Now we will show that Conjecture \ref{conj: generalized Manin conjecture} agrees with the results by Streeter \cite[Theorem 1.1]{Str21} on powerful values of norm forms. The other main result from that paper, \cite[Theorem 1.4]{Str21}, is also compatible with Conjecture \ref{conj: generalized Manin conjecture}, as shown in \cite[\S 8.5.2.]{CLTT24}.

Let $E/K$ be a Galois extension of number fields of degree $n$, with Galois group $G$, let $\omega$ be a $K$-basis of $E$ over $K$, and let $m$ be a positive integer. We write $N_\omega$ for the corresponding norm form and we let $Z(N_{\omega})\subset X:=\mathbb{P}^{n-1}_K$ be its zero locus.

Under the assumption that $n$ is prime or $\gcd(n,m)=1$, Streeter obtained an asymptotic for the number of weak Campana points over $\cO_S$ of bounded anticanonical height on the Campana pair $\left(X, \left(1-\frac{1}{m}\right) Z(N_\omega)\right)$ with respect to the model $\cX=\mathbb{P}^{n-1}_{\cO_S}$, for any finite set of places $S$ containing an explicit set $S(\omega)\subset \Omega_K$. Here the adelic metrization $\cK_X$ of $K_X$ used to define the height is the toric metric on the anticanonical divisor class as in \cite[Theorem 2.1.6]{BaTs95}, where we view $X$ as a toric variety using the anisotropic torus $X\setminus Z(N_{\omega})$.

Let $S\subset \Omega_K$ be a finite set of places containing the infinite places as well as the places which ramify in $E/K$. Furthermore, we let $(\mathbb{P}^{n-1}_K,M)$ be the smooth arithmetic pair for the weak Campana points on the Campana pair $\left(\mathbb{P}^n_K,\left(1-\frac{1}{m}\right) Z(N_\omega)\right)$, and take $(\mathbb{P}^N_{\cO_S},\cM)$ to be its natural integral model.

For any integer $m\in \mathbb{Z}$, we compute the asymptotic formula predicted by Conjecture \ref{conj: generalized Manin conjecture} and we show that it agrees with \cite[Theorem 1.1]{Str21} if $n$ is prime or $\gcd(n,m)=1$.
\begin{theorem} \label{Theorem: conjecture compatible with norm forms}
	Conjecture \ref{conj: generalized Manin conjecture} predicts that there exists a thin set $Z$ such that
	\begin{equation}
		N((\cX,\cM)(\cO_S)\setminus Z,\cK_X,B)\sim c B^{\frac{1}{m}} (\log B)^{b(K,(X,M),-K_X)-1},
	\end{equation}
	as $B\rightarrow \infty$, where
	$$b(K,(X,M),K_X)=\#(\{\bfw\in \mathbb{N}^n\mid \min(\bfw)=0, \, \sum_{i=1}^n w_i=m\}/G)$$
	and $G=\gal(E/K)$.
	The leading constant is given as
	$$c=\frac{L^*(\Pic(X_{\overline{K}},M^\circ),1)}{(b(K,(X,M),-K_X)-1)!n m^{b(K,(X,M),-K_X)-1} } \sum_{b\in \Br_1(X,M^\circ)/\Br K} \hat{\tau}(b),
	$$
	and
	$$\hat{\tau}(b)=\int_{x\in U(\mathbf{A}_K)\cap(\cX,\cM)(\mathbf{A}_{\cO_S})} e^{2\pi i \ev_b(x)}H_{-\cK_X}(x)^{1-\frac{1}{m}}\d \tau_{(X,M^\circ)}.$$ %Here, the integral $\hat{\tau}(b)$ vanishes for all but finitely many $b$.
	
	Moreover, this prediction (with $Z=\emptyset$) agrees with \cite[Theorem 1.1]{Str21} if the assumptions of that theorem are satisfied.
\end{theorem}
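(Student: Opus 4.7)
The plan is to split the argument into (i) deriving the prediction of Conjecture \ref{conj: generalized Manin conjecture} in the stated setup and (ii) matching it to \cite[Theorem 1.1]{Str21}. For (i), note first that the geometric components of $Z(N_{\omega})$ are $n$ hyperplanes $D_1,\dots,D_n$ in general position in $\mathbb{P}^{n-1}_E$, so Remark \ref{remark: equivalent to smooth pair Campana} lets us treat $(X,M)$ as a smooth pair with splitting field $E$. Proposition \ref{prop: Fujita invariant quasi-Campana} applied with $[Z(N_\omega)]=nH$ and $K_X=-nH$ gives $K_X+D_{\bfm} = -(n/m)H$ and hence $a((X,M),-K_X)=1/m$. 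The adjoint divisor $A = a(-K_X)+K_X+D_{\bfm}$ therefore vanishes, so $(X^\circ,M^\circ)=(X,M)$ and $b(K,(X,D_{\bfm}),-K_X) = \rank\Pic(X) = 1$. Proposition \ref{prop: b-invariant quasi-Campana} and Remark \ref{remark: description I_C} identify $I_{M,\cC}$ with Galois orbits of pairs $(\bfw,c)$ satisfying $\sum_i w_i=m$ and $\bfw\neq m\bfe_i$; since the $D_i$ are in general position, $\bigcap_{w_i>0}D_i$ is nonempty and irreducible exactly when $\min(\bfw)=0$, and adjoining the single Galois orbit $\{m\bfe_1,\dots,m\bfe_n\}$ recovers the asserted formula for $b(K,(X,M),-K_X)$.

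For the leading constant, I would evaluate $\alpha((X,M),-K_X)$ directly from Definition \ref{def: alpha-constant}. A presentation of $\Pic(X,M)$ over $E$ by generators $[\tilde{D}_{\bfw}]$ with relations $\sum_\bfw(w_i-w_j)[\tilde{D}_{\bfw}]=0$ (descended Galois-equivariantly), together with the identity $\pr^*_M(-K_X) = \sum_{\bfw}(nw_i/m)[\tilde{D}_{\bfw}]$, reduces the exponential integral over the dual effective cone to a combinatorial volume computation yielding $\alpha((X,M),-K_X)=1/(nm^b)$. Substituting this and $a=1/m$ into formula (\ref{equation: leading constant}), and rewriting each summand $\hat{\tau}(b)$ as an adelic integral over $U(\mathbf{A}_K)$ via Remark \ref{remark: adelic integral U}, produces exactly the displayed expression for $c$.

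For (ii), matching the exponents $B^{1/m}$ and $(\log B)^{b-1}$ with Streeter's is a direct combinatorial check between his description of the $b$-invariant and ours. To match the leading constants I would invoke Lemma \ref{lemma: Brauer group toric pair} applied to the anisotropic torus $U = X\setminus Z(N_\omega)$: since $\mathbb{P}^{n-1}$ is rational, $\B(U)=0$, and $\Br_1(X,M)/\Br K$ is identified with the group of automorphic characters of $U(\mathbf{A}_K)/U(K)$ which are trivial on each subtorus $T_\bfm$ for $\bfm\in\Gamma_M/G$. Streeter expresses his constant as a sum over characters with the same vanishing conditions (arising from character orthogonality enforcing the weak-Campana constraints); under the identification of Lemma \ref{lemma: Brauer group toric pair}, the Brauer sum in our formula matches his character sum term-by-term. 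Finally, the toric metrization of $-K_X$ makes the local Tamagawa measures $\tau_{X,v}$ coincide with Streeter's local weights, so each local integral $\int_{(\cX,\cM)(\cO_v)}e^{2\pi i\inv_v b(x_v)}\,d\tau_{X,v}/H_v$ becomes his local density, and the convergence factors $\lambda_v$ assemble with $L^*$ into his global $L$-factors.

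The main obstacle is this final, place-by-place comparison of the local integrals. While the Brauer-group identification via Lemma \ref{lemma: Brauer group toric pair} is clean, converting each Brauer-adelic integral $\hat{\tau}(b)$ into Streeter's explicit local density is delicate: it requires careful normalization of the toric metric on each fiber, handling of the Archimedean factors (which involve a fundamental domain for the $\cO_S$-unit action on the norm-one torus), and separate treatment of primes in $S$ and of primes ramifying in $E/K$. Once these local identifications are in hand, multiplicativity over places converts our adelic product into Streeter's Euler product and completes the comparison, exactly under his hypothesis that $n$ is prime or $\gcd(n,m)=1$ (which ensures the thin exceptional set $Z$ can be taken empty).
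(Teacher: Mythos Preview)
Your derivation of the conjectural prediction in part (i) is essentially the paper's computation, but there is one slip: the adjoint divisor $A$ lives in $\Div(X,M)_{\mathbb{Q}}$, not $\Div(X)_{\mathbb{Q}}$, and it does \emph{not} vanish. One has
\[
A = \sum_{(\bfw,c)\in\Gamma_{M,\cC}}\Bigl(\tfrac{1}{m}\textstyle\sum_i w_i - 1\Bigr)\tilde{D}_{\bfw,c},
\]
so $X^\circ = X$ but $\Gamma_{M^\circ}$ consists only of those $\bfw$ with $\sum_i w_i = m$, i.e.\ $M^\circ \subsetneq M$. This matters because the $\alpha$-constant and the Tamagawa constant are defined using $\Pic(X,M^\circ)$, not $\Pic(X,M)$; the value $\alpha = 1/(nm^b)$ you quote is correct only for the former group.

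For part (ii), you have misidentified where the work lies. The paper does \emph{no} place-by-place comparison of local integrals: a single citation to \cite[Proposition~3.4.4]{BaTs95} converts $\tau_X$ into Streeter's measure $\mu$ globally (contributing the factor $\operatorname{Res}_{s=1}\zeta_K(s)/\operatorname{Res}_{s=1}\zeta_E(s)$), after which the two adelic integrals are literally the same. The genuine obstacle---which your phrase ``the same vanishing conditions'' glosses over---is that the character group $B\cong\Br_1(X,M^\circ)/\Br K$ furnished by Lemma~\ref{lemma: Brauer group toric pair} is \emph{not} a priori equal to Streeter's set $\cU_0$ of Galois-invariant $m$-torsion characters. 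The paper establishes this by a Galois-module computation: for each $\bfw$ one writes the composite $\mathbb{Z}[D_1,\dots,D_n]\to V_{\bfw}\to\mathbb{Z}^{G}$ as an additive endomorphism $A_{\bfw}$, uses $A_{m\bfe_1}$ to force $m$-torsion and $A_{(m-1)\bfe_1+\sigma(\bfe_1)}$ to force Galois invariance when $n>2$, and then shows $B=B_0$ exactly when $n$ is prime (and $B$ trivial when $\gcd(n,m)=1$). Finally one invokes \cite[Lemma~5.3]{Str21} to discard the characters in $B\setminus\cU_0$. This is where the hypothesis ``$n$ prime or $\gcd(n,m)=1$'' actually enters; it has nothing to do with the thin set $Z$.
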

\begin{proof}
	As before, we write $X= \mathbb{P}^{n-1}_K$.
	We start by computing the Fujita invariant and the $b$-invariant of the arithmetic pair $(\mathbb{P}^{n-1}_K,M)$. We first note that the divisor $Z(N_{\omega})$ has degree $n$ and hence represents the anticanonical divisor class $-K_X$, which implies that the log-anticanonical divisor class is represented by $-\frac{1}{m}K_X$, which in turn implies that the Fujita invariant is given by $a((X,M),-K_X)=\frac{1}{m}$. The pair $(X,M^\circ)$ satisfies $$\Gamma_{M^\circ}=\{\bfw\in \mathbb{N}^n\mid \min(\bfw)=0, \, \sum_{i=1}^n w_i=m\},$$ so $$b(K,(X,M),K_X)=\#(\{\bfw\in \mathbb{N}^n\mid \min(\bfw)=0, \, \sum_{i=1}^n w_i=m\}/G),$$ where the action of $G$ on $\mathbb{N}^n$ is induced by its action on $\{D_1,\dots,D_n\}$.
	
	Since the divisor $Z(N_{\omega})$ has degree $n$, the group $\left(\bigoplus_{\bfm\in \Gamma_{M^\circ}} \mathbb{Z}[D_\bfm]\right)^G\subset \Pic(X,M^\circ)$ is a subgroup of index $n$. As $\pr^*_{M^\circ} -K_X=-mK_{(X,M^\circ)}$ and $-K_{(X,M^\circ)}=\sum_{\bfw\in \Gamma_{M^\circ}}D_{\bfw}$, it follows that
	$$\alpha((X,M),-K_X)=\frac{1}{n m^{b(K,(X,M),-K_X)}}.$$
	
	Thus Conjecture \ref{conj: generalized Manin conjecture} implies that there exists a thin set $Z$ such that
	\begin{equation}
		N((\cX,\cM)(\cO_S)\setminus Z,\cK_X,B)\sim C(K,S,(\cX,\cM),\cK_X) B^{\frac{1}{m}} (\log B)^{b(K,(X,M),-K_X)-1},
	\end{equation}
	as $B\rightarrow \infty$, where
	$$
	c(K,S,(\cX,\cM),\cK_X)=\frac{L^*(\Pic(X_{\overline{K}},M^\circ),1)}{(b(K,(X,M),-K_X)-1)!n m^{b(K,(X,M),-K_X)-1} } \sum_{b\in \Br_1(X,M^\circ)/\Br K} \hat{\tau}(b),
	$$
	and
	$$\hat{\tau}(b)=\int_{x\in U(\mathbf{A}_K)\cap(\cX,\cM)(\mathbf{A}_{\cO_S})} e^{2\pi i \ev_b(x)}H_{-\cK_X}(x)^{1-\frac{1}{m}}\d \tau_{(X,M^\circ)}.$$
	Note that we used the description for  the $\hat{\tau}(b)$ from Remark \ref{remark: adelic integral U} here.% (\textbf{Mention somewhere that it seems likely that we can take $Z=\emptyset$.})
	
	%Using Lemma \ref{lemma: Brauer group toric pair}
	%We claim that the Hecke characters for $E$ corresponding to the elements $\Br_1(X,M^\circ)/\Br K$ are the Hecke characters for $E$ which are $m$-torsion and
	%Under this identification, we need to have $\chi_{m\bfe_1}=1$ for every $\chi\in \Br_1(X,M^\circ)/\Br K$, so every element in $\Br_1(X,M^\circ)/\Br K$ has order dividing $m$. %Since the metrized line bundle $\cK_X$ agrees with the chosen metrisation in \cite{Str22} outside a finite set of places, \cite[Lemma 5.3]{Str22} and \cite[Lemma 5.20]{Str22} together imply that $\hat{\tau}(b)=0$ for all but finitely many $b\in \Br_1(X,M^\circ)/\Br K$.
	
	Now we will relate this with the results obtained by Streeter. Let $\cK_X$ be the toric metrization of the canonical divisor as in \cite[Definition 4.5.]{Str21}. Under the additional assumption that $n$ and $m$ are coprime or $n$ is prime, \cite[Theorem 1.1]{Str21} states that 
	$$N((\cX,\cM)(\cO_S),\cK_X,B)\sim \widetilde{C} B^{\frac{1}{m}} (\log B)^{b(n,m)-1}$$
	for every finite set of places $S$ containing an explicit set $S(\omega)$,
	where $\widetilde{C}$ is an explicit positive constant and $$b(n,m)=\frac{1}{d}\left(\binom{n+m-1}{n-1}- \binom{m-1}{n-1}\right).$$ We will show that this agrees with our prediction.
	
	\textit{The b-invariant}:
	We first compare the exponent on the $\log B$ factor. In Streeter's work \cite[Proposition 5.8]{Str21}, he considers the set $G^m/S_m$, where $S_m$ acts by permuting the coordinates. The Galois group $G$ acts on $G^m/S_m$ by right multiplication of every element of a representative $m$-tuple. We choose an ordering on $G$.
	We have an isomorphism of $G$-sets $$G^m/S_m\cong \left\{\bfw\in \mathbb{N}^n\mid \sum_{i=1}^n w_i=m\right\}$$ given by sending $(g_1,\dots,g_m)$ to $(w_1,\dots, w_n)$, where $w_i$ is the number of times the $i$-th element of $G$ appears in the tuple.
	
	Under this isomorphism, the $G$-subset $\Gamma_{M^\circ}$ corresponds to the $G$-subset of $G^m/S_m$ given by all $m$-tuples not containing some element of $G$.
	As in \cite[Proposition 5.11]{Str21}, write $S'(G,m)\subset (G^m/S_m)/G$ for the quotient of this set by $G$. By the isomorphism, we see that $b(K,(X,M),-K_X)=\# S'(G,m)$. If we assume that $\gcd(m,n)=1$ or $n$ is prime as in \cite[Theorem 1.1]{Str21}, then \cite[Remark 5.12]{Str21} gives $b(K,(X,M),-K_X)=b(n,m)$. In particular, we have shown that the exponents on $B$ and $\log B$ in \cite[Theorem 1.1]{Str21} agree with our expectation.
	
	\textit{The leading constant}:
	It remains to verify the equality $\widetilde{c}=c(K,S,(\cX,\cM),\cK_X)$, where $\widetilde{c}$ is the constant given by Streeter.
	The equality $$\lim_{s\rightarrow \frac{1}{m}} \left(s-\frac{1}{m}\right)^{b(n,m)}L(\Pic(X_{\overline{K}},M^\circ),ms)=L^*(\Pic(X_{\overline{K}},M^\circ),1)\cdot \frac{1}{m^{b(n,m)}}$$
	implies that $c(K,S,(\cX,\cM),\cK_X)$ is the sum of the adelic integrals
	$$\frac{m}{(b(n,m)-1)!n}\lim_{s\rightarrow \frac{1}{m}} \left(s-\frac{1}{m}\right)^{b(n,m)}\int_{x\in U(\mathbf{A}_K)\cap(\cX,\cM)(\mathbf{A}_{\cO_S})} \frac{e^{2\pi i \ev_b(x)}\d \tau_{X}}{H_{-\cK_X}(x)^{s-1}}$$
	for all $b\in \Br_1(X,M^\circ)/\Br(K)$, where $\tau_X$ is the Tamagawa measure on $X$ determined by $\cK_X$. By \cite[Proposition 3.4.4.]{BaTs95}, this is equal to
	$$\frac{m\operatorname{Res}_{s=1}\zeta_K(s)}{(b(n,m)-1)!n\operatorname{Res}_{s=1}\zeta_E(s)}\lim_{s\rightarrow \frac{1}{m}} \left(s-\frac{1}{m}\right)^{b(n,m)}\int_{x\in U(\mathbf{A}_K)\cap(\cX,\cM)(\mathbf{A}_{\cO_S})} \frac{e^{2\pi i \ev_b(x)}\d \mu}{H_{-\cK_X}(x)^{s}}.$$
	Note that the factor $\frac{\operatorname{Res}_{s=1}\zeta_K(s)}{\operatorname{Res}_{s=1}\zeta_E(s)}$ appears here due to how the measure $\mu$ is normalized in \cite[\S 3.2]{Str21}. In order to prove that our constant $c$ agrees with the constant $\tilde{c}$, it only remains to relate our sum over Brauer elements with the sum over Hecke characters in \cite{Str21}.
	
	\textit{Hecke characters and the Brauer group:} 
	As $X$ is a rational toric variety with torus $U$, Lemma \ref{lemma: Brauer group toric pair} gives us an isomorphism $\Br_1(X,M^\circ)/\Br K$ with a subgroup $B$ of $(U(\mathbf{A}_K)/U(K))^\sim$.
	If we write $H$ for the set of automorphic characters on $U$, then
	$$\Br_1(X,M^\circ)/\Br K\cong B:=\{\chi\in H\mid \chi_\bfw=1\, \forall \bfw\in \Gamma_{M^\circ}/G\}.$$
	In particular, we see that
	$$c=\frac{m\operatorname{Res}_{s=1}\zeta_K(s)}{(b(n,m)-1)!n\operatorname{Res}_{s=1}\zeta_E(s)}\lim_{s\rightarrow \frac{1}{m}} \left(s-\frac{1}{m}\right)^{b(n,m)}\sum_{\chi\in B}\int_{x\in U(\mathbf{A}_K)\cap(\cX,\cM)(\mathbf{A}_{\cO_S})} \frac{\chi(x)\d \mu}{H_{-\cK_X}(x)^{s}}.$$
	This expression coincides with the expression given for $\tilde{c}$, except that the set of characters summed over in $\tilde{c}$ can a priori be different.
	
	The torus $U$ is given as $U=R_{E/K} \mathbb{G}_m/\mathbb{G}_m$, where $R_{E/K} \mathbb{G}_m$ is the Weil restriction. Thus, an automorphic character on $U$ can be viewed as an automorphic character on $R_{E/K} \mathbb{G}_m$ whose restriction to $\mathbb{G}_m$ is trivial. In other words, we can view them as Hecke characters for $E$ whose restriction to $\mathbf{A}^\times_K$ is trivial. The group $\gal(E/K)$ acts on the set of these Hecke characters by $\chi\mapsto \chi^\sigma$ for $\sigma\in \gal(E/K)$ as in \cite[Definition 2.23]{Str21}, where $\chi^\sigma(x)=\chi(\sigma(x))$ for all $x\in \mathbf{A}^\times_E$.
	We claim that $B=H[m]$ if $n=2$ and otherwise 
	$$B\subset B_0:=\{\chi\in H[m]\mid \chi^\sigma=\chi \quad\forall \sigma\in \gal(E/K)\},$$
	which we will now prove.
	%Lemma \ref{lemma: Brauer group toric pair} implies that
	%$$\Br_1(X,M^\circ)/\Br K\cong \{\chi\in H\mid \chi_\bfw=1 \forall \bfw\in \Gamma_{M^\circ}/G\}.$$
	
	For $\bfw\in \Gamma_{M^\circ}$, let $V_{\bfw}\subset \Div(X_{\overline{K}},M)$ be the Galois submodule generated by $\tilde{D}_{\bfw}$, so that its corresponding torus is $T_{\bfw}$. The natural homomorphism $T_{\bfw}\rightarrow U$ appearing in Lemma \ref{lemma: Brauer group toric pair} factors as $T_{\bfw}\rightarrow R_{E/K}\mathbb{G}_m\rightarrow U$, where the latter homomorphism is the projection and the former corresponds to the map $\mathbb{Z}[D_1,\dots,D_n]\rightarrow V_{\bfw}$. Let $V_{\bfw}\rightarrow \mathbb{Z}^{\gal(E/K)}$ be the Galois-equivariant homomorphism given by sending $\tilde{D}_{\bfw}$ to $$\sum_{\substack{\sigma\in \gal(E/K) \\ \sigma(\tilde{D}_\bfw)=\tilde{D}_\bfw}}\sigma.$$
	Using the isomorphism $\mathbb{Z}^{\gal(E/K)}\cong\mathbb{Z}[D_1,\dots,D_n]$ given by $\sigma\mapsto \sigma(D_1)$, we can view the composition $\mathbb{Z}[D_1,\dots,D_n]\rightarrow V_{\bfw}\rightarrow \mathbb{Z}^{\gal(E/K)}$ as an endomorphism $A_{\bfw}$ of $\mathbb{Z}[D_1,\dots,D_n]$. Since $A_{\bfw}$ sends $D_i$ to $\sum_{\sigma\in \gal(E/K)}w_{\sigma(i)}D_{\sigma(i)}$, we have $A_{\bfw_1+\bfw_2}=A_{\bfw_1}+A_{\bfw_2}$ for every $\bfw_1,\bfw_2\in \mathbb{N}^n$.
	From this we directly see that $\chi_{m\bfe_1}=1$ is equivalent to $\chi$ being $m$-torsion, as $V_{\bfe_1}\rightarrow \mathbb{Z}^{\gal(E/K)}$ is an isomorphism. Consequently we see that every element in $\Br_1(X,M^\circ)/\Br K$ is $m$-torsion, and furthermore $B=H[m]$ if $n=2$.
	
	Now we assume $n>2$. For every $\sigma\in \gal(E/K)$, the endomorphism $A_{\sigma(\bfe_1)}$ corresponds to the action of $\sigma$ on $R_{E/K}\mathbb{G}_m$, so the relations $\chi_{(m-1)\bfe_1+\sigma(\bfe_1)}=1$ and $\chi_{m\bfe_1}=1$ together imply that the characters in $\Br_1(X,M^\circ)/\Br K$ satisfy $\chi^\sigma=\chi$ if $n>2$. So we have the desired inclusion.
	
	Since the map $x\mapsto\prod_{\sigma\in \gal(E/K)}\sigma(x)$ is the norm map $\mathbf{A}_E\rightarrow \mathbf{A}_K$, it follows that $\prod_{\sigma\in \gal(E/K)}\chi^\sigma=1$ for all $\chi\in B_0$. As $\chi$ is Galois-invariant, it follows that $\chi$ is $n$-torsion. In particular, if $n$ and $m$ are coprime, then $B=B_0=1$ so $c=\tilde{c}$ as desired.
	
	If $n>2$ is prime, then $\gal(E/K)$ acts faithfully on the Galois modules $V_{\bfw}$ for $\bfw\in \Gamma_{M^\circ}$ so the map $V_{\bfw}\rightarrow \mathbb{Z}^{\gal(E/K)}$ is an isomorphism. This implies that a character $\chi$ on $U$ satisfies $\chi_{\bfw}=1$ if and only if the character $\chi^\bfw$ obtained by the map $R_{E/K} \mathbb{G}_m\rightarrow U$ satisfies $\chi^\bfw=1$. This implies $B=B_0$.
	If $n=2$ or $n>2$ then $B$ includes the set of characters $\cU_0$ in \cite[Definition 5.19]{Str21}, which are the characters in $B$ which are invariant under the compact subgroup $\cK$ as in \cite[Definition 4.17]{Str21}. Now \cite[Lemma 5.3]{Str21} implies that the only characters for which the integral does not vanish lie in $\cU_0$, which shows that $c=\tilde{c}$.
	%With these assumptions, Streeter proved \cite[Theorem 1.1]{Str22} an asymptotic
	%$$N((\cX,\cM)(\cO_S),\cK_X,B)=\widetilde{C}B^{\frac{1}{m}} (\log B)^{b(n,m)-1},$$
	%for every finite set of places $S$ containing an explicit set $S(\omega)$.
	%Here $$b(n,m)=\frac{1}{d}\left(\binom{n+m-1}{n-1}- \binom{m-1}{n-1}\right)$$
	%and $\widetilde{C}$ is an explicit constant given as
	%$$\widetilde{C}=\frac{m\operatorname{Res}_{s=1}\zeta_K(s)}{(b(n,m)-1)!n\operatorname{Res}_{s=1}\zeta_E(s)}\lim_{s\rightarrow \frac{1}{m}} \left(s-\frac{1}{m}\right)^{b(n,m)}\sum_{\chi\in B} \widetilde{\tau}(\chi,s),$$
	%where $$\widetilde{\tau}(\chi,s)=\int_{x\in U(\mathbf{A}_K)\cap(\cX,\cM)(\mathbf{A}_{\cO_S})} \frac{\chi(x)\d \mu}{H_{-\cK_X}(x)^s},$$
	%where $\mu$ is the Haar measure on $U(\mathbf{A}_K)$. Here $B$ is the set of automorphic characters on the torus $U$ which are $m$-torsion, Galois invariant and trivial on the compact subgroup $\cK\subset U(\mathbf{A}_K)$ as in \cite[Definition 4.17]{Str22}. The 

\end{proof}
%\nocite{*}
\printbibliography
%\printindex
\end{document}